\newtheorem{theorem}{Theorem}
\newtheorem{definition}{Definition}
\newtheorem{proposition}{Proposition}
\newtheorem{lemma}{Lemma}
\newtheorem{example}{Example}
\newenvironment{proof}{{\textbf{Proof.}}\,}{\hfill \qed\\}
\newtheorem{remark}{Remark}
\newtheorem{corollary}{Corollary}
\begin{document}
\begin{frontmatter}

\title{Solvability of Multistage Pseudomonotone Stochastic Variational Inequalities\tnoteref{mytitlenote}}
\tnotetext[mytitlenote]{This work was supported by the National Natural Science
Foundation of China (Grant No. 11771244, 12171271).}

%
\author[1]{Xingbang Cui}
\ead{cxb18@mails.tsinghua.edu.cn}

\address[1]{Department of Mathematical Sciences, Tsinghua University, Beijing 100084, China}

\author[2,3]{Jie Sun}
\ead{jie.sun@curtin.edu.au}
\address[2]{School of Business, National University of Singapore, Singapore 119245}
\address[3]{School of EECMS, Curtin University, Perth WA6845, Australia}

\author[1]{Liping Zhang\corref{mycorrespondingauthor}}
\cortext[mycorrespondingauthor]{Corresponding author}
\ead{lipingzhang@mail.tsinghua.edu.cn}


\begin{abstract}
This paper focuses on the solvability of multistage pseudomonotone stochastic variational inequalities (SVIs). On one hand, some known solvability results of pseudomonotone deterministic variational inequalities cannot be directly extended to pseudomonotone SVIs, so we construct the isomorphism between both and then establish theoretical  results on the existence, convexity, boundedness and compactness of the solution set for pseudomonotone SVIs via such an isomorphism. On the other hand, the progressive hedging algorithm (PHA) is an important and effective method for solving monotone SVIs, but it cannot be directly used to solve nonmonotone SVIs. We propose some sufficient conditions on the elicitability of pseudomonotone SVIs, which opens the door for applying Rockafellar's elicited PHA to solve pseudomonotone SVIs. Numerical results on solving a pseudomonotone two-stage stochastic market optimization problem and randomly generated two-stage pseudomonotone linear complementarity problems are presented to show the efficiency of the elicited PHA for solving pseudomonotone SVIs.
\end{abstract}

\begin{keyword}
Pseudomonotonicity \sep multistage stochastic optimization \sep stochastic variational inequalities \sep progressive hedging algorithm \sep elicited monotonicity
\MSC[2020] 90C15 \sep 90C33 \sep 90C30 \sep 65K15
\end{keyword}

\end{frontmatter}

\section{Introduction}


Recently, Rockafellar and Wets \cite{Rockafellar2017} developed the multistage stochastic variational inequality (SVI) model, which may incorperate  recourse decisions and  stagewise disclosure of information. The model  provides a unified framework for describing the optimal conditions of  multistage stochastic optimization problems and stochastic equilibrium problems. Rockafellar and Sun \cite{Rockafellar2019,Rockafellar2020} proposed  progressive hedging algorithms (PHAs) for solving monotone multistage SVI problems and monotone Lagrangian multistage SVI problems. In particular, their algorithms converge linearly in the linear-quadratic setting.  Besides, Chen et al \cite{Chen2017} formulated the two-stage SVI as a two-stage stochastic program with recourse by  the expected residual minimization  procedure, and solved this stochastic program via the sample average approximation (SAA). Pang et al \cite{Pang} systematically studied the theory and the best response method for a  two-stage nonlinear stochastic game model. Some related work to SVIs, multistage stochastic equilibrium problems and PHAs can be found in, e.g., \cite{Chen2005,Chen2009,Fang2007,Gurkan1999,Iusem2017,Iusem2018,Iusem2019,Jiang2008,King1993,Ling2008,Luo2009,Lu2013,Ravat2011,Xu2010,Zhang2009,ZHSY}.

As indicated in \cite{Kannan2019,Rockafellar2019}, the aforementioned literatures on PHAs are mostly limited to the monotone mappings since the PHA is essentially a variety of the proximal point algorithm (PPA) for monotone operators \cite{r1976}.  In practice, however, nonmonotone SVIs such as pseudomonotone SVIs arise frequently \cite{Kannan2019} in various applications such like the competitive exchange economy problem, the stochastic fractional problem and the stochastic product pricing problem. The investigations about solution methods for single-stage pseudomonotone SVIs can be found in \cite{Iusem2017,Kannan2019,Yousefian2018}. A natural question is what about the solvability of the multistage pseudomonotone SVI.

In this paper, we investigate  the solvability of the multistage pseudomonotone SVI including solution basic theory and solution method. On one hand, some remarkable theoretical results on pseudomonotone deterministic VIs can be found in \cite{Facchinei2003}. However, these known results cannot be directly extended to the multistage pseudomonotone SVI. So, we design an isomorphism between the pseudomonotone SVI and  the pseudomonotone  deterministic VI. Via such an isomorphism, we obtain some properties of its solution set including the nonemptiness, compactness, boundedness and convexity. On the other hand, the PHA is a very effective method for solving monotone SVIs, but it cannot be directly used to solve the multistage pseudomonotone SVI. Fortunately, Rockafellar recently introduced the notion of elicitable monotonicity (see definition in Section 2) \cite{Rockafellar2019} in an attempt to extend the  PPA and its varieties from monotone mappings to certain nonmonotone mappings. Motivated by his work, we devote to investigate the  elicitability of the multistage pseudomonotone SVIs for the purpose of applying the PHA to solve the multistage pseudomonotone SVI, which opens the door for applying Rockafellar's elicited PHA \cite{Rockafellar2019} to solve pseudomonotone SVIs.
 As long as the monotonicity of the multistage pseudomonotone  SVI can be elicited, the elicited PHA in \cite{SZ2021,Zhang2019}, which is a specialization of Rockafellar's progressive decoupling algorithm \cite{Rockafellar2019}, can be applied to solve  the multistage pseudomonotone  SVI. Our numerical results indicate that such PHA works pretty well for linear pseudomonotone complementarity problems of ordinary size (i.e., several hundreds of variables and scenarios).

The main contribution of this paper is summarized as follows.
\begin{itemize}
\item[1.] The isomorphism between the pseudomonotone SVI and  the pseudomonotone deterministic VI is presented, which stands as a stepping stone in studying the solvability of pseudomonotone SVIs.
\item[2.] Some properties of the solution set of pseudomonotone SVIs, such as the existence, compactness, boundedness and convexity of solutions, are established.
\item[3.] Some criteria to identify  the elicitability of the pseudomonotone SVIs are provided, which ensure Rockafellar's elicited  PHA can be applied to solve the pseudomonotone SVIs.
\end{itemize}

The rest of this paper is organized as follows. We present the formulation  and some basic notions of the multistage SVI in Section $\ref{pre}$. The isomorphism between the pseudomonotone SVI and  the pseudomonotone deterministic VI is introduced in Section $\ref{iso}$. We derive some results of the solution sets for the pseudomonotone multistage SVI in Section $\ref{exist}$. Some criteria for the elicitability of a pseudomonotone SVI are deduced in  Section $\ref{pha}$. We demonstrate the effectiveness of the elicited PHA by various numerical experiments in Section $\ref{nume}$. The paper is concluded in Section $\ref{con}$.

{\bf Notations:} For any positive integer $m$, $\mathbb{R}^m$ denotes the $m$-dimensional Euclidean space and $\mathcal{L}_m$ denotes the the $m$-dimensional Hilbert space, with $\mathbb{R}^m_+=\{x\in\mathbb{R}^m:~x\ge0\}$. Let $\mathbb{R}^{m\times m}$ denote the set of real matrices of $m$ rows and $m$ columns. For $Q\in\mathbb{R}^{m\times m}$, tr$(Q)$ denotes the trace of $Q$. Given sets $S,U$, int$S$ denotes the interior of $S$, bd$S$ denotes the boundary of $S$, ri$(S)$ denotes the relative interior of $S$, and $S\backslash U$ is the set $\{x:x\in S,\ x\notin U\}$. We use $S_\infty=\{d: x+\tau d\in S,~\forall x\in S,\, \tau\geq0\}$ to designate the recession cone of $S$ and use $S^*=\{d:\left<v,d\right>\geq0,~\forall v\in S\}$ to designate the dual cone of $S$. We also use int$(S_{\infty})^*$ to stand  for int$((S_{\infty})^*)$.

\section{Preliminaries}\label{pre}
\subsection{Formulation of the multistage SVI}
Consider an N-stage sequence
\begin{equation*}
x_1,\xi_1,x_2,\xi_2,\ldots,x_N,\xi_N,
\end{equation*}
where $x_k\in\mathbb{R}^{n_k}$ is the decision vector at the $k$-th stage and $\xi_k\in\Xi_k$ is a random vector with $\Xi_k$ being its support and  becoming known only after $x_k$ is determined. Let $\xi=(\xi_1,\xi_2,\ldots,\xi_N)$ be the random vector defined on the finite sample space $\Xi=\Xi_1\times\Xi_2\times\cdots\times\Xi_N$, where each realization of $\xi$ has a probability $p(\xi)>0$, and these probabilities add up to $1$.

Throughout this paper, define $n=\sum\limits_{k=1}^{N}n_k$ and let $\mathcal{L}_n$ denote the Hilbert space consisting of the mapping from $\Xi$ to $\mathbb{R}^{n}$
\begin{equation*}
x(\cdot):\xi\mapsto x(\xi)=(x_1(\xi)^T,x_2(\xi)^T,\ldots,x_N(\xi)^T)^T,
\end{equation*}
where $x_k(\xi)^T$ denotes the transpose of $x_k(\xi)$ for $k=1,2,\ldots,N$. The inner product of $\mathcal{L}_n$ is defined as
\begin{equation}\label{eqinner}
\langle x(\cdot),w(\cdot)\rangle=\sum\limits_{\xi\in\Xi}p(\xi)\sum\limits_{k=1}^{N}\left<x_k(\xi),w_k(\xi)\right>,\quad x(\cdot),w(\cdot)\in\mathcal{L}_n,
\end{equation}
where $\left<x_k(\xi),w_k(\xi)\right>$ is the Euclidean inner product of $x_k(\xi)$ and $w_k(\xi)$. Further, by restricting mapping $x(\cdot)\in\mathcal{L}_n$ to the following closed subspace
\begin{equation*}
\mathcal{N}_n=\{x(\cdot):x_k(\xi)\ \text{does not depend on}\ \xi_k,\ldots,\xi_N\},
\end{equation*}
we introduce the nonanticipativity constraint on $\mathcal{L}_n$. That is, $x(\cdot)\in\mathcal{N}_n$ means that $x_k(\xi)$ will be influenced by $\xi_1,\ldots,\xi_{k-1}$, but not $\xi_k,\ldots,\xi_N$. The orthogonal complement of $\mathcal{N}_n$ is denoted by
\begin{equation} \label{defmn}
\mathcal{M}_n=\{w(\cdot)\in\mathcal{L}_n:E_{\xi|\xi_1,\ldots,\xi_{k-1}}w_k(\xi)=0,\ k=1,\ldots,N\},
\end{equation}
where $E_{\xi|\xi_1,\ldots,\xi_{k-1}}w_k(\xi)$ is the conditional expectation given   $(\xi_1,\ldots,\xi_{k-1})$. {Moreover, $w(\cdot)$ corresponds to the nonanticipativity multiplier in \cite{Rockafellar2017}, which is understood in stochastic programming as furnishing the shadow price of information. As we will see later, $w(\cdot)$ enables decomposition into a separate problem for each secnario $\xi$.} In addition, the closed convex set $\mathcal{C}\subset\mathcal{L}_n$ is denoted by
\begin{equation} \label{defc}
\mathcal{C}=\{x(\cdot):x(\xi)\in C(\xi),\ \forall \xi\in\Xi\},
\end{equation}
where $C(\xi)$ is a nonempty closed convex set in $\mathbb{R}^n$. The mapping $\mathcal{F}:\mathcal{L}_n\rightarrow\mathcal{L}_n$ is defined as
\begin{equation*}
\mathcal{F}(x(\cdot)):\xi\mapsto F(x(\xi),\xi),\ \forall\xi\in\Xi,
\end{equation*}
where $F(\cdot,\xi)$ is a function from $\mathbb{R}^n$ to $\mathbb{R}^n$. 
The $N$-stage SVI in basic form can be expressed as to find $x(\cdot)$ such that
\begin{equation}\label{eq1}
-\mathcal{F}(x(\cdot))\in N_{\mathcal{C}\cap\mathcal{N}_n}(x(\cdot)),
\end{equation}
where $N_{\mathcal{C}\cap\mathcal{N}_n}(x(\cdot))$ is the normal cone of $\mathcal{C}\cap\mathcal{N}_n$ at $x(\cdot)$. In other words, $$N_{\mathcal{C}\cap\mathcal{N}_n}(x(\cdot))=\{v(\cdot):\left<v(\cdot),y(\cdot)-x(\cdot)\right>\le 0,\ \forall y(\cdot)\in\mathcal{C}\cap\mathcal{N}_n\}.$$

For  convenience, we use SVI($\mathcal{C}\cap\mathcal{N}_n$,$\mathcal{F}$) to denote (\ref{eq1}) and use SOL($\mathcal{C}\cap\mathcal{N}_n$,$\mathcal{F}$) to denote the corresponding solution set. The extensive form of SVI (\ref{eq1}) can be formulated as to find $x(\cdot)\in{\cal N}_n$ and  $w(\cdot)\in{\cal M}_n$ such that
\begin{equation}\label{eq2}
-F(x(\xi),\xi)-w(\xi)\in N_{C(\xi)}(x(\xi)),\quad \forall \xi\in\Xi.
\end{equation}
The following theorem given in \cite{Rockafellar2017} exhibits the relationship between (\ref{eq1}) and (\ref{eq2}).
\begin{theorem}\label{thm1}
 If $x(\cdot)$ solves (\ref{eq2}), then $x(\cdot)$ solves (\ref{eq1}). Conversely, assume that $x(\cdot)$ solves (\ref{eq1}), then $x(\cdot)$ is sure to solve (\ref{eq2}) if there exists some $\hat{x}(\cdot)\in\mathcal{N}_n$ such that $\hat{x}(\xi)\in riC(\xi)$ for all $\xi\in\Xi$ (This is called {\bf  the constraint qualification}).
\end{theorem}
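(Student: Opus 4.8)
The plan is to establish the two implications separately. The direction ``(\ref{eq2}) $\Rightarrow$ (\ref{eq1})'' is a direct averaging argument that requires no qualification. Suppose $x(\cdot)\in\mathcal{N}_n$ and $w(\cdot)\in\mathcal{M}_n$ satisfy (\ref{eq2}); then $x(\xi)\in C(\xi)$ for each $\xi$, since the normal cone $N_{C(\xi)}(x(\xi))$ is nonempty only at points of $C(\xi)$, so $x(\cdot)\in\mathcal{C}\cap\mathcal{N}_n$. Pick any $y(\cdot)\in\mathcal{C}\cap\mathcal{N}_n$. Testing the scenariowise inclusion against $y(\xi)\in C(\xi)$ gives $\langle -F(x(\xi),\xi)-w(\xi),\,y(\xi)-x(\xi)\rangle\le 0$ for every $\xi$; multiplying by $p(\xi)>0$ and summing, the definition (\ref{eqinner}) of the inner product yields $\langle -\mathcal{F}(x(\cdot))-w(\cdot),\,y(\cdot)-x(\cdot)\rangle\le 0$. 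Since $x(\cdot),y(\cdot)\in\mathcal{N}_n$ and $w(\cdot)\in\mathcal{M}_n=\mathcal{N}_n^{\perp}$, the cross term $\langle w(\cdot),\,y(\cdot)-x(\cdot)\rangle$ vanishes, leaving $\langle -\mathcal{F}(x(\cdot)),\,y(\cdot)-x(\cdot)\rangle\le 0$ for all $y(\cdot)\in\mathcal{C}\cap\mathcal{N}_n$, which is exactly (\ref{eq1}).

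For the converse, assume $x(\cdot)$ solves (\ref{eq1}), so $x(\cdot)\in\mathcal{C}\cap\mathcal{N}_n$ and $-\mathcal{F}(x(\cdot))\in N_{\mathcal{C}\cap\mathcal{N}_n}(x(\cdot))$. The key is a normal-cone decomposition. Since $\Xi$ is finite, $\mathcal{L}_n$ is finite-dimensional and $\mathcal{C}$ is the product set $\{x(\cdot):x(\xi)\in C(\xi)\ \forall\xi\}$; testing the defining inequality of $N_{\mathcal{C}}(x(\cdot))$ against elements $y(\cdot)$ whose components $y(\xi)\in C(\xi)$ can vary independently, and using $p(\xi)>0$, gives $N_{\mathcal{C}}(x(\cdot))=\{v(\cdot):v(\xi)\in N_{C(\xi)}(x(\xi))\ \forall\xi\}$. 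Also $N_{\mathcal{N}_n}(x(\cdot))=\mathcal{N}_n^{\perp}=\mathcal{M}_n$ because $\mathcal{N}_n$ is a subspace. The relative-interior intersection rule for normal cones from convex analysis then yields $N_{\mathcal{C}\cap\mathcal{N}_n}(x(\cdot))=N_{\mathcal{C}}(x(\cdot))+N_{\mathcal{N}_n}(x(\cdot))$ provided $\mathrm{ri}\,\mathcal{C}\cap\mathrm{ri}\,\mathcal{N}_n\ne\emptyset$; and since $\mathrm{ri}\,\mathcal{N}_n=\mathcal{N}_n$ and $\mathrm{ri}\,\mathcal{C}=\{x(\cdot):x(\xi)\in\mathrm{ri}\,C(\xi)\ \forall\xi\}$ (again using the product structure and finiteness of $\Xi$), this nonemptiness is precisely the stated constraint qualification.

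Granting the decomposition, I would write $-\mathcal{F}(x(\cdot))=v(\cdot)+w(\cdot)$ with $v(\cdot)\in N_{\mathcal{C}}(x(\cdot))$ and $w(\cdot)\in\mathcal{M}_n$; then for each $\xi$ one has $-F(x(\xi),\xi)-w(\xi)=v(\xi)\in N_{C(\xi)}(x(\xi))$, which together with $x(\cdot)\in\mathcal{N}_n$ and $w(\cdot)\in\mathcal{M}_n$ is exactly (\ref{eq2}). I expect the main effort to lie in the converse direction, specifically in the relative-interior normal-cone intersection rule together with the observation that $\mathrm{ri}\,\mathcal{C}$ decomposes scenariowise; both hinge on $\Xi$ being finite so that one works in a finite-dimensional Hilbert space, whereas the averaging argument and the identifications of $N_{\mathcal{C}}$ and $N_{\mathcal{N}_n}$ are elementary.
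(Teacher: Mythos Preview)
Your argument is correct in both directions: the forward implication via scenariowise averaging and orthogonality of $\mathcal{M}_n$ to $\mathcal{N}_n$, and the converse via the relative-interior normal-cone sum rule $N_{\mathcal{C}\cap\mathcal{N}_n}=N_{\mathcal{C}}+N_{\mathcal{N}_n}$ together with the product decompositions of $N_{\mathcal{C}}$ and $\mathrm{ri}\,\mathcal{C}$ afforded by the finiteness of $\Xi$.

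Note, however, that the paper does not actually supply its own proof of this theorem: it simply quotes the result from Rockafellar and Wets \cite{Rockafellar2017} without argument. Your proof is the standard one and is essentially the same as the derivation in that reference, so there is nothing to contrast at the level of strategy. One minor presentational point: in the last paragraph you describe the normal-cone intersection rule and the scenariowise description of $\mathrm{ri}\,\mathcal{C}$ as the ``main effort,'' but both are classical facts (the former from convex analysis in finite dimensions, the latter from the product structure), so the proof is in fact entirely routine once these tools are invoked.
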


\subsection{Pseudomonotonicity}
We recall the concept of pseudomonotone mapping and its properties.
\begin{definition}\label{def1}
$(a)$ A mapping $F:K\subset \mathbb{R}^{m}\to \mathbb{R}^{m}$ is said to be pseudomonotone if for all $x,y\in K$,
\begin{equation*}
\left<y-x,F(x)\right>\geq0\Rightarrow\left<y-x,F(y)\right>\geq0.
\end{equation*}

$(b)$ Let $\mathcal{K}=\{x(\cdot)|x(\xi)\in K(\xi),\ \forall \xi\in\Xi\}$ be a closed convex set in $\mathcal{L}_n$, where $K(\xi)$ is a closed convex set in $\mathbb{R}^{n}$ for all $\xi\in\Xi$. Then a mapping $\mathcal{F}: \mathcal{K}\subset\mathcal{L}_n \to \mathcal{L}_n$ is said to be
 pseudomonotone on $\mathcal{K}$ if for all $x(\cdot),y(\cdot)\in\mathcal{K}$,
\begin{equation*}
\left<y(\cdot)-x(\cdot),\mathcal{F}(x(\cdot))\right>\geq0\Rightarrow\left<y(\cdot)-x(\cdot),\mathcal{F}(y(\cdot))\right>\geq0.
\end{equation*}
\end{definition}

\begin{remark}\label{rem1}
Obviously, a monotone mapping $F$ must be pseudomonotone while the converse is not necessarily true. The counterexample is $F(x)=1/x$ and $K=[1,2]$.
\end{remark}

Some properties of pseudomonotone functions were given in \cite{Cambini2009}.
\begin{lemma}\label{lem4}
 Let $F$ be a continuously differentiable function defined on an open convex set $S\subset \mathbb{R}^m$. Assume that $F$ is pseudomonotone on $S$, then the following statements hold.

$(a)$ Let $DF(x)$ denote the Jacobian of $F$ at $x$.
\begin{equation}\label{eqlem4}
x\in S, u\in \mathbb{R}^{m}, \left<u,F(x)\right>=0\Rightarrow\left<u,DF(x)u\right>\geq0.
\end{equation}
 Further, if $F(x)\neq0$ for all $x\in S$, then the statement (\ref{eqlem4}) is also sufficient.

$(b)$ Given $x\in S$, the Jacobian $DF(x)$ has at most one negative eigenvalue, where an eigenvalue with multiplicity $k(k>1)$ is counted as $k$ eigenvalues.
\end{lemma}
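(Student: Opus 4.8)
The plan is to prove $(a)$ and $(b)$ by elementary calculus, since these are classical facts from generalized‑convexity theory collected in \cite{Cambini2009}. For the necessity in $(a)$ I would argue along rays. Fix $x\in S$ and $u\in\mathbb{R}^m$ with $\langle u,F(x)\rangle=0$. Since $S$ is open, $x+tu\in S$ for all small $t>0$, and $\langle (x+tu)-x,F(x)\rangle=t\langle u,F(x)\rangle=0\ge 0$; pseudomonotonicity (Definition \ref{def1}$(a)$) then gives $\langle(x+tu)-x,F(x+tu)\rangle\ge0$, i.e.\ $\langle u,F(x+tu)\rangle\ge0$ for all small $t>0$. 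A first‑order Taylor expansion, using only $F\in C^1$, yields $\langle u,F(x+tu)\rangle=t\langle u,DF(x)u\rangle+o(t)$; dividing by $t$ and letting $t\downarrow0$ gives $\langle u,DF(x)u\rangle\ge0$, which is $(\ref{eqlem4})$.

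For the sufficiency in $(a)$ under the hypothesis $F(x)\neq0$ on $S$ I would argue by contradiction along segments. Suppose $(\ref{eqlem4})$ holds, $F$ never vanishes, and there are $x,y\in S$ with $\langle y-x,F(x)\rangle\ge0$ but $\langle y-x,F(y)\rangle<0$. Put $\phi(t)=\langle y-x,F(x+t(y-x))\rangle$ on $[0,1]$ (the segment lies in $S$ by convexity); then $\phi\in C^1$, $\phi(0)\ge0>\phi(1)$, and $t^\ast:=\sup\{t\in[0,1]:\phi(t)\ge0\}$ satisfies $\phi(t^\ast)=0$, $t^\ast<1$, and $\phi<0$ on $(t^\ast,1]$. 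At $z^\ast=x+t^\ast(y-x)$ one has $\langle y-x,F(z^\ast)\rangle=0$, so $(\ref{eqlem4})$ with $u=y-x$ gives $\phi'(t^\ast)=\langle y-x,DF(z^\ast)(y-x)\rangle\ge0$, while $\phi(t^\ast)=0$ together with $\phi<0$ immediately to the right forces $\phi'(t^\ast)\le0$; hence $\phi'(t^\ast)=0$. This degenerate configuration is where the real work lies and is the main obstacle: one must exploit $F(z^\ast)\neq0$ — which makes the hyperplanes $\{F(z)\}^{\perp}$ well defined and continuously varying near $z^\ast$ — together with the validity of $(\ref{eqlem4})$ inside those hyperplanes at all nearby points, to show that $\phi$ cannot in fact become negative; a continuity/compactness argument along the segment closes the gap and produces the contradiction.

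Finally, $(b)$ is a quick consequence of $(a)$. Fix $x\in S$ and consider the quadratic form $u\mapsto\langle u,DF(x)u\rangle$, whose values, and hence whose count of negative eigenvalues, are those of the symmetric part $A=\tfrac12(DF(x)+DF(x)^T)$. If $F(x)=0$, then $(\ref{eqlem4})$ makes $A$ positive semidefinite, so $A$ has no negative eigenvalue. If $F(x)\neq0$, then $(\ref{eqlem4})$ gives $\langle u,Au\rangle\ge0$ for all $u$ in the hyperplane $H=\{u:\langle u,F(x)\rangle=0\}$, which has dimension $m-1$; by the Courant–Fischer min–max theorem the second smallest eigenvalue of $A$ satisfies $\lambda_2(A)\ge\inf\{\langle u,Au\rangle:u\in H,\ \|u\|=1\}\ge0$, so at most one eigenvalue is negative, the stated multiplicity convention being precisely what the interlacing count requires.
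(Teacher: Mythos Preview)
The paper does not prove this lemma; it is quoted as a known result from \cite{Cambini2009}, so there is no in-paper argument to compare against and your attempt has to be assessed on its own.

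Your proof of the necessity in $(a)$ is the standard one-line Taylor argument and is correct. Your proof of $(b)$ via Courant--Fischer on the symmetric part $A=\tfrac12(DF(x)+DF(x)^T)$ is also correct, and is in fact more direct than the route the paper itself takes later in Lemma~\ref{lem7}, where the symmetric case is obtained by passing to a potential $\theta$ with $\nabla\theta=F$ and invoking second-order pseudoconvexity results from \cite{Cambini2009}.

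The sufficiency half of $(a)$, however, has a genuine gap. You correctly isolate the obstruction: at the last zero $t^\ast$ of $\phi$ before it becomes negative one gets $\phi(t^\ast)=0$ and, combining $(\ref{eqlem4})$ with $\phi<0$ immediately to the right, $\phi'(t^\ast)=0$. But your closing sentence --- ``a continuity/compactness argument along the segment closes the gap'' --- is not an argument. Condition $(\ref{eqlem4})$ furnishes information only at points where $\langle y-x,F(\cdot)\rangle$ vanishes, and at $t^\ast$ that information has already been spent (it gave $\phi'(t^\ast)\ge0$); nothing you have written excludes $\phi$ having a degenerate local maximum at $t^\ast$ and then turning negative. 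Resolving this is exactly where the hypothesis $F\neq0$ does nontrivial work --- in the generalized-monotonicity literature one perturbs the direction $u$ along $F(z^\ast)$ so that the perturbed $\phi$ is strictly positive at the new starting point, forcing a first zero with strictly negative derivative and contradicting $(\ref{eqlem4})$ --- and that step is absent from your sketch. As written, the sufficiency is unproved.
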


In order to give some criteria for elicitable monotonicity, we recall some basic results about matrices which were given in \cite{Roger2013}.

\begin{lemma}\label{lem5}
 Let $A,B\in \mathbb{R}^{m\times m}$ be symmetric matrices. Then $AB=BA$ if and only if there exists  orthogonal matrix $Q$ such that $Q^{-1}AQ$ and $Q^{-1}BQ$ are diagonal matrices, where  $Q^{-1}$ is the inverse of $Q$.
\end{lemma}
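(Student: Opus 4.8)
The plan is to prove the two implications separately; the substantive one is that two commuting symmetric matrices can be simultaneously diagonalized by a single orthogonal matrix.

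For the ``if'' direction I would argue in one line: if $Q^{-1}AQ=D_A$ and $Q^{-1}BQ=D_B$ are diagonal, then $D_AD_B=D_BD_A$ because diagonal matrices always commute, and conjugating back by $Q$ gives $AB=QD_AD_BQ^{-1}=QD_BD_AQ^{-1}=BA$.

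For the ``only if'' direction, assume $AB=BA$. First I would invoke the spectral theorem to write $P^{-1}AP=\Lambda$ for some orthogonal $P$, grouping equal eigenvalues so that $\Lambda=\mathrm{diag}(\lambda_1 I_{m_1},\dots,\lambda_r I_{m_r})$ with $\lambda_1,\dots,\lambda_r$ pairwise distinct and $m_1+\cdots+m_r=m$. Setting $B'=P^{-1}BP$ (still symmetric), the hypothesis $AB=BA$ becomes $\Lambda B'=B'\Lambda$. Partitioning $B'$ into blocks $B'_{ij}$ conforming to $m_1,\dots,m_r$ and comparing $(i,j)$ blocks yields $(\lambda_i-\lambda_j)B'_{ij}=0$, hence $B'_{ij}=0$ whenever $i\ne j$; so $B'=\mathrm{diag}(B'_{11},\dots,B'_{rr})$ with each $B'_{ii}$ symmetric. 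Next I would diagonalize each block by the spectral theorem, choosing orthogonal $R_i$ with $R_i^{-1}B'_{ii}R_i$ diagonal, and assemble $R=\mathrm{diag}(R_1,\dots,R_r)$, which is orthogonal and, being block diagonal conforming to $\Lambda$, commutes with $\Lambda$. Then $Q:=PR$ is orthogonal, $Q^{-1}AQ=R^{-1}\Lambda R=\Lambda$ is diagonal, and $Q^{-1}BQ=R^{-1}B'R=\mathrm{diag}(R_1^{-1}B'_{11}R_1,\dots,R_r^{-1}B'_{rr}R_r)$ is diagonal, which completes the argument.

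The only delicate point is the treatment of repeated eigenvalues of $A$: when some $m_i>1$, conjugation by $P$ alone leaves $B'$ merely block diagonal rather than diagonal, and one must use the remaining freedom of rotating inside each eigenspace of $A$ — precisely the role played by the blocks $R_i$ — to finish the diagonalization without disturbing $\Lambda$. If every eigenvalue of $A$ is simple, the argument shortcuts, since then $B'$ is already diagonal.
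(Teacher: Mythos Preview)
Your argument is correct and is the standard textbook proof of simultaneous orthogonal diagonalization of commuting symmetric matrices. Note, however, that the paper does not supply its own proof of this lemma: it is quoted as a known result from Horn and Johnson's \emph{Matrix Analysis} \cite{Roger2013}, so there is no in-paper proof to compare against. Your write-up is exactly the argument one finds in that reference, including the correct handling of repeated eigenvalues via the block-diagonal refinement $R=\mathrm{diag}(R_1,\dots,R_r)$.
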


Let $\Lambda_A=Q^{-1}AQ$ and $\Lambda_B=Q^{-1}BQ$. From Lemma \ref{lem5}, if matrices $A,B\in \mathbb{R}^{m\times m}$ satisfies $AB=BA$, we employ $\lambda_i(A)$ (or $\lambda_i(B)$) to describe the entry located at the $i$-th row and $i$-th column of $\Lambda_A$ (or $\Lambda_B$) with respect to $Q$, as is the eigenvalue of $A$ (or $B$) obviously. 

For $A\in \mathbb{R}^{m\times m}$, we denote the $i$-th largest eigenvalue of $A$ by $\lambda_i^{\downarrow}(A)$. The following lemma is about the comparison of eigenvalues.
\begin{lemma}\label{lem6}
Let $A,B\in \mathbb{R}^{m\times m}$ be symmetric matrices. Then
\begin{equation}\label{eq7}
\lambda_j^{\downarrow}(A)\geq\lambda_i^{\downarrow}(B)+\lambda_{j-i+n}^{\downarrow}(A-B)
\end{equation}
for $1\leq j\leq i\leq m$.
\end{lemma}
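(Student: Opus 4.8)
The plan is to recognize Lemma~\ref{lem6} as a dual form of Weyl's eigenvalue perturbation inequality and to derive it from the Courant--Fischer min-max principle (here the index on $A-B$ is to be read as $j-i+m$, the matrices being $m\times m$). Write $C=A-B$, so that $A=B+C$, and recall that for a symmetric $M\in\mathbb{R}^{m\times m}$ and $1\le k\le m$,
\begin{equation*}
\lambda_k^{\downarrow}(M)=\min_{\dim V=m-k+1}\ \max_{0\ne v\in V}\ \frac{\langle v,Mv\rangle}{\langle v,v\rangle},
\end{equation*}
the minimum being taken over linear subspaces $V\subseteq\mathbb{R}^m$, together with the reflection identity $\lambda_k^{\downarrow}(-M)=-\lambda_{m-k+1}^{\downarrow}(M)$.

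First I would establish the standard Weyl inequality $\lambda_{p+q-1}^{\downarrow}(X+Y)\le\lambda_p^{\downarrow}(X)+\lambda_q^{\downarrow}(Y)$ for symmetric $X,Y$ whenever $p+q-1\le m$. Choose a subspace $V_1$ with $\dim V_1=m-p+1$ on which the Rayleigh quotient of $X$ attains its maximum value $\lambda_p^{\downarrow}(X)$, and a subspace $V_2$ with $\dim V_2=m-q+1$ on which the Rayleigh quotient of $Y$ attains its maximum value $\lambda_q^{\downarrow}(Y)$. Since $\dim V_1+\dim V_2-m=m-p-q+2\ge1$, the intersection $V_1\cap V_2$ has dimension at least $m-(p+q-1)+1$; taking any subspace $U\subseteq V_1\cap V_2$ of dimension exactly $m-(p+q-1)+1$ and using that on $U$ the Rayleigh quotient of $X+Y$ is bounded above by the sum of the Rayleigh quotients of $X$ and $Y$, hence by $\lambda_p^{\downarrow}(X)+\lambda_q^{\downarrow}(Y)$, the min-max formula for $\lambda_{p+q-1}^{\downarrow}(X+Y)$ yields the claim.

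Finally I would apply this inequality with $X=-B$ and $Y=-C=B-A$, so that $X+Y=-A$, and convert back through $\lambda_k^{\downarrow}(-M)=-\lambda_{m-k+1}^{\downarrow}(M)$; relabelling $p=m-i+1$ and $q=i-j+1$ transforms the resulting estimate into exactly $\lambda_j^{\downarrow}(A)\ge\lambda_i^{\downarrow}(B)+\lambda_{j-i+m}^{\downarrow}(A-B)$, and one checks that the hypothesis $1\le j\le i\le m$ is precisely what forces $p,q\ge1$ and $p+q-1=m+1-j\le m$, so the substitution is legitimate. Nothing in this argument is deep; the only place requiring care is the index bookkeeping, namely tracking the order reversal under $M\mapsto-M$ and confirming that the substituted indices stay within the admissible range for all $1\le j\le i\le m$. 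Alternatively, one may simply quote Weyl's inequality from \cite{Roger2013} and carry out only the sign-and-index manipulation.
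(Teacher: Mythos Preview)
Your proof is correct. The Courant--Fischer derivation of Weyl's inequality is standard and carried out cleanly, and your index substitution $p=m-i+1$, $q=i-j+1$ with the reflection $\lambda_k^{\downarrow}(-M)=-\lambda_{m-k+1}^{\downarrow}(M)$ delivers exactly the desired estimate; the range checks $p,q\ge1$ and $p+q-1\le m$ reduce precisely to $1\le j\le i\le m$ as you claim.

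As for comparison with the paper: there is nothing to compare, because the paper does not prove Lemma~\ref{lem6} at all. It is stated in the preliminaries as a known eigenvalue comparison result and attributed to \cite{Roger2013} (Horn and Johnson, \emph{Matrix Analysis}); no argument is given. Your closing remark that one could ``simply quote Weyl's inequality from \cite{Roger2013}'' is therefore exactly what the paper does. Your write-up goes further by supplying a self-contained derivation, which is fine and arguably preferable in an expository sense, but it is not required by the paper's own standards. One minor phrasing quibble: when you say ``on which the Rayleigh quotient of $X$ attains its maximum value $\lambda_p^{\downarrow}(X)$,'' what you mean is that the maximum of the Rayleigh quotient over $V_1$ equals $\lambda_p^{\downarrow}(X)$ (equivalently, the Rayleigh quotient is bounded above by $\lambda_p^{\downarrow}(X)$ on $V_1$); the argument uses only this upper bound, so the logic is sound even if the wording is slightly loose.
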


We then introduce the diagonally dominant and strictly diagonally dominant matrices \cite{Roger2013}.
\begin{definition}\label{std1}
A matrix $A\in \mathbb{R}^{m\times m}$ is diagonally dominant if
\begin{equation*}
|A_{ii}|\geq\sum\limits_{j\neq i}|A_{ij}|\quad \forall i=1,\ldots,m.
\end{equation*}
It is strictly diagonally dominant if
\begin{equation*}
|A_{ii}|>\sum\limits_{j\neq i}|A_{ij}|\quad \forall i=1,\ldots,m.
\end{equation*}
\end{definition}

\begin{lemma}\label{std2}
Let $A\in \mathbb{R}^{m\times m}$ be strictly diagonally dominant. If $A$ is symmetric and $A_{ii}>0$ for $i=1,\ldots,m$, then $A$ is positive definite.
\end{lemma}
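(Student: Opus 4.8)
The plan is to reduce the positive definiteness of $A$ to the positivity of its eigenvalues. Since $A$ is symmetric it has a full set of real eigenvalues, and a real symmetric matrix is positive definite if and only if all of its eigenvalues are strictly positive; so it suffices to show that every eigenvalue $\lambda$ of $A$ satisfies $\lambda>0$.

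First I would fix an eigenvalue $\lambda$ with an eigenvector $v\neq0$ and choose an index $i$ at which $|v_i|=\max_{1\le j\le m}|v_j|$, noting that $|v_i|>0$ because $v\neq0$. Reading off the $i$-th coordinate of $Av=\lambda v$ gives $(\lambda-A_{ii})v_i=\sum_{j\neq i}A_{ij}v_j$, whence
\[
|\lambda-A_{ii}|\,|v_i|\le\sum_{j\neq i}|A_{ij}|\,|v_j|\le|v_i|\sum_{j\neq i}|A_{ij}|<|v_i|\,A_{ii},
\]
where the final strict inequality is precisely the strict diagonal dominance of $A$ together with $A_{ii}>0$. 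Dividing through by $|v_i|$ yields $|\lambda-A_{ii}|<A_{ii}$, i.e. $0<\lambda<2A_{ii}$; in particular $\lambda>0$. This is exactly the assertion that every Gershgorin disc of $A$, which for symmetric $A$ degenerates to a real interval centred at $A_{ii}$, lies in the open right half-line.

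Since $\lambda$ was an arbitrary eigenvalue, all eigenvalues of $A$ are positive, and hence $A$ is positive definite, completing the argument.

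I do not anticipate a genuine obstacle: this is the real symmetric special case of the classical Gershgorin/Levy--Desplanques estimate. The only point requiring a moment's care is to make sure the division by $|v_i|$ is legitimate, which is automatic since the largest-modulus entry of a nonzero vector is nonzero; everything else is the routine triangle-inequality chain displayed above.
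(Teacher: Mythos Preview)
Your proof is correct: this is the standard Gershgorin-disc argument showing every eigenvalue of a real symmetric, strictly diagonally dominant matrix with positive diagonal lies in $(0,2A_{ii})$ for some $i$, hence is positive. The paper does not actually prove this lemma; it is stated in the preliminaries as a known fact taken from Horn and Johnson's \emph{Matrix Analysis}, so there is no authorial argument to compare against, but your approach is exactly the classical one underlying that reference.
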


\section{{Isomorphism between $\mathcal{L}_n$ and Euclidean space}}\label{iso}
In this section, we build up an isomorphism between $\mathcal{L}_n$ and the Euclidean space, which is the basis of this paper.

Let $\bar{n}=n\times J$, where $J$ denotes the cardinality of space $\Xi$ defined by $\{\xi^1,\ldots,\xi^J\}$. For $1\le i\le J$, let $x_{p_i}=\sqrt{p(\xi^i)}x(\xi^i)$ and group these $x_{p_i}$ in a vector $x$, i.e.,
 $x=(x_{p_1}^T,x_{p_2}^T,\ldots,x_{p_J}^T)^{T}$.
Define a mapping $\phi: \mathcal{L}_{n}\to \mathbb{R}^{\bar{n}}$ as
\begin{equation}\label{eq1ex}
\phi:\quad x(\cdot)\mapsto x.
\end{equation}
 Clearly, $\phi$ is the isomorphism between $\mathcal{L}_n$ and $\mathbb{R}^{\bar{n}}$.

\begin{lemma}\label{lem1}
Let $\phi$ be the mapping defined by (\ref{eq1ex}). Then $\phi$ is a linear isometric isomorphism between $\mathcal{L}_n$ and $\mathbb{R}^{\bar{n}}$.
\end{lemma}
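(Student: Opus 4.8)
The plan is to verify directly from the definitions that $\phi$ is linear, bijective, and inner-product preserving (hence norm preserving, i.e., isometric). Linearity and bijectivity are essentially immediate: by construction $\phi$ takes $x(\cdot)$ to the stacked vector whose $i$-th block is $\sqrt{p(\xi^i)}\,x(\xi^i)$, and since each $p(\xi^i)>0$, this block-scaling is an invertible linear map on each coordinate block, with inverse $x\mapsto x(\cdot)$ given by $x(\xi^i)=x_{p_i}/\sqrt{p(\xi^i)}$. So the substantive content is the isometry claim.

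For the isometry, I would show $\langle x(\cdot),w(\cdot)\rangle = \langle \phi(x(\cdot)),\phi(w(\cdot))\rangle$ for all $x(\cdot),w(\cdot)\in\mathcal{L}_n$, where the left side is the $\mathcal{L}_n$ inner product defined in (\ref{eqinner}) and the right side is the ordinary Euclidean inner product on $\mathbb{R}^{\bar n}$. Starting from the right-hand side, the Euclidean inner product of the stacked vectors decomposes as a sum over the $J$ scenario blocks,
\begin{equation*}
\langle \phi(x(\cdot)),\phi(w(\cdot))\rangle = \sum_{i=1}^{J}\bigl\langle \sqrt{p(\xi^i)}\,x(\xi^i),\ \sqrt{p(\xi^i)}\,w(\xi^i)\bigr\rangle = \sum_{i=1}^{J} p(\xi^i)\,\bigl\langle x(\xi^i),w(\xi^i)\bigr\rangle,
\end{equation*}
and then I would expand each Euclidean inner product $\langle x(\xi^i),w(\xi^i)\rangle$ over the $N$ stage-components as $\sum_{k=1}^N\langle x_k(\xi^i),w_k(\xi^i)\rangle$, which recovers exactly the double sum in (\ref{eqinner}). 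Taking $w(\cdot)=x(\cdot)$ gives $\|\phi(x(\cdot))\|^2=\|x(\cdot)\|^2$, so $\phi$ is isometric; together with linearity and bijectivity this proves the lemma.

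There is no real obstacle here — the proof is a bookkeeping exercise in unpacking the definition of the $\mathcal{L}_n$ inner product and the block structure of $\phi$. The only point requiring a modicum of care is making sure the $\sqrt{p(\xi^i)}$ weights are placed so that the products $\sqrt{p(\xi^i)}\cdot\sqrt{p(\xi^i)}=p(\xi^i)$ reproduce the probability weights in (\ref{eqinner}) exactly, and noting that positivity of all $p(\xi^i)$ is what guarantees the inverse map is well defined (so that $\phi$ is genuinely a bijection and not merely an isometric embedding). I would close by remarking that as a linear isometric isomorphism, $\phi$ and its inverse are both continuous, which is the property that will be used in later sections to transfer topological statements (closedness, boundedness, compactness, convexity) between $\mathcal{L}_n$ and $\mathbb{R}^{\bar n}$.
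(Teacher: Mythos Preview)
Your proof is correct. The paper actually gives no proof of this lemma at all---it merely asserts ``Clearly, $\phi$ is the isomorphism between $\mathcal{L}_n$ and $\mathbb{R}^{\bar{n}}$'' before stating the lemma and then moves on---so your explicit verification of linearity, bijectivity (using $p(\xi^i)>0$), and inner-product preservation is exactly the routine check the authors left to the reader, and your computation matches the one the paper later carries out in the proof of Proposition~\ref{prop1}(f).
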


Before giving some operational formulas, we introduce some related notations. The norm of $x(\cdot)$ in $\mathcal{L}_n$ induced by the inner product (\ref{eqinner}) can be defined as $\|x(\cdot)\|=\sqrt{\left<x(\cdot),x(\cdot)\right>}$. For the ease of statement, we use notation $\|\cdot\|$ to denote both the norm in $\mathcal{L}_n$ and the Euclidean norm in $\mathbb{R}^{\bar{n}}$. In addition, define $B(x_0,\epsilon)=\{x|\|x-x_0\|\leq\epsilon\}$.

\begin{proposition}\label{prop1}
Let $A,B\subset \mathcal{L}_n$ and $\phi$ be defined by (\ref{eq1ex}). Then, we have the following conclusions.
\begin{itemize}
\item[$(a)$] $\phi(A\cup B)=\phi(A)\cup\phi(B)$, $\phi(A\cap B)=\phi(A)\cap\phi(B)$;
\item[$(b)$] $\textrm{int}(\phi(A))=\phi(\textrm{int} A)$, $\textrm{bd}(\phi(A))=\phi(\textrm{bd} A)$;
\item[$(c)$] $\phi(A_{\infty})=\phi(A)_{\infty}$, $\phi(A)^*=\phi(A^*)$;
\item[$(d)$] $\phi(aA+bB)=a\phi(A)+b\phi(B)$, $\forall a,b\in R$;
\item[$(e)$] $\phi(A)$ is convex (or compact) if and only if $A$ is convex (or compact);
\item[$(f)$] $\phi(P_{\mathcal{M}_n}(x(\cdot)))=P_{\phi(\mathcal{M}_n)}(\phi(x(\cdot)))$, where $P_{\mathcal{M}_n}$ (or $P_{\phi(\mathcal{M}_n)}$) is the projection operator onto subspace $\mathcal{M}_n$ defined by (\ref{defmn}) (or $\phi(\mathcal{M}_n)$).
\item[$(g)$] $\phi(N_{\mathcal{C}}(x(\cdot)))=N_{\phi(\mathcal{C})}(\phi(x(\cdot)))$, where  $\mathcal{C}$ is defined by (\ref{defc}).
\end{itemize}\end{proposition}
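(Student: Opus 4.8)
The plan is to derive every item from the single fact supplied by Lemma \ref{lem1} --- that $\phi$ is a \emph{linear isometric isomorphism} --- so that essentially nothing beyond bookkeeping through the bijection is required. First I would isolate three consequences of Lemma \ref{lem1} and invoke them repeatedly: (i) $\phi$ and $\phi^{-1}$ are both linear and bijective; (ii) $\phi$ is a homeomorphism between $\mathcal{L}_n$ and $\mathbb{R}^{\bar{n}}$ (it is an isometry, or alternatively a linear bijection of finite-dimensional spaces); and (iii) $\phi$ preserves the inner product, i.e.\ $\langle \phi(u),\phi(v)\rangle=\langle u,v\rangle$, which follows from the isometry together with the polarization identity in a real Hilbert space.

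With (i) in hand, $(a)$ is immediate, since the image of a union under any map is the union of the images, and injectivity gives $\phi(A\cap B)=\phi(A)\cap\phi(B)$. Item $(d)$ is just elementwise linearity: $\phi(a x(\cdot)+b y(\cdot))=a\phi(x(\cdot))+b\phi(y(\cdot))$, hence $\phi(aA+bB)=a\phi(A)+b\phi(B)$. The convexity half of $(e)$ follows because $\phi$ carries the segment $[x(\cdot),y(\cdot)]$ onto $[\phi(x(\cdot)),\phi(y(\cdot))]$, and $\phi^{-1}$ reverses this. For the recession-cone identity in $(c)$, I would argue directly from the definition $A_{\infty}=\{d:x+\tau d\in A,\ \forall x\in A,\ \tau\ge0\}$: since $\phi(x+\tau d)=\phi(x)+\tau\phi(d)$ and $\phi$ is bijective and surjective, $d\in A_{\infty}$ if and only if $\phi(d)\in\phi(A)_{\infty}$.

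Using (ii), $(b)$ and the compactness half of $(e)$ are the standard facts that a homeomorphism preserves interior, boundary, and compactness, applied in both directions (the ``only if'' directions using $\phi^{-1}$). Using (iii), the dual-cone half of $(c)$ and items $(f)$, $(g)$ fall out by transcription: $\phi(d)\in\phi(A)^*$ means $\langle\phi(v),\phi(d)\rangle\ge0$ for all $v\in A$, which by (iii) says $\langle v,d\rangle\ge0$ for all $v\in A$, i.e.\ $d\in A^*$; the same computation with $\le0$ in place of $\ge0$, after writing $\phi(y(\cdot))-\phi(x(\cdot))=\phi(y(\cdot)-x(\cdot))$ and using that $\phi$ maps $\mathcal{C}$ bijectively onto $\phi(\mathcal{C})$, gives $(g)$; and for $(f)$, (iii) forces $\phi(\mathcal{M}_n^{\perp})=\phi(\mathcal{M}_n)^{\perp}$, so applying $\phi$ to the orthogonal splitting $x(\cdot)=P_{\mathcal{M}_n}(x(\cdot))+\bigl(x(\cdot)-P_{\mathcal{M}_n}(x(\cdot))\bigr)$ produces the orthogonal splitting of $\phi(x(\cdot))$ relative to $\phi(\mathcal{M}_n)$, whence the claim by uniqueness of orthogonal projections.

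I do not expect a genuine obstacle here; the proof is a list of routine verifications flowing from Lemma \ref{lem1}. The one place that deserves care is the passage from ``isometry'' to ``inner-product preserving'' via polarization, since that single upgrade is exactly what powers the dual-cone part of $(c)$, the normal-cone identity $(g)$, and --- through the consequence $\phi(\mathcal{M}_n^{\perp})=\phi(\mathcal{M}_n)^{\perp}$ --- the projection identity $(f)$. Everything else is direct unwinding of definitions through the bijection.
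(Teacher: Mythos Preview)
Your proposal is correct and follows essentially the same route as the paper: both rest entirely on Lemma~\ref{lem1}, treating $(a)$, $(d)$, $(e)$, $(g)$ as immediate and handling $(b)$ and the recession-cone half of $(c)$ by the same direct arguments. The only minor variation is in $(f)$: the paper verifies the variational-inequality characterization of the projection by a direct inner-product computation, whereas you argue via $\phi(\mathcal{M}_n^{\perp})=\phi(\mathcal{M}_n)^{\perp}$ and uniqueness of the orthogonal decomposition --- both are equally short and valid.
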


\begin{proof}
We only need to prove $(b)$, $(c)$ and $(f)$, inasmuch as the other conclusions are obvious.

To prove (b), it suffices to prove $\phi(B(x(\cdot),\epsilon))=B(\phi(x(\cdot)),\epsilon)$. Given $\phi(y(\cdot))$ with $\|y(\cdot)-x(\cdot)\|\leq\epsilon$, we have $\|\phi(y(\cdot))-\phi(x(\cdot))\|\leq\epsilon$ since $\|x(\cdot)\|=\|\phi(x(\cdot))\|$ for any $x(\cdot)\in\mathcal{L}_n$. Thus, $\phi(B(x(\cdot),\epsilon))\subset B(\phi(x(\cdot)),\epsilon)$. Similarly, we obtain $B(\phi(x(\cdot)),\epsilon)\subset\phi(B(x(\cdot),\epsilon))$. Hence, (b) holds.

We now prove (c). Given $d(\cdot)\in A_{\infty}$, we have $x(\cdot)+\lambda d(\cdot)\in A$ for any $\lambda\geq0$ and $x(\cdot)\in A$. It follows that $\phi(x(\cdot))+\lambda\phi(d(\cdot))\in\phi(A)$ for any $\lambda\geq0$ and $x(\cdot)\in A$, which indicates $\phi(d(\cdot))\in\phi(A)_{\infty}$. So we get $\phi(A_{\infty})\subset\phi(A)_{\infty}$. The converse inclusion can be obtained similarly.

To prove (f). Let $y(\cdot)=P_{\mathcal{M}_n}(x(\cdot))$. According to the projection theorem in \cite{Philippe2013}, $\phi(y(\cdot))=P_{\phi(\mathcal{M}_n)}(\phi(x(\cdot)))$ if and only if $\left<\phi(y(\cdot))-\phi(x(\cdot)),\phi(z(\cdot))-\phi(y(\cdot))\right>\ge0$ for all $z(\cdot)\in\mathcal{M}_n$. Further, based on the definition of mapping $\phi$ and Lemma $\ref{lem1}$, we have
\begin{equation} \label{prop-1}
\begin{array}{rcl}
&&\left<\phi(y(\cdot))-\phi(x(\cdot)),\phi(z(\cdot))-\phi(y(\cdot))\right>
= \left<\phi(y(\cdot)-x(\cdot)),\phi(z(\cdot)-y(\cdot))\right>\\
&= & \sum\limits_{1\le i\le J}{\left<\sqrt{p(\xi^i)}(y(\xi^i)-x(\xi^i)),\sqrt{p(\xi^i)}(z(\xi^i)-y(\xi^i))\right>}\\
&= & \sum\limits_{1\le i\le J}{p(\xi^i)\left<y(\xi^i)-x(\xi^i),z(\xi^i)-y(\xi^i)\right>}
= \left<y(\cdot)-x(\cdot),z(\cdot)-y(\cdot)\right>.
\end{array}
\end{equation}
Actually, the last expression in (\ref{prop-1}) is nonnegative as long as we see that $y(\cdot)=P_{\mathcal{M}_n}(x(\cdot))$ and employ the projection theorem again. Thus we obtain the conclusion that $\phi(P_{\mathcal{M}_n}(x(\cdot)))=P_{\phi(\mathcal{M}_n)}(\phi(x(\cdot)))$.
\end{proof}

The mapping $\phi$ also provides an alternative description of SVI (\ref{eq1}). Given a mapping $\mathcal{F}(x(\cdot))$ from $\mathcal{L}_{n}$ to $\mathcal{L}_n$, define
\begin{equation}\label{eq2.5}
\hat{F}(x)=(\sqrt{p(\xi^1)}F(\frac{1}{\sqrt{p(\xi^1)}}x_{p_1},\xi^1)^T,\ldots,\sqrt{p(\xi^J)}F(\frac{1}{\sqrt{p(\xi^J)}}x_{p_J},\xi^J)^T)^T,
\end{equation}
where $x=(x_{p_1}^T,x_{p_2}^T,\ldots,x_{p_J}^T)^{T}$. Obviously, $\phi(\mathcal{F}(x(\cdot)))=\hat{F}(\phi(x(\cdot)))$. Then, SVI (\ref{eq1}) can be transformed into
\begin{equation}\label{eq3}
-\hat{F}(x)\in N_{\phi(\mathcal{C}\cap\mathcal{N}_n)}(x).
\end{equation}
We use VI($\phi(\mathcal{C}\cap\mathcal{N}_n)$,$\hat{F}$) to denote the problem $(\ref{eq3})$, and its solution set is denoted as SOL($\phi(\mathcal{C}\cap\mathcal{N}_n)$,$\hat{F}$).

\begin{lemma} \label{lem3}
Let $\phi$ be defined by (\ref{eq1ex}). Then $\phi$ establishes a one-to-one correspondence between SOL($\mathcal{C}\cap\mathcal{N}_n$,$\mathcal{F}$) and SOL($\phi(\mathcal{C}\cap\mathcal{N}_n)$,$\hat{F}$). In other words, if $x(\cdot)$ is a solution of (\ref{eq1}), then $\phi(x(\cdot))$ is a solution of (\ref{eq3}), and vice versa.
\end{lemma}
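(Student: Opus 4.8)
The plan is to transport the defining inclusion of SVI (\ref{eq1}) through $\phi$ term by term, using that $\phi$ is a linear isometric isomorphism (Lemma \ref{lem1}) together with the identity $\phi(\mathcal{F}(x(\cdot)))=\hat{F}(\phi(x(\cdot)))$ recorded just before the statement. First I would note that for \emph{any} closed convex set $\mathcal{K}\subset\mathcal{L}_n$ one has $\phi(N_{\mathcal{K}}(x(\cdot)))=N_{\phi(\mathcal{K})}(\phi(x(\cdot)))$: indeed $v(\cdot)\in N_{\mathcal{K}}(x(\cdot))$ means $\langle v(\cdot),y(\cdot)-x(\cdot)\rangle\le 0$ for all $y(\cdot)\in\mathcal{K}$, and since $\phi$ preserves the inner product this is equivalent to $\langle\phi(v(\cdot)),\phi(y(\cdot))-\phi(x(\cdot))\rangle\le 0$ for all $y(\cdot)\in\mathcal{K}$, i.e. to $\langle\phi(v(\cdot)),z-\phi(x(\cdot))\rangle\le 0$ for all $z\in\phi(\mathcal{K})$ by surjectivity of $\phi$, which is precisely $\phi(v(\cdot))\in N_{\phi(\mathcal{K})}(\phi(x(\cdot)))$. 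Applied with $\mathcal{K}=\mathcal{C}\cap\mathcal{N}_n$ (which is closed and convex, and whose image $\phi(\mathcal{C})\cap\phi(\mathcal{N}_n)$ is again closed and convex by Proposition \ref{prop1}(a),(e)), this is the only extra fact needed beyond Proposition \ref{prop1}.

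Next, suppose $x(\cdot)\in\mathrm{SOL}(\mathcal{C}\cap\mathcal{N}_n,\mathcal{F})$, so that $-\mathcal{F}(x(\cdot))\in N_{\mathcal{C}\cap\mathcal{N}_n}(x(\cdot))$. Applying $\phi$ to both sides and using its linearity on the left gives $-\phi(\mathcal{F}(x(\cdot)))\in\phi(N_{\mathcal{C}\cap\mathcal{N}_n}(x(\cdot)))$; substituting $\phi(\mathcal{F}(x(\cdot)))=\hat{F}(\phi(x(\cdot)))$ and the normal-cone identity of the previous paragraph, this reads $-\hat{F}(\phi(x(\cdot)))\in N_{\phi(\mathcal{C}\cap\mathcal{N}_n)}(\phi(x(\cdot)))$, i.e. $\phi(x(\cdot))\in\mathrm{SOL}(\phi(\mathcal{C}\cap\mathcal{N}_n),\hat{F})$. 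For the converse I would run exactly the same chain in reverse, every step being an equivalence, noting that $\phi^{-1}$ is again a linear isometric isomorphism; hence $x\in\mathrm{SOL}(\phi(\mathcal{C}\cap\mathcal{N}_n),\hat{F})$ forces $\phi^{-1}(x)\in\mathrm{SOL}(\mathcal{C}\cap\mathcal{N}_n,\mathcal{F})$. Since $\phi$ is a bijection, the map $x(\cdot)\mapsto\phi(x(\cdot))$ is the desired one-to-one correspondence between the two solution sets.

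I do not anticipate a serious obstacle. The only point deserving care is that Proposition \ref{prop1}(g) is stated for the scenario-wise set $\mathcal{C}$ of the form (\ref{defc}), whereas the normal cone in (\ref{eq1}) is taken over $\mathcal{C}\cap\mathcal{N}_n$, which is \emph{not} of that form because $\mathcal{N}_n$ couples distinct scenarios; this is exactly why I isolate the short isometry argument above, which uses no product structure and needs no constraint qualification. Everything else is a direct substitution of the already-established identities for $\phi$.
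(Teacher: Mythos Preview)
Your proof is correct and follows essentially the same approach as the paper: both arguments boil down to the identity $\langle \hat{F}(\phi(x(\cdot))),\phi(y(\cdot))-\phi(x(\cdot))\rangle=\langle \mathcal{F}(x(\cdot)),y(\cdot)-x(\cdot)\rangle$, which is just the isometry of $\phi$. The paper verifies this identity by direct scenario-wise computation, whereas you package it abstractly as a normal-cone transfer $\phi(N_{\mathcal{K}}(x(\cdot)))=N_{\phi(\mathcal{K})}(\phi(x(\cdot)))$ valid for any closed convex $\mathcal{K}$; your remark that Proposition~\ref{prop1}(g) does not literally cover $\mathcal{C}\cap\mathcal{N}_n$ is well taken, and your short isometry argument fills that gap cleanly.
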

\begin{proof}
Assume that $x(\cdot)$ is the solution for (\ref{eq1}). Let $x=\phi(x(\cdot))$. It suffices to show that $\left<\hat{F}(x),y-x\right>\geq0$ for all $y=\phi(y(\cdot))\in\phi(\mathcal{C}\cap\mathcal{N}_n)$. Actually,
\begin{align*}
\left<\hat{F}(x),y-x\right>
= & \sum\limits_{1\leq i\leq J}{\left<\sqrt{p(\xi^i)}F(\frac{1}{\sqrt{p(\xi^i)}}x_{p_i},\xi^i),y_{p_i}-x_{p_i}\right>}\\
= & \sum\limits_{1\leq i\leq J}{\left<\sqrt{p(\xi^i)}F(x(\xi^i),\xi^i),\sqrt{p(\xi^i)}y(\xi^i)-\sqrt{p(\xi^i)}x(\xi^i)\right>}\\
= & \left<\mathcal{F}(x(\cdot)),y(\cdot)-x(\cdot)\right>\\
\geq & 0.
\end{align*}
To prove the converse conclusion, we only need to reverse the above deductions.
\end{proof}

\section{Properties of solution sets  of pseudomonotone  SVIs} \label{exist}
In this section, we generalize  the solution theory of pseudomonotone determined VIs given in \cite{Facchinei2003} to multistage pseudomonotone  SVIs. { We first introduce the results from \cite{Facchinei2003} in the following theorems.
\begin{theorem}\label{F1}
Let $K\subset \mathbb{R}^{m}$ be closed convex and  $F:K\rightarrow \mathbb{R}^{m}$ be continuous. Assume that $F(x)$ is pseudomonotone on $K$. Then the following three statements $(a), (b)$ and $(c)$ are equivalent. Moreover, if there exists some $\hat{x}\in K$ such that the set
\begin{equation*}
	L_{\leq}=\{x\in K:\left<F(x),x-\hat{x}\right>\leq0\}
\end{equation*}
is bounded, then SOL($K$,$F$) is nonempty and compact.

\begin{itemize}
	\item[$(a)$] There exists $\hat{x}\in K$ such that the set
	\begin{equation*}
		L_<=\{x\in K:\left<F(x),x-\hat{x}\right><0\}
	\end{equation*}
	is bounded (possibly empty).\\
	\item[$(b)$] There exist a bounded open set $\Omega$ and some $\hat{x}\in K\cap\Omega$ such that
	\begin{equation*}
		\left<F(x),x-\hat{x}\right>\geq0,\quad \forall x\in K\cap bd\Omega.
	\end{equation*}
	\item[$(c)$] VI($K$,$F$) has a solution.
\end{itemize}
\end{theorem}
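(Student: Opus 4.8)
The plan is to establish the cycle of implications $(c)\Rightarrow(b)\Rightarrow(a)\Rightarrow(c)$ and then read off the ``moreover'' conclusion from $(a)$; this is essentially the pseudomonotone coercivity theorem of \cite{Facchinei2003}. Pseudomonotonicity will be invoked only in the first two arrows, while $(a)\Rightarrow(c)$ is the classical ``truncate to a ball'' argument for continuous variational inequalities, which uses nothing but continuity of $F$ and closedness and convexity of $K$.

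For $(c)\Rightarrow(b)$, if $x^{*}\in$ SOL$(K,F)$ then $\langle F(x^{*}),y-x^{*}\rangle\ge0$ for every $y\in K$, and pseudomonotonicity upgrades this to $\langle F(y),y-x^{*}\rangle\ge0$ for every $y\in K$; hence $\hat x:=x^{*}$ together with any bounded open $\Omega$ containing $x^{*}$ fulfils $(b)$. For $(b)\Rightarrow(a)$, with $\Omega,\hat x$ as in $(b)$ I would show $L_{<}\subseteq\mathrm{cl}\,\Omega$, which is bounded. Indeed, if some $x\in L_{<}$ lay outside $\mathrm{cl}\,\Omega$, the segment $[\hat x,x]\subseteq K$ would meet $\mathrm{bd}\,\Omega$ at a point $z=(1-\theta)\hat x+\theta x$ with $\theta\in(0,1)$; then $(b)$ gives $\langle F(z),z-\hat x\rangle\ge0$, which means $\langle F(z),x-z\rangle\ge0$ since $z-\hat x$ and $x-z$ are positive multiples of $x-\hat x$, and pseudomonotonicity then yields $\langle F(x),x-z\rangle\ge0$, hence $\langle F(x),x-\hat x\rangle\ge0$, contradicting $x\in L_{<}$.

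For $(a)\Rightarrow(c)$, choose $r>0$ so large that $L_{<}\cup\{\hat x\}$ lies in the open ball $\{x:\|x\|<r\}$ and put $K_{r}:=K\cap B(0,r)$, which is nonempty, compact and convex; by the classical existence result for continuous VIs on compact convex sets, VI$(K_{r},F)$ has a solution $x^{r}$. If $\|x^{r}\|<r$, then for any $y\in K$ the point $x^{r}+t(y-x^{r})$ lies in $K_{r}$ for small $t>0$, and letting $t\downarrow0$ shows $x^{r}$ solves VI$(K,F)$. If $\|x^{r}\|=r$, then since $\hat x\in K$ lies in the interior of $B(0,r)$ the normal-cone sum rule $N_{K_{r}}(x^{r})=N_{K}(x^{r})+N_{B(0,r)}(x^{r})$ applies, producing $\lambda\ge0$ with $\langle F(x^{r})+\lambda x^{r},\,y-x^{r}\rangle\ge0$ for all $y\in K$; testing $y=\hat x$ and using $\langle x^{r},\hat x-x^{r}\rangle<0$ (because $\|\hat x\|<r=\|x^{r}\|$) together with $\langle F(x^{r}),\hat x-x^{r}\rangle\le0$ (because $\|x^{r}\|=r$ forces $x^{r}\notin L_{<}$) forces $\lambda=0$, so again $x^{r}$ solves VI$(K,F)$. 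For the ``moreover'' part, boundedness of $L_{\le}$ gives boundedness of $L_{<}\subseteq L_{\le}$, hence $(a)$ and SOL$(K,F)\ne\emptyset$; moreover every $x^{*}\in$ SOL$(K,F)$ satisfies $\langle F(x^{*}),\hat x-x^{*}\rangle\ge0$, i.e.\ $x^{*}\in L_{\le}$, so SOL$(K,F)\subseteq L_{\le}$ is bounded, and being closed (by continuity of $F$ and closedness of $K$) it is compact.

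The delicate point is $(a)\Rightarrow(c)$ in the case $\|x^{r}\|=r$: one must extract the outward normal multiplier $\lambda$ and exploit the boundedness of $L_{<}$ to annihilate it. An alternative for this step is a Brouwer-degree homotopy on $\Omega$ between $x\mapsto x-\hat x$ and the natural map $x\mapsto x-\Pi_{K}(x-F(x))$, where condition $(b)$ exactly prevents the homotopy from vanishing on $\mathrm{bd}\,\Omega$, forcing the degree to equal $1$; I would use whichever device fits the paper's background best. The $(b)\Rightarrow(a)$ boundary-crossing step is the one place where pseudomonotonicity is genuinely indispensable.
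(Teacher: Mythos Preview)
The paper does not actually prove Theorem~\ref{F1}; it is quoted verbatim from \cite{Facchinei2003} as a known deterministic result, and is then used as a black box in the proof of the SVI analogue (Theorem~\ref{thm2}). So there is no ``paper's own proof'' to compare against. Your argument is correct and is essentially the standard Facchinei--Pang proof.

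That said, it is worth noting that the cycle you run, $(c)\Rightarrow(b)\Rightarrow(a)\Rightarrow(c)$, is the reverse of the one the paper uses in the SVI analogue, where the order is $(a)\Rightarrow(b)\Rightarrow(c)\Rightarrow(a)$. In that order, $(a)\Rightarrow(b)$ is a one-liner (take $\Omega$ any bounded open set containing $L_<\cup\{\hat x\}$; then $L_<\cap\mathrm{bd}\,\Omega=\emptyset$) and pseudomonotonicity is needed only once, in $(c)\Rightarrow(a)$, to make $L_<$ empty. Your cycle works just as well but invokes pseudomonotonicity twice, including in the segment-crossing argument for $(b)\Rightarrow(a)$, which is slightly more laborious than necessary.

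One small simplification in your $(a)\Rightarrow(c)$, Case $\|x^r\|=r$: you can bypass the normal-cone sum rule and the multiplier $\lambda$ altogether. Since $\hat x\in K_r$, the VI on $K_r$ gives $\langle F(x^r),\hat x-x^r\rangle\ge 0$; combined with $x^r\notin L_<$ this forces $\langle F(x^r),\hat x-x^r\rangle=0$. Then for any $y\in K$ and small $t>0$ the point $\hat x+t(y-\hat x)$ lies in $K_r$, and testing it in the VI yields $\langle F(x^r),y-\hat x\rangle\ge 0$, hence $\langle F(x^r),y-x^r\rangle\ge 0$. This avoids the constraint-qualification check your sum-rule route implicitly requires.
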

\begin{theorem}\label{F2}
	Let $K\subset \mathbb{R}^{m}$ be closed convex and  $F:K\rightarrow \mathbb{R}^{m}$ be continuous. If $F$ is pseudomonotone on $K$, then the following statements hold.
	\begin{itemize}
		\item[$(a)$] The solution set SOL($K$,$F$) is convex.
		\item[$(b)$] If there exists $\hat{x}\in K$ such that $F(\hat{x})\in {\rm int}(K_\infty)^*$,  then SOL($K$,$F$) is nonempty, convex and compact.
\end{itemize}\end{theorem}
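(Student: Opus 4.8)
The plan is to derive both parts from the ``dual'' (Minty-type) characterization of the solution set that pseudomonotonicity supplies. First I would show that, for continuous pseudomonotone $F$ on the closed convex set $K$,
\begin{equation*}
\mathrm{SOL}(K,F)=\Big\{x^{*}\in K:\ \langle F(y),y-x^{*}\rangle\geq 0\ \text{ for all }y\in K\Big\}.
\end{equation*}
The inclusion ``$\subseteq$'' is immediate: if $x^{*}$ solves VI($K$,$F$) then $\langle F(x^{*}),y-x^{*}\rangle\geq0$ for every $y\in K$, and pseudomonotonicity turns this into $\langle F(y),y-x^{*}\rangle\geq0$. For ``$\supseteq$'', given $x^{*}$ in the right-hand set and any $z\in K$, apply the inequality at $y_{t}=x^{*}+t(z-x^{*})\in K$ for $t\in(0,1]$, divide by $t$, and let $t\downarrow0$ using continuity of $F$ to get $\langle F(x^{*}),z-x^{*}\rangle\geq0$. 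Since the right-hand set is $K$ intersected with the closed half-spaces $\{x:\langle F(y),y-x\rangle\geq0\}$ over all $y\in K$, it is closed and convex, which already gives part $(a)$.

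For part $(b)$ the plan is to reduce to Theorem \ref{F1}: it suffices to prove that
\begin{equation*}
L_{\leq}=\{x\in K:\langle F(x),x-\hat{x}\rangle\leq0\}
\end{equation*}
is bounded, because then Theorem \ref{F1} yields nonemptiness and compactness of $\mathrm{SOL}(K,F)$, and convexity follows from part $(a)$. To bound $L_{\leq}$, I would argue by contradiction: if it were unbounded, take $x^{k}\in L_{\leq}$ with $\|x^{k}\|\to\infty$ and, along a subsequence, $x^{k}/\|x^{k}\|\to d$ with $\|d\|=1$; since $K$ is closed and convex, $d\in K_{\infty}$. From $\langle F(x^{k}),x^{k}-\hat{x}\rangle\leq0$ we get $\langle F(x^{k}),\hat{x}-x^{k}\rangle\geq0$, and pseudomonotonicity applied with $x=x^{k}$, $y=\hat{x}$ gives $\langle F(\hat{x}),\hat{x}-x^{k}\rangle\geq0$, i.e. $\langle F(\hat{x}),x^{k}-\hat{x}\rangle\leq0$. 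Dividing by $\|x^{k}\|$ and letting $k\to\infty$ yields $\langle F(\hat{x}),d\rangle\leq0$, contradicting $F(\hat{x})\in{\rm int}(K_{\infty})^{*}$, which forces $\langle F(\hat{x}),d\rangle>0$ for every nonzero $d\in K_{\infty}$.

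The main obstacle is the step where the interior hypothesis is actually used, namely the fact that $v\in{\rm int}\big((K_{\infty})^{*}\big)$ is equivalent to $\langle v,d\rangle>0$ for all $d\in K_{\infty}\setminus\{0\}$; this is where pointedness of the recession cone $K_{\infty}$ enters and it is the point most easily glossed over, so I would isolate it as a short lemma (or cite it) before running the contradiction argument. The only other ingredient is the standard recession-cone fact that a normalized limit of an unbounded sequence in a closed convex set lies in $K_{\infty}$; apart from these two points, the proof is simply the combination of the Minty reformulation with Theorem \ref{F1}.
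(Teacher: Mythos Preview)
The paper does not give its own proof of this theorem: it is quoted verbatim as a known result from \cite{Facchinei2003} (Facchinei and Pang) and then used as a black box to derive the stochastic analogue, Theorem~\ref{thm3}, via the isomorphism $\phi$. So there is nothing in the paper to compare your argument against.

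That said, your proposed proof is correct and is essentially the standard argument one finds in \cite{Facchinei2003}. The Minty reformulation you give is exactly how convexity of $\mathrm{SOL}(K,F)$ is obtained in the pseudomonotone case, and your coercivity argument for part $(b)$---showing $L_{\leq}$ is bounded and then invoking Theorem~\ref{F1}---is the standard route. One small comment on the ``main obstacle'' you flag: the pointedness of $K_{\infty}$ is not an additional hypothesis you need to supply. The assumption $F(\hat{x})\in{\rm int}\big((K_{\infty})^{*}\big)$ already forces $(K_{\infty})^{*}$ to have nonempty interior, hence to be full-dimensional, which is equivalent to $K_{\infty}$ being pointed. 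Moreover, the implication ``$v\in{\rm int}\,C^{*}\Rightarrow\langle v,d\rangle>0$ for all $d\in C\setminus\{0\}$'' holds for any closed convex cone $C$ directly: if $B(v,\varepsilon)\subset C^{*}$, then $v-\tfrac{\varepsilon}{2\|d\|}d\in C^{*}$ gives $\langle v,d\rangle\geq\tfrac{\varepsilon}{2}\|d\|>0$. So your flagged step is routine rather than an obstacle.
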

\begin{theorem}\label{F3}
Let $K\subset \mathbb{R}^{m}$ be closed convex and  $F:K\rightarrow \mathbb{R}^{m}$ be continuous. Assume that $F$ is pseudomonotone on $K$. Then the set SOL($K$,$F$) is nonempty and bounded if and only if
	\begin{equation*}
		K_\infty\cap[-(F(K)^*)]=\{0\}.
	\end{equation*}
\end{theorem}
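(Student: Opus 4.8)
The plan is to establish the stated equivalence by proving the two implications separately; the necessity direction (solvability with boundedness $\Rightarrow$ the cone condition) is short, while the sufficiency direction carries the weight. Throughout, the one algebraic fact I would lean on is the Minty-type description of the solution set of a continuous pseudomonotone VI,
\[
\mathrm{SOL}(K,F)=\bigcap_{y\in K}\{x\in K:\ \langle F(y),y-x\rangle\ge 0\},
\]
whose inclusion from left to right is immediate from pseudomonotonicity, and whose reverse inclusion follows by testing a point $x$ of the right-hand side against $y_t=x+t(y-x)\in K$ and letting $t\downarrow 0$, using continuity of $F$. (This is the same mechanism underlying convexity of $\mathrm{SOL}(K,F)$ in Theorem \ref{F2}$(a)$.)

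For necessity, suppose $\mathrm{SOL}(K,F)$ is nonempty and bounded. I would take any $d\in K_\infty$ with $-d\in F(K)^*$, i.e.\ $\langle F(x),d\rangle\le 0$ for all $x\in K$, together with any $x^*\in\mathrm{SOL}(K,F)$, and show that the entire ray $x^*+td$, $t\ge 0$, lies in $\mathrm{SOL}(K,F)$: each $x^*+td\in K$ since $d\in K_\infty$, and for every $y\in K$,
\[
\langle F(y),\,y-(x^*+td)\rangle=\langle F(y),y-x^*\rangle-t\,\langle F(y),d\rangle\ge 0,
\]
since $\langle F(y),y-x^*\rangle\ge 0$ by the representation above and $-t\langle F(y),d\rangle\ge 0$. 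Boundedness of $\mathrm{SOL}(K,F)$ then forces $d=0$, which is exactly $K_\infty\cap[-(F(K)^*)]=\{0\}$.

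For sufficiency, assume $K_\infty\cap[-(F(K)^*)]=\{0\}$. I would argue by truncation and normalization. For $r>0$ put $K_r=K\cap B(0,r)$, which for large $r$ is nonempty, compact and convex, and on which $F$ remains continuous and pseudomonotone; since the level set $L_{\le}$ of Theorem \ref{F1} is automatically bounded over the compact set $K_r$, that theorem furnishes a solution $x^r$ of $\mathrm{VI}(K_r,F)$. If $\|x^r\|<r$ for some such $r$, then shrinking any test point $y\in K$ toward $x^r$ along $K$ shows that $x^r$ in fact solves $\mathrm{VI}(K,F)$. Otherwise $\|x^r\|=r$ for all large $r$; passing to a subsequence, $x^r/r\to d$ with $\|d\|=1$, and $d\in K_\infty$ because $K$ is closed and convex. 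Fixing any $z\in K$ and taking $r>\|z\|$ so that $z\in K_r$, we get $\langle F(x^r),z-x^r\rangle\ge 0$, which pseudomonotonicity upgrades to $\langle F(z),z-x^r\rangle\ge 0$, i.e.\ $\langle F(z),x^r\rangle\le\langle F(z),z\rangle$; dividing by $r$ and letting $r\to\infty$ yields $\langle F(z),d\rangle\le 0$ for all $z\in K$, i.e.\ $-d\in F(K)^*$. Then $0\ne d\in K_\infty\cap[-(F(K)^*)]$, contradicting the hypothesis, so $\mathrm{SOL}(K,F)\ne\emptyset$. Boundedness comes from the same normalization: if $x^k\in\mathrm{SOL}(K,F)$ with $\|x^k\|\to\infty$, then $\langle F(x^k),z-x^k\rangle\ge 0$ for all $z\in K$, hence $\langle F(z),x^k\rangle\le\langle F(z),z\rangle$, and a limit direction of $x^k/\|x^k\|$ is a nonzero member of $K_\infty\cap[-(F(K)^*)]$, again impossible.

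The step I expect to be the main obstacle lies in the sufficiency direction: converting the truncated-VI inequality $\langle F(x^r),z-x^r\rangle\ge 0$ --- nonlinear in $x^r$ through $F(x^r)$ --- into the inequality $\langle F(z),z-x^r\rangle\ge 0$, which is \emph{affine} in $x^r$ and therefore survives division by $r$ and the passage to the limit. This is precisely what pseudomonotonicity buys, and without it the normalization argument would not close. One also has to take care that the extracted direction $d$ is genuinely nonzero and recessive, and to handle the dichotomy between the case where a truncated solution already solves $\mathrm{VI}(K,F)$ and the case where the truncated solutions escape to infinity.
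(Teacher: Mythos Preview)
Your argument is correct, but note that the paper does not prove Theorem~\ref{F3} at all: it is quoted verbatim as a known result from Facchinei and Pang~\cite{Facchinei2003} (see the sentence ``We first introduce the results from~\cite{Facchinei2003} in the following theorems''), and is only invoked as a black box in the proof of Theorem~\ref{thm4} via the isomorphism~$\phi$. What you have written---the Minty characterization of $\mathrm{SOL}(K,F)$ for the necessity half, and the compact-truncation plus normalization argument for the sufficiency half, with pseudomonotonicity used to linearize $\langle F(x^r),z-x^r\rangle\ge 0$ into $\langle F(z),z-x^r\rangle\ge 0$---is exactly the classical proof found in that reference, so there is no alternative approach in the paper to compare against.
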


Theorems \ref{F1}, \ref{F2} and \ref{F3} do not pertain to the pseudomonotne SVI $(\ref{eq1})$ due to the fact that they are based on the Euclidean space. Nonetheless, this extension follows the isomorphim introduced in Section \ref{iso}, which are presented below.}

\begin{theorem} \label{thm2}
Let $\mathcal{F}(x(\cdot)):\mathcal{L}_{n}\rightarrow\mathcal{L}_n$ be continuous. Assume that $\mathcal{F}(x(\cdot))$ is pseudomonotone on $\mathcal{C}\cap\mathcal{N}_n$. Then the following three statements $(a), (b)$ and $(c)$ are equivalent. Moreover, if there exists some $\hat{x}(\cdot)\in\mathcal{C}\cap\mathcal{N}_n$ such that the set
\begin{equation*}
L_{\leq}=\{x(\cdot)\in\mathcal{C}\cap\mathcal{N}_n:\left<\mathcal{F}(x(\cdot)),x(\cdot)-\hat{x}(\cdot)\right>\leq0\}
\end{equation*}
is bounded, then SOL($\mathcal{C}\cap\mathcal{N}_n$,$\mathcal{F}$) is nonempty and compact.

\begin{itemize}
\item[$(a)$] There exists $\hat{x}(\cdot)\in\mathcal{C}\cap\mathcal{N}_n$ such that the set
\begin{equation*}
L_<=\{x(\cdot)\in\mathcal{C}\cap\mathcal{N}_n:\left<\mathcal{F}(x(\cdot)),x(\cdot)-\hat{x}(\cdot)\right><0\}
\end{equation*}
is bounded (possibly empty).\\
\item[$(b)$] There exist a bounded open set $\Omega$ and some $\hat{x}(\cdot)\in\mathcal{C}\cap\mathcal{N}_n\cap\Omega$ such that
\begin{equation*}
\left<\mathcal{F}(x(\cdot)),x(\cdot)-\hat{x}(\cdot)\right>\geq0,\quad \forall x(\cdot)\in\mathcal{C}\cap\mathcal{N}_n\cap bd\Omega.
\end{equation*}
\item[$(c)$] SVI($\mathcal{C}\cap\mathcal{N}_n$,$\mathcal{F}$) has a solution.
\end{itemize}
\end{theorem}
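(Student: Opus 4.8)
The plan is to reduce Theorem \ref{thm2} to its Euclidean-space counterpart, Theorem \ref{F1}, through the linear isometric isomorphism $\phi$ of Section \ref{iso}. Put $K=\phi(\mathcal{C}\cap\mathcal{N}_n)$. First I would verify the hypotheses of Theorem \ref{F1} for the pair $(K,\hat F)$: since $\mathcal{C}\cap\mathcal{N}_n$ is closed and convex and $\phi$ is a homeomorphism preserving convexity (Proposition \ref{prop1}(e)), $K$ is closed and convex; since $\hat F=\phi\circ\mathcal{F}\circ\phi^{-1}$ and $\mathcal{F}$ is continuous, $\hat F$ is continuous on $K$. Pseudomonotonicity transfers because, for $x=\phi(x(\cdot))$ and $y=\phi(y(\cdot))$, Lemma \ref{lem1} gives $\langle \hat F(x),y-x\rangle=\langle \phi(\mathcal{F}(x(\cdot))),\phi(y(\cdot)-x(\cdot))\rangle=\langle \mathcal{F}(x(\cdot)),y(\cdot)-x(\cdot)\rangle$, so the implication defining pseudomonotonicity of $\mathcal{F}$ on $\mathcal{C}\cap\mathcal{N}_n$ is exactly the one for $\hat F$ on $K$.

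Next I would translate the three statements and the auxiliary sets. Writing $\hat x=\phi(\hat x(\cdot))$, the same inner-product identity yields $\phi(L_<)=\{x\in K:\langle \hat F(x),x-\hat x\rangle<0\}$ and $\phi(L_\leq)=\{x\in K:\langle \hat F(x),x-\hat x\rangle\leq0\}$, and since $\phi$ is an isometry it carries bounded sets to bounded sets and back, so boundedness of $L_<$ (resp. $L_\leq$) is equivalent to boundedness of its image; this matches statement $(a)$ and the extra hypothesis of Theorem \ref{F1}. For $(b)$, $\phi$ being a homeomorphism sends a bounded open $\Omega$ to a bounded open $\phi(\Omega)$, and Proposition \ref{prop1}(a),(b) give $\phi(\mathcal{C}\cap\mathcal{N}_n\cap bd\Omega)=K\cap bd(\phi(\Omega))$ together with $\hat x(\cdot)\in\mathcal{C}\cap\mathcal{N}_n\cap\Omega\iff\hat x\in K\cap\phi(\Omega)$, so $(b)$ for $\mathcal{F}$ is precisely $(b)$ for $\hat F$. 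For $(c)$, Lemma \ref{lem3} already states that SVI($\mathcal{C}\cap\mathcal{N}_n$,$\mathcal{F}$) has a solution iff VI($K$,$\hat F$) does.

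With these translations in hand, Theorem \ref{F1} applied to $(K,\hat F)$ gives the equivalence of the transferred $(a)$, $(b)$, $(c)$, hence of the original $(a)$, $(b)$, $(c)$; and, assuming $L_\leq$ bounded, it gives that SOL($K$,$\hat F$) is nonempty and compact. Finally, the one-to-one correspondence of solution sets in Lemma \ref{lem3} yields nonemptiness of SOL($\mathcal{C}\cap\mathcal{N}_n$,$\mathcal{F}$), and Proposition \ref{prop1}(e) transfers compactness back, which completes the argument.

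I do not expect a genuine mathematical obstacle here: the constructions of Section \ref{iso} are designed precisely to make this a routine transfer. The only points demanding care are bookkeeping ones — matching each topological or order-theoretic notion in $(a)$, $(b)$, $(c)$ (boundedness, openness, boundary, the sign of the gap function $\langle \mathcal{F}(x(\cdot)),x(\cdot)-\hat x(\cdot)\rangle$, and the solution sets themselves) to the correct clause of Proposition \ref{prop1} and to the isometry property of Lemma \ref{lem1}, and tracking the distinguished point $\hat x(\cdot)$ and its image $\hat x$ consistently across all three statements.
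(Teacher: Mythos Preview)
Your proposal is correct and takes essentially the same approach as the paper: reduce to the Euclidean VI($K,\hat F$) via the isomorphism $\phi$ and invoke Theorem \ref{F1}. The paper differs only cosmetically, proving $(a)\Rightarrow(b)$ and $(c)\Rightarrow(a)$ directly in $\mathcal{L}_n$ (the former purely set-theoretic, the latter via pseudomonotonicity showing $L_<=\emptyset$) and reserving $\phi$ for $(b)\Rightarrow(c)$, while you transfer all three statements wholesale; both arguments are valid.
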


\begin{proof} We first prove the statements (a), (b) and (c) are equivalent.

$``(a)\Rightarrow(b)"$. Let $\Omega$ be a bounded open set containing $L_<\cup\{\hat{x}(\cdot)\}$. It suffices to see that $L_<\cap bd\Omega=\emptyset$.

$``(b)\Rightarrow(c)"$. Denote $x=\phi(x(\cdot))$ with $x(\cdot)\in\mathcal{C}\cap\mathcal{N}_n\cap\Omega$, and $\hat{x}=\phi(\hat{x}(\cdot))$. Following the same procedure in Lemma \ref{lem3}, we have
\begin{equation*}
\left<\hat{F}(\hat{x}),x-\hat{x}\right>\geq0,\quad \forall x\in\phi(\mathcal{C}\cap\mathcal{N}_n)\cap \textrm{bd}(\phi(\Omega)).
\end{equation*}
Then, SOL($\phi(\mathcal{C}\cap\mathcal{N}_n)$,$\hat{F}$) is nonempty by means of Theorem \ref{F1}. In terms of Lemma \ref{lem3}, SVI($\mathcal{C}\cap\mathcal{N}_n$,$\mathcal{F}$) has a solution.

$``(c)\Rightarrow(a)"$. Let $\hat{x}(\cdot)$ be a solution of SVI($\mathcal{C}\cap\mathcal{N}_n$,$\mathcal{F}$). Then, $$\left<\mathcal{F}(\hat{x}(\cdot)),x(\cdot)-\hat{x}(\cdot)\right>\geq0,\quad \forall x(\cdot)\in\mathcal{C}\cap\mathcal{N}_n.$$
 Since $\mathcal{F}$ is pseudomonotone on $\mathcal{C}\cap\mathcal{N}_n$, we have $\left<\mathcal{F}(x(\cdot)),x(\cdot)-\hat{x}(\cdot)\right>\geq0$ for any $x(\cdot)\in\mathcal{C}\cap\mathcal{N}_n$, as indicates the emptiness of $L_<$.

 Since there exists some $\hat{x}(\cdot)\in\mathcal{C}\cap\mathcal{N}_n$ such that the set such that $L_{\le}$ is bounded, (a) holds due to $L_<\subset L_{\leq}$. Thus, (c) holds, which implies that  SOL($\mathcal{C}\cap\mathcal{N}_n$,$\mathcal{F}$) is nonempty. Furthermore, SOL($\mathcal{C}\cap\mathcal{N}_n$,$\mathcal{F}$) is compact because SOL($\mathcal{C}\cap\mathcal{N}_n$,$\mathcal{F}$) is closed and SOL$(\mathcal{C}\cap\mathcal{N}_n,\mathcal{F})\subset L_{\leq}$.
\end{proof}

\begin{theorem}\label{thm3}
Let $\mathcal{F}(x(\cdot)):\mathcal{L}_{n}\rightarrow\mathcal{L}_n$ be continuous. Assume that $\mathcal{F}(x(\cdot))$ is pseudomonotone on $\mathcal{C}\cap\mathcal{N}_n$, the following statements hold.
\begin{itemize}
\item[$(a)$] The solution set SOL($\mathcal{C}\cap\mathcal{N}_n$,$\mathcal{F}$) is convex.
\item[$(b)$] If there exists $\hat{x}(\cdot)\in\mathcal{C}\cap\mathcal{N}_n$ such that $\mathcal{F}(\hat{x}(\cdot))\in {\rm int}((\mathcal{C}\cap\mathcal{N}_n)_\infty)^*$,  then SOL($\mathcal{C}\cap\mathcal{N}_n$,$\mathcal{F}$) is nonempty, convex and compact.
\end{itemize}\end{theorem}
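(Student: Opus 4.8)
The plan is to reduce both statements to the deterministic Theorem \ref{F2} through the isometric isomorphism $\phi$ of Section \ref{iso}, exactly in the spirit of the proof of Theorem \ref{thm2}. Three preparatory facts are needed. First, $\phi(\mathcal{C}\cap\mathcal{N}_n)$ is a closed convex subset of $\mathbb{R}^{\bar n}$: indeed $\mathcal{C}\cap\mathcal{N}_n$ is closed and convex (intersection of the closed convex set $\mathcal{C}$ with the closed subspace $\mathcal{N}_n$), and $\phi$ is a linear isometric isomorphism by Lemma \ref{lem1}, hence in particular a homeomorphism, so Proposition \ref{prop1}(e) applies. Second, $\hat F$ is continuous and pseudomonotone on $\phi(\mathcal{C}\cap\mathcal{N}_n)$: continuity of $\hat F=\phi\circ\mathcal{F}\circ\phi^{-1}$ follows from continuity of $\mathcal{F},\phi,\phi^{-1}$, while pseudomonotonicity is transported by the identity $\left<\hat F(x),y-x\right>=\left<\mathcal{F}(x(\cdot)),y(\cdot)-x(\cdot)\right>$ already derived inside the proof of Lemma \ref{lem3}, which shows the defining implication of pseudomonotonicity for $\mathcal{F}$ on $\mathcal{C}\cap\mathcal{N}_n$ carries over verbatim to $\hat F$ on $\phi(\mathcal{C}\cap\mathcal{N}_n)$. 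Third, $\phi$ transports the relevant objects: solution sets by Lemma \ref{lem3}, and interiors, recession cones, dual cones and convexity/compactness by Proposition \ref{prop1}(b), (c), (e).

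For part (a), Lemma \ref{lem3} gives $\phi(\mathrm{SOL}(\mathcal{C}\cap\mathcal{N}_n,\mathcal{F}))=\mathrm{SOL}(\phi(\mathcal{C}\cap\mathcal{N}_n),\hat F)$. Applying Theorem \ref{F2}(a) to the deterministic problem $\mathrm{VI}(\phi(\mathcal{C}\cap\mathcal{N}_n),\hat F)$ shows the right-hand side is convex, and Proposition \ref{prop1}(e) pulls this convexity back through $\phi^{-1}$ to $\mathrm{SOL}(\mathcal{C}\cap\mathcal{N}_n,\mathcal{F})$.

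For part (b), set $\hat x=\phi(\hat x(\cdot))\in\phi(\mathcal{C}\cap\mathcal{N}_n)$. Applying $\phi$ to the hypothesis $\mathcal{F}(\hat x(\cdot))\in\mathrm{int}((\mathcal{C}\cap\mathcal{N}_n)_\infty)^*$ and chaining Proposition \ref{prop1}(c) (once for recession cones, once for dual cones) and Proposition \ref{prop1}(b) (for interiors), together with $\hat F(\hat x)=\phi(\mathcal{F}(\hat x(\cdot)))$, yields $\hat F(\hat x)\in\mathrm{int}\big((\phi(\mathcal{C}\cap\mathcal{N}_n))_\infty\big)^*$. Theorem \ref{F2}(b) then gives that $\mathrm{SOL}(\phi(\mathcal{C}\cap\mathcal{N}_n),\hat F)$ is nonempty, convex and compact, and transferring back via Lemma \ref{lem3} and Proposition \ref{prop1}(e) completes the argument.

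The only genuinely delicate point I anticipate is the bookkeeping in part (b): one must apply the identities of Proposition \ref{prop1} to the correct sets and in the correct order, since $\mathrm{int}((\cdot)_\infty)^*$ is a composition of three operations and a misordered step would silently change the set. A secondary matter, worth stating once explicitly, is that $\hat F$ inherits pseudomonotonicity on $\phi(\mathcal{C}\cap\mathcal{N}_n)$ from $\mathcal{F}$; this is the same observation used in Theorem \ref{thm2}, and once it is recorded, everything else is a routine translation through the isomorphism.
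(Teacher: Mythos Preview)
Your proposal is correct and follows essentially the same approach as the paper: transport the problem to $\mathbb{R}^{\bar n}$ via the isomorphism $\phi$, invoke Theorem~\ref{F2} on the deterministic VI$(\phi(\mathcal{C}\cap\mathcal{N}_n),\hat F)$, and pull the conclusions back through Lemma~\ref{lem3} and Proposition~\ref{prop1}. Your write-up is in fact somewhat more careful than the paper's in spelling out which parts of Proposition~\ref{prop1} are used at each step, and in noting that continuity and closed-convexity also transfer along $\phi$.
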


\begin{proof}
$``(a)"$ Obviously, if $\mathcal{F}$ is pseudomonotone on $\mathcal{C}\cap\mathcal{N}_n$, then $\hat{F}$ is also pseudomonotone on $\phi(\mathcal{C}\cap\mathcal{N}_n)$. According to Theorem \ref{F2}, SOL($\phi(\mathcal{C}\cap\mathcal{N}_n)$,$\hat{F}$) is convex. Then SOL($\mathcal{C}\cap\mathcal{N}_n$,$\mathcal{F}$) is convex by Lemma \ref{lem3} and Proposition \ref{prop1}.

$``(b)"$ If $\mathcal{F}(\hat{x}(\cdot))\in {\rm int}((\mathcal{C}\cap\mathcal{N}_n)_\infty)^*$, then $\phi(\mathcal{F}(\hat{x}(\cdot)))\in\phi({\rm int}((\mathcal{C}\cap\mathcal{N}_n)_\infty)^*)$. Let $\hat{x}=\phi(\hat{x}(\cdot))$. Due to the definition of $\hat{F}$ and Proposition \ref{prop1}, we have $\hat{F}(\hat{x})\in {\rm int}((\phi(\mathcal{C}\cap\mathcal{N}_n))_\infty)^*$. Then the fact that SOL($\phi(\mathcal{C}\cap\mathcal{N}_n)$,$\hat{F}$) is nonempty, convex and compact follows from \ref{F2}. It follows from Proposition \ref{prop1} that SOL($\mathcal{C}\cap\mathcal{N}_n$,$\mathcal{F}$) is nonempty, convex and compact.
\end{proof}

\begin{theorem}\label{thm4}
Let $\mathcal{F}(x(\cdot)):\mathcal{L}_{n}\rightarrow\mathcal{L}_n$ be continuous. Assume that $\mathcal{F}(x(\cdot))$ is pseudomonotone on $\mathcal{C}\cap\mathcal{N}_n$. Then, the set SOL($\mathcal{C}\cap\mathcal{N}_n$,$\mathcal{F}$) is nonempty and bounded if and only if
\begin{equation}\label{eqthm4}
(\mathcal{C}\cap\mathcal{N}_n)_\infty\cap[-(\mathcal{F}(\mathcal{C}\cap\mathcal{N}_n)^*)]=\{0\}.
\end{equation}
\end{theorem}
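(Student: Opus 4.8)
The plan is to reduce Theorem \ref{thm4} to its deterministic counterpart, Theorem \ref{F3}, via the isomorphism $\phi$ of Section \ref{iso}, exactly as was done for Theorems \ref{thm2} and \ref{thm3}. First I would observe that pseudomonotonicity of $\mathcal{F}$ on $\mathcal{C}\cap\mathcal{N}_n$ transfers to pseudomonotonicity of $\hat F$ on $\phi(\mathcal{C}\cap\mathcal{N}_n)$, and that $\hat F$ is continuous, so that Theorem \ref{F3} applies to VI($\phi(\mathcal{C}\cap\mathcal{N}_n)$,$\hat F$). By Lemma \ref{lem3}, SOL($\mathcal{C}\cap\mathcal{N}_n$,$\mathcal{F}$) is in one-to-one correspondence with SOL($\phi(\mathcal{C}\cap\mathcal{N}_n)$,$\hat F$), and by Proposition \ref{prop1}(e) nonemptiness and boundedness are preserved under $\phi$ (boundedness because $\phi$ is an isometry, by Lemma \ref{lem1}). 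Hence SOL($\mathcal{C}\cap\mathcal{N}_n$,$\mathcal{F}$) is nonempty and bounded if and only if SOL($\phi(\mathcal{C}\cap\mathcal{N}_n)$,$\hat F$) is, and by Theorem \ref{F3} the latter holds if and only if
\begin{equation*}
(\phi(\mathcal{C}\cap\mathcal{N}_n))_\infty\cap[-((\hat F(\phi(\mathcal{C}\cap\mathcal{N}_n)))^*)]=\{0\}.
\end{equation*}

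It then remains to show that this last condition is equivalent to \eqref{eqthm4}. Here I would invoke Proposition \ref{prop1}: part (c) gives $\phi((\mathcal{C}\cap\mathcal{N}_n)_\infty)=(\phi(\mathcal{C}\cap\mathcal{N}_n))_\infty$ and $\phi(A^*)=\phi(A)^*$ for any $A\subset\mathcal{L}_n$. The key bookkeeping point is that $\hat F(\phi(x(\cdot)))=\phi(\mathcal{F}(x(\cdot)))$ for all $x(\cdot)$, so that $\hat F(\phi(\mathcal{C}\cap\mathcal{N}_n))=\phi(\mathcal{F}(\mathcal{C}\cap\mathcal{N}_n))$ as subsets of $\mathbb{R}^{\bar n}$. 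Combining these, $-((\hat F(\phi(\mathcal{C}\cap\mathcal{N}_n)))^*)=-\phi((\mathcal{F}(\mathcal{C}\cap\mathcal{N}_n))^*)=\phi(-(\mathcal{F}(\mathcal{C}\cap\mathcal{N}_n))^*)$, using linearity (Proposition \ref{prop1}(d)). Finally, Proposition \ref{prop1}(a) lets me push $\phi$ through the intersection, and since $\phi$ is a bijection with $\phi(0)=0$, the image equals $\{0\}$ if and only if the preimage equals $\{0\}$. This chain of equalities shows that the deterministic recession condition pulls back precisely to \eqref{eqthm4}, completing the proof.

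I do not anticipate a serious obstacle: the whole argument is a transfer through the isometric isomorphism, and every structural identity needed has already been recorded in Proposition \ref{prop1} and Lemma \ref{lem3}. The one place to be slightly careful is the dual cone identity $\phi(A)^*=\phi(A^*)$ when $A=\mathcal{F}(\mathcal{C}\cap\mathcal{N}_n)$ is merely a set (not necessarily a cone or convex), but the definition of dual cone and the fact that $\langle\phi(u(\cdot)),\phi(d(\cdot))\rangle=\langle u(\cdot),d(\cdot)\rangle$ make this immediate, so it is covered by Proposition \ref{prop1}(c) as stated. I would also note explicitly, for the reader, that $\phi$ being linear and bijective is what allows one to intersect the recession cone with the negative dual cone image and conclude triviality of the intersection on one side from the other; this is the only spot where I would spell out that $\phi^{-1}(\{0\})=\{0\}$.
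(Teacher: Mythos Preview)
Your proposal is correct and follows essentially the same route as the paper's own proof: transfer the problem to $\mathbb{R}^{\bar n}$ via $\phi$, invoke Theorem \ref{F3} there, and then pull the recession/dual-cone condition back through Proposition \ref{prop1} and Lemma \ref{lem3}. Your write-up is in fact more explicit than the paper's about why $\hat F(\phi(\mathcal{C}\cap\mathcal{N}_n))=\phi(\mathcal{F}(\mathcal{C}\cap\mathcal{N}_n))$ and why $\phi^{-1}(\{0\})=\{0\}$ suffices to conclude, which is helpful.
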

\begin{proof}
By Lemma \ref{lem3} and Proposition \ref{prop1}, SOL($\mathcal{C}\cap\mathcal{N}_n$,$\mathcal{F}$) is nonempty and bounded if and only if SOL($\phi(\mathcal{C}\cap\mathcal{N}_n)$,$\hat{F}$) is nonempty and bounded. On the other hand, if $\mathcal{F}$ is pseudomonotone on $\mathcal{C}\cap\mathcal{N}_n$, then $\hat{F}$ is pseudomonotone on $\phi(\mathcal{C}\cap\mathcal{N}_n)$. Based on Theorem \ref{F3}, SOL($\phi(\mathcal{C}\cap\mathcal{N}_n)$,$\hat{F}$) is nonempty and bounded if and only if ${(\phi(\mathcal{C}\cap\mathcal{N}_n))}_\infty\cap([-\hat{F}(\phi(\mathcal{C}\cap\mathcal{N}_n))^*])=\{0\}$, as is equivalent to (\ref{eqthm4}) due to Proposition \ref{prop1}. Thus we complete the proof.
\end{proof}

\section{Finding solutions to SVI via PHA} \label{pha}
\subsection{{ Description of elicited PHA}}
As is presented in Remark \ref{rem1}, the monotone mapping $\mathcal{F}$ must be pseudomonotone, but the converse is not necessarily true. Thus the original PHA in \cite{Rockafellar2019} can not be applied to the pseudomonotone SVI (\ref{eq1}) directly. Instead,  we employ the elicited PHA proposed in  \cite{Zhang2019} to solve the pseudomonotone SVI (\ref{eq1}), which is motivated by the work of Rockafellar in \cite{Rockafellar2018elicited}.

 We first introduce the concept of { (global) maximal monotonicity in \cite{Rockafellar1998}} and (global) elicited monotonicity in \cite{Rockafellar2018elicited,Zhang2019}.
{
\begin{definition}
 Denote the graph of set-valued mapping $T:H \rightrightarrows H$ by $gphT=\{(x,y):y\in T(x)\}$. Then a monotone mapping $T:H \rightrightarrows H$ is maximal monotone if no enlargement of $gphT$ is possible in $H\times H$ without destroying monotonicity, or in other words, if for every pair $(\hat{x},\hat{y})\in(H\times H)\backslash gphT$ there exists $(\tilde{x},\tilde{y})\in gphT$ with $\left<\hat{y}-\tilde{y},\hat{x}-\tilde{x}\right><0$, where $H$ stands for the Hilbert space.
\end{definition}}

\begin{definition}\label{def2}
 Let $\mathcal{F}:\mathcal{L}_n \to\mathcal{L}_n$. Given closed convex set $\mathcal{C}$, subspace $\mathcal{N}_n$ and its complement $\mathcal{M}_n$, the monotonicity of $\mathcal{F}+N_{\mathcal{C}}$ is said to be elicited at level $s>0$ (with respect to $\mathcal{N}_n$))
if $\mathcal{F}+N_{\mathcal{C}}+sP_{\mathcal{M}_n}$ is maximal monotone globally.
\end{definition}

Despite the fact that $\mathcal{F}+N_{\mathcal{C}}$ may be not maximal monotone, $\mathcal{F}+N_{\mathcal{C}}+sP_{\mathcal{M}_n}$ may be maximal monotone for some $s>0$. So we can apply the PPA to $\mathcal{F}+N_{\mathcal{C}}+sP_{\mathcal{M}_n}$, which is the core idea of the elicited PHA. We refer readers to \cite{Rockafellar2018elicited,Zhang2019} for more details about the elicited PHA.

We intend to use the elicited PHA to solve the pseudomonotone multistage SVIs. { The algorithmic framework and the convergence analysis of the elicited PHA in \cite{Zhang2019} are listed below for completeness.}

\begin{algorithm}\label{alg1}
\caption{Elicited PHA for multistage pseudomonotone SVIs}
\begin{algorithmic}
\STATE{ {\bf Initialization.} Given parameter $r>s\geq0$, $x^0(\cdot)\in{\cal N}_n$ and $w^0(\cdot)\in{\cal M}_n$. Set $k=0$.}
\STATE{ {\bf Iterations.}
  \begin{itemize}
    \item[]{\bf Step 1} For each $\xi\in\Xi$, find $\hat{x}^k(\xi):=$ the unique $x(\xi)$ such that
    \begin{equation}\label{eqalg1}
    -F(x(\xi),\xi)-w^k(\xi)-r[x(\xi)-x^k(\xi)]\in N_{\mathcal{C}(\xi)}(x(\xi)).
    \end{equation}
    \item[] {\bf Step 2} {\bf (primal update).} $x^{k+1}(\cdot)=P_{\mathcal{N}_n}({\hat{x}}^k(\cdot))$.
    \item[] {\bf Step 3} {\bf (dual update).} $w^{k+1}(\cdot)=w^k(\cdot)+(r-s)({\hat{x}}^k(\cdot)-x^{k+1}(\cdot))$.
  \end{itemize}}
\STATE{Set $k:=k+1$; repeat until a stopping criterion is met.}
\end{algorithmic}
\end{algorithm}

\begin{theorem}
 Suppose that $\mathcal{F}+N_{\mathcal{C}}$ in SVI (\ref{eq1}) is globally elicited at level $s$ and the constraint qualification holds. As long as SVI (\ref{eq1}) has a solution, the sequence $\{x^k(\cdot),\omega^k(\cdot)\}$ generated by Algorithm \ref{alg1} will converge to some pair $(\bar{x}(\cdot),\bar{\omega}(\cdot))$ satisfying (\ref{eq2}) and thus furnish $\bar{x}(\cdot)$ as a solution to (\ref{eq1}). In the special case that $\mathcal{F}$ is linear and $\mathcal{C}$ is polyhedral, the convergence will surely be at a linear rate with respect to the norm
\begin{equation*}
{\Vert(x(\cdot),\omega(\cdot))\Vert}^2_{r,s}={\Vert x(\cdot)\Vert}^2+\frac{1}{r(r-s)}{\Vert\omega(\cdot)\Vert}^2.
\end{equation*}
\end{theorem}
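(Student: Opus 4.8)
The plan is to recognize Algorithm \ref{alg1} as an exact instance of the proximal point algorithm (PPA), in the metric induced by $\|\cdot\|_{r,s}$, applied to a single \emph{globally} maximal monotone operator manufactured from $\mathcal{F}+N_{\mathcal{C}}$, and then to appeal to the classical convergence theory for the PPA. First I would reduce the problem: under the constraint qualification, Theorem \ref{thm1} gives that $\bar{x}(\cdot)$ solves (\ref{eq1}) if and only if there is some $\bar{w}(\cdot)\in\mathcal{M}_n$ with $(\bar{x}(\cdot),\bar{w}(\cdot))$ solving the extensive form (\ref{eq2}), i.e. $-\bar{w}(\cdot)\in(\mathcal{F}+N_{\mathcal{C}})(\bar{x}(\cdot))$ with $\bar{x}(\cdot)\in\mathcal{N}_n$ and $\bar{w}(\cdot)\in\mathcal{M}_n$. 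Hence the standing hypothesis that (\ref{eq1}) has a solution guarantees that the set of such pairs, which is precisely the zero set of the operator we are about to build, is nonempty.

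Next, set $T=\mathcal{F}+N_{\mathcal{C}}$ on $\mathcal{L}_n$. By Definition \ref{def2} the elicitation hypothesis says $T_s:=T+sP_{\mathcal{M}_n}$ is globally maximal monotone. Using the orthogonal decomposition $\mathcal{L}_n=\mathcal{N}_n\oplus\mathcal{M}_n$ and the parameter choice $r>s\ge0$, one verifies that a single pass of Steps 1--3 carries $(x^k(\cdot),w^k(\cdot))$ to $(x^{k+1}(\cdot),w^{k+1}(\cdot))$ exactly as the resolvent of a maximal monotone operator $\mathcal{S}$ on $\mathcal{N}_n\times\mathcal{M}_n$ whose zeros are the solution pairs of (\ref{eq2}): the scenariowise inclusion (\ref{eqalg1}) is the decoupled form of evaluating $(T+r\,\mathrm{Id})^{-1}$ once the coupling $sP_{\mathcal{M}_n}$ has been absorbed into the multiplier $w^k(\cdot)$ and the proximal term $r[x(\cdot)-x^k(\cdot)]$, while Steps 2 and 3 effect the primal projection onto $\mathcal{N}_n$ and the dual update. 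Both the maximal monotonicity of $\mathcal{S}$ and the unique solvability of (\ref{eqalg1}) follow from that of $T_s$ together with $r>s$, which renders the operator being inverted strongly monotone. This identification is exactly the construction of the elicited PHA in \cite{Zhang2019,SZ2021}, a specialization of Rockafellar's progressive decoupling algorithm \cite{Rockafellar2019}; at this point its convergence theorem applies. Since $\mathcal{S}$ is maximal monotone and has a zero, the PPA iterates $(x^k(\cdot),w^k(\cdot))$ converge (in $\|\cdot\|_{r,s}$, hence in the usual norm, $\mathcal{L}_n$ being finite dimensional) to some $(\bar{x}(\cdot),\bar{w}(\cdot))$ with $0\in\mathcal{S}(\bar{x}(\cdot),\bar{w}(\cdot))$, i.e. solving (\ref{eq2}); by the first step, $\bar{x}(\cdot)$ solves (\ref{eq1}).

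Finally, for the linear rate, suppose $\mathcal{F}$ is linear and $\mathcal{C}$ is polyhedral. Then $T_s=\mathcal{F}+N_{\mathcal{C}}+sP_{\mathcal{M}_n}$ is a polyhedral (piecewise affine) maximal monotone operator, so $\mathcal{S}$ and $\mathcal{S}^{-1}$ are polyhedral multifunctions and therefore locally Lipschitz by Robinson's theorem. This provides the metric subregularity of $\mathcal{S}$ at $(\bar{x}(\cdot),\bar{w}(\cdot),0)$ required to upgrade plain convergence of the PPA to a linear rate, via Rockafellar's linear-rate criterion for the proximal point algorithm \cite{r1976} (equivalently Luque's theorem); the norm in which the rate is stated is exactly $\|\cdot\|_{r,s}$ because that is the metric in which the PPA step is firmly nonexpansive.

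The hard part is the middle step: establishing the precise algebraic correspondence between the decoupled, scenariowise updates (\ref{eqalg1})--Step 3 and one resolvent evaluation of a genuinely maximal monotone operator in the $\|\cdot\|_{r,s}$ metric, and in particular making explicit that the elicitation level $s$, combined with $r>s$, is simultaneously what restores global maximal monotonicity and what makes the subproblem (\ref{eqalg1}) uniquely solvable. Once that correspondence is in place, the remainder is a direct invocation of the established PPA convergence theory for the global assertion and of the polyhedrality argument for the linear rate.
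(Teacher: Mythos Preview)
The paper does not supply its own proof of this theorem; it is quoted verbatim ``for completeness'' from \cite{Zhang2019} (with the linear-rate statement coming from \cite{SZ2021,Rockafellar2018elicited}). Your proposal is the correct argument and is exactly the route taken in those references: recast Algorithm~\ref{alg1} as the proximal point algorithm, in the rescaled norm $\|\cdot\|_{r,s}$, for an operator on $\mathcal{N}_n\times\mathcal{M}_n$ whose maximal monotonicity is inherited from that of $T_s=\mathcal{F}+N_{\mathcal{C}}+sP_{\mathcal{M}_n}$, invoke Theorem~\ref{thm1} to identify its zeros with solution pairs of (\ref{eq2}), and then apply the classical PPA convergence theorem \cite{r1976}; the linear rate in the linear/polyhedral case follows from Robinson--Luque-type polyhedrality arguments exactly as you outline.
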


\subsection{Some criteria for the elicited monotonicity}
In this subsection, we will provide some criteria for the elicited monotonicity of $\mathcal{F}+N_{\mathcal{C}}$ in SVI (\ref{eq1}). At first, we provide one useful fact about the eigenvalues of the Jacobian of pseudomonotone function.

\begin{lemma}\label{lem7}
Let $F:S\rightarrow \mathbb{R}^{m}$ be a continuously differentiable function defined on an open convex set $S\subset \mathbb{R}^m$. Assume that $F$ is pseudomonotone on $S$ Then the Jacobian $DF(x)$  has at most one negative eigenvalue if it is symmetric, where an eigenvalue with multiplicity $k\ (k>1)$ is counted as $k$ eigenvalues.
\end{lemma}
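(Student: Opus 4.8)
The plan is to derive Lemma \ref{lem7} as an immediate consequence of Lemma \ref{lem4}(b), the only difference being that here we additionally assume $DF(x)$ is symmetric, which is harmless: Lemma \ref{lem4}(b) already asserts that for a pseudomonotone continuously differentiable $F$ on an open convex set, the Jacobian $DF(x)$ has at most one negative eigenvalue at every $x\in S$, counting multiplicities. So in principle the statement is just a restriction of Lemma \ref{lem4}(b). However, I suspect the authors want a self-contained argument for the symmetric case that does not quote the eigenvalue-counting subtlety of Lemma \ref{lem4}(b) but instead rederives it cleanly from Lemma \ref{lem4}(a), since symmetry makes the quadratic-form condition in (\ref{eqlem4}) directly controllable.

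The concrete approach I would take: fix $x\in S$ and suppose, for contradiction, that the symmetric matrix $A:=DF(x)$ has at least two negative eigenvalues (counted with multiplicity). By the spectral theorem there is an orthonormal pair $u_1,u_2$ of eigenvectors with eigenvalues $\lambda_1,\lambda_2<0$. Consider the two-dimensional subspace $V=\mathrm{span}\{u_1,u_2\}$; on $V$ the quadratic form $u\mapsto\langle u,Au\rangle$ is negative definite. Now I need a nonzero $u\in V$ with $\langle u,F(x)\rangle=0$: since $V$ is two-dimensional, the linear functional $u\mapsto\langle u,F(x)\rangle$ restricted to $V$ has a kernel of dimension at least one, so such a $u\neq 0$ exists. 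Then $\langle u,Au\rangle<0$, contradicting Lemma \ref{lem4}(a), i.e., (\ref{eqlem4}). Hence $A$ has at most one negative eigenvalue. This handles the case $F(x)\neq 0$ needed for the functional argument; if $F(x)=0$ the functional is identically zero and any $u\in V$ works, so the argument goes through a fortiori.

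The only genuinely delicate point — the "main obstacle" — is the multiplicity bookkeeping: I must make sure the contradiction uses \emph{two} dimensions' worth of negativity, because a single negative eigenvalue is allowed. Working with the subspace $V$ spanned by two eigenvectors associated with negative eigenvalues (or one eigenvector of a multiplicity-$\ge 2$ eigenvalue together with a second independent one) is exactly what guarantees $\dim V=2$, hence a nonzero vector in $V\cap F(x)^{\perp}$, hence a true violation of (\ref{eqlem4}). I would phrase this carefully so that "at most one negative eigenvalue, counting multiplicity" is literally what comes out. As a shortcut one may simply invoke Lemma \ref{lem4}(b) directly and remark that symmetry is an extra hypothesis that only strengthens the setting; I would include the short spectral argument above as well, since it makes the paper's subsequent criteria (which rely on symmetry) transparent and self-contained.
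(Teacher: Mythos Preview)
Your argument is correct. Both the shortcut (just citing Lemma~\ref{lem4}(b)) and your spectral argument via Lemma~\ref{lem4}(a) are valid, and the latter is a clean self-contained proof: a two-dimensional negative eigenspace of the symmetric $DF(x)$ must intersect the hyperplane $F(x)^{\perp}$ nontrivially, producing a vector that violates~(\ref{eqlem4}).

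The paper, however, takes a genuinely different route. Rather than working with eigenspaces or invoking Lemma~\ref{lem4}(b), it exploits the symmetry hypothesis to pass through the theory of pseudoconvex \emph{functions}: since $DF(x)$ is symmetric, \cite[Theorem~1.3.1]{Facchinei2003} furnishes a potential $\theta$ with $\nabla\theta=F$; pseudomonotonicity of $F$ then corresponds to pseudoconvexity of $\theta$, and the conclusion follows from results in \cite{Cambini2009} stating that the Hessian of a pseudoconvex function has at most one negative eigenvalue. What the paper's approach buys is an explanation of \emph{why} symmetry is singled out in Lemma~\ref{lem7} at all (it is the integrability condition, cf.\ Remark~\ref{lem7-1}), and it ties the result to the classical scalar theory. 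What your approach buys is a shorter, fully self-contained derivation that stays inside the VI framework and uses only Lemma~\ref{lem4}(a), without appealing to additional external theorems on pseudoconvex functions.
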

\begin{proof}
According to \cite[Theorem 1.3.1]{Facchinei2003}, there is a real-valued function $\theta: \mathbb{R}^{m}\rightarrow \mathbb{R}$ such that $\nabla\theta(x)=F(x)$ for $x\in S$. By \cite[Theorems 3.4.1, 5.5.2]{Cambini2009}, the Hessian matrix $\nabla^{2}\theta(x)$ has at most one negative eigenvalue. So $DF(x)\equiv\nabla^{2}\theta(x)$ has at most one negative eigenvalue.
\end{proof}

\begin{remark}\label{lem7-1}
The condition ``$DF(x)$ is symmetric" in Lemma \ref{lem7} is called \emph{the symmetry condition}. Actually, the symmetry condition holds true if and only if there is a real-valued differentiable function $\theta(x)$ such that $\nabla\theta(x)=F(x)$ for all $x\in S$, which is equivalent to \emph{the integrability condition} that $F$ is integrable on $S$. For more details, refer to \cite[Theorem 1.3.1]{Facchinei2003}.

There are two important examples where $DF(x)$ is symmetric. One is the separable function, i.e., $F(x)=(F_{i}(x_{i}):i=1,2,\ldots,m)$, where $F_{i}(x_{i})$, the component function of $F(x)$, only depends on the single variable $x_{i}$. If $F(x)$ is separable and differentiable, then Jacobian $DF(x)$ is diagonal for all $x\in S$. The other one is the linear function $F(x)=Mx+q$ with $M\in \mathbb{R}^{m\times m}$ being symmetric and $q\in \mathbb{R}^m$. In this case, $DF(x)=M$ is symmetric for $x\in S$.
\end{remark}

{The first criterion is based on \cite[Theorem 6]{Rockafellar2018elicited}, which is shown in the theorem below.
\begin{theorem}\label{thmelicit}
Let $H$ be the Hilbert space, and $N_s$ and $M_s$ be the orthogonal subspaces of $H$. Denote $T=F+N_{C}$ for a nonempty closed convex subset $C\subset H$ and a continuously differentiable mapping $F:C\rightarrow H$ with Jacobians $DF(x)$. Assume that there exists $\alpha(x)$ such that when $x\in C$
\begin{equation*}
	\left<y,DF(x)y\right>\geq\alpha(x){\Vert y\Vert}^2,\quad \forall\ y\in N_s.
\end{equation*}
Let
\begin{equation*}
	\beta(x)=\frac{1}{2}\Vert P_{N_s}(DF(x)+(DF(x))^T)P_{M_s}\Vert,\quad
	\gamma(x)=\Vert P_{M_s}DF(x)P_{M_s}\Vert.
\end{equation*}
Define
\begin{equation*}
	\tilde{e}=\sup\limits_{x\in C\cap N_s}\left\{\frac{\beta^2(x)}{\alpha(x)}+\gamma(x)\right\}.
\end{equation*}
If $\tilde{e}<+\infty$, then the monotonicity of $\mathcal{F}+N_{\mathcal{C}}$ is globally elicited at level $s>\tilde{e}$ with respect to $N_s$.
\end{theorem}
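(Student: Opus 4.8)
The plan is to verify the sufficient condition for global elicitation directly from Definition \ref{def2}: for each $s>\tilde e$ I must show that $F+N_{C}+sP_{M_s}$ is maximal monotone on $H$. I would split this into a maximality step and a monotonicity step. For the maximality step, recall that $N_{C}$ is maximal monotone, and that adding to a maximal monotone operator a single-valued operator that is continuous and monotone on the whole of its domain $C=\operatorname{dom}N_{C}$ again produces a maximal monotone operator (the classical sum result that already underlies the PHA; see \cite{Rockafellar2018elicited,Rockafellar1998}). Since $F$ is continuously differentiable, hence continuous, on $C$ and $P_{M_s}$ is a bounded linear operator, $F+sP_{M_s}$ is continuous on $C$; it therefore only remains to prove that $F+sP_{M_s}$ is \emph{monotone} on $C$, after which the maximal monotonicity of $F+N_{C}+sP_{M_s}$ — i.e.\ the elicitability claimed in the theorem — follows.

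For the monotonicity step I would use the first-order criterion: for a $C^1$ mapping it is enough, upon integrating the Jacobian along segments, that $\langle y,(DF(x)+sP_{M_s})y\rangle\ge 0$ for all $x\in C$ and all $y\in H$. Fix $x$, set $A=DF(x)$, and decompose $y=u+v$ with $u=P_{N_s}y$ and $v=P_{M_s}y$, so that $\langle y,sP_{M_s}y\rangle=s\|v\|^2$. Expanding $\langle y,Ay\rangle$ into its four blocks and using that $P_{N_s}$ and $P_{M_s}$ are self-adjoint: the $N_s$-block $\langle u,Au\rangle$ is bounded below by $\alpha(x)\|u\|^2$ by hypothesis, the two mixed terms combine into $\langle u,P_{N_s}(A+A^{T})P_{M_s}v\rangle$ and so are bounded in modulus by $2\beta(x)\|u\|\,\|v\|$, and the $M_s$-block $\langle v,P_{M_s}AP_{M_s}v\rangle$ is bounded below by $-\gamma(x)\|v\|^2$. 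Hence
\begin{equation*}
\langle y,(DF(x)+sP_{M_s})y\rangle\ \ge\ \alpha(x)\|u\|^2-2\beta(x)\|u\|\,\|v\|+(s-\gamma(x))\|v\|^2 ,
\end{equation*}
a quadratic form in $(\|u\|,\|v\|)$ whose matrix $\left[\begin{smallmatrix}\alpha(x)&-\beta(x)\\-\beta(x)&s-\gamma(x)\end{smallmatrix}\right]$ is positive semidefinite provided $\alpha(x)>0$ and $\alpha(x)(s-\gamma(x))\ge\beta^2(x)$, that is, $s\ge\gamma(x)+\beta^2(x)/\alpha(x)$. Since $\gamma(x)+\beta^2(x)/\alpha(x)\le\tilde e$ for every $x$ entering the supremum, any $s>\tilde e$ forces $DF(x)+sP_{M_s}$ to be positive semidefinite, hence $F+sP_{M_s}$ monotone, which closes the argument.

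The step I expect to demand the most care is the maximality-of-the-sum reduction: one must make sure the continuous monotone operator $F+sP_{M_s}$ is genuinely defined on all of $C=\operatorname{dom}N_{C}$ so that no constraint qualification is needed, and that the hemicontinuity along segments required by the classical result is supplied by $C^1$ regularity. The other delicate point is bookkeeping: the estimate and the $2\times 2$ matrix argument are only meaningful where $\alpha(x)>0$, so one has to keep straight the set of $x$ on which $\langle y,DF(x)y\rangle\ge\alpha(x)\|y\|^2$ must hold versus the index set in the definition of $\tilde e$ — positivity of $\alpha$ there is exactly what the hypothesis is meant to provide. Everything else is block-matrix algebra together with the operator-norm definitions of $\beta(x)$ and $\gamma(x)$.
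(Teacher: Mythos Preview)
The paper does not supply its own proof of this theorem: it is quoted verbatim as \cite[Theorem~6]{Rockafellar2018elicited} and then used as a black box to derive Theorem~\ref{thm5} via the isomorphism $\phi$. So there is nothing in the paper to compare your argument to, beyond the citation.

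That said, your direct argument has a genuine gap, and it is precisely the ``bookkeeping'' issue you flag at the end but do not resolve. Your first-order criterion requires $\langle y,(DF(x)+sP_{M_s})y\rangle\ge 0$ for \emph{every} $x\in C$ (because you integrate the Jacobian along the segment joining two arbitrary points of $C$). Your $2\times 2$ minorant then needs $s\ge \beta^2(x)/\alpha(x)+\gamma(x)$ at every such $x$. But $\tilde e$ is defined as the supremum of $\beta^2(x)/\alpha(x)+\gamma(x)$ only over $x\in C\cap N_s$, not over all of $C$. Nothing in the hypotheses prevents this quantity from exceeding $\tilde e$ at some $x_0\in C\setminus N_s$, in which case $s>\tilde e$ does not force $DF(x_0)+sP_{M_s}$ to be positive semidefinite, and your monotonicity conclusion for $F+sP_{M_s}$ on $C$ collapses. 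The positivity of $\alpha(x)$ that you invoke is orthogonal to this difficulty: it makes the $2\times 2$ test well-posed, but it does not reconcile the domain $C$ needed for the segment argument with the smaller index set $C\cap N_s$ used in the definition of $\tilde e$.

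This mismatch is not an oversight in the statement; it is the whole point of Rockafellar's elicitation framework that only the behaviour along $C\cap N_s$ should matter. The proof in \cite{Rockafellar2018elicited} does not proceed by showing that $F+sP_{M_s}$ is globally monotone on $C$; it exploits the interaction between the subspace $N_s$ and the operator $T+sP_{M_s}$ in a more structural way (via the ``linkage'' decomposition underlying progressive decoupling). If you want a self-contained argument you will either have to reproduce that mechanism, or strengthen the hypothesis so that the supremum is taken over all of $C$ --- which is a different, strictly stronger theorem than the one stated.
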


Similarly, the isomorphism in Section \ref{iso} enables us to extend Theorem \ref{thmelicit} to pseudomonotone SVI (\ref{eq1}).}

\begin{theorem}\label{thm5}
Consider SVI (\ref{eq1}). Let $\mathcal{F}:\mathcal{C}\rightarrow\mathcal{L}_n$ be continuously differentiable and pseudomonotone on $\mathcal{C}$. Let $\hat{F}$ be defined in (\ref{eq2.5}). Assume that there exists $\alpha(x)$ such that when $x\in\phi(\mathcal{C})$
\begin{equation} \label{eq4}
\left<y,D{\hat{F}}(x)y\right>\geq\alpha(x){\Vert y\Vert}^2,\quad \forall\ y\in\phi(\mathcal{N}_n).
\end{equation}
 Let
\begin{equation*}
\beta(x)=\frac{1}{2}\Vert P_{\phi(\mathcal{N}_n)}(D\hat{F}(x)+(D\hat{F}(x))^T)P_{\phi(\mathcal{M}_n)}\Vert,\quad
\gamma(x)=\Vert P_{\phi(\mathcal{M}_n)}D\hat{F}(x)P_{\phi(\mathcal{M}_n)}\Vert.
\end{equation*}
Define
\begin{equation*}
e_0=\sup\limits_{x\in\phi(\mathcal{C})\cap\phi(\mathcal{N}_n)}\left\{\frac{\beta^2(x)}{\alpha(x)}+\gamma(x)\right\}.
\end{equation*}
If $e_0<+\infty$, then the monotonicity of $\mathcal{F}+N_{\mathcal{C}}$ is globally elicited at level $s>e_0$.
\end{theorem}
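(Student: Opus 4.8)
The plan is to reduce the assertion to Theorem \ref{thmelicit} by transporting the whole problem from the Hilbert space $\mathcal{L}_n$ to the Euclidean space $\mathbb{R}^{\bar{n}}$ through the isometric isomorphism $\phi$ of Section \ref{iso}, invoking Theorem \ref{thmelicit} there, and then transporting the resulting maximal monotonicity back to $\mathcal{L}_n$.

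First I would take $H=\mathbb{R}^{\bar{n}}$, $C=\phi(\mathcal{C})$, $F=\hat{F}$, $N_s=\phi(\mathcal{N}_n)$ and $M_s=\phi(\mathcal{M}_n)$, and check that these data meet the hypotheses of Theorem \ref{thmelicit}. By Lemma \ref{lem1} the map $\phi$ is a linear isometric isomorphism, so $\phi(\mathcal{N}_n)$ and $\phi(\mathcal{M}_n)$ are orthogonal subspaces of $\mathbb{R}^{\bar{n}}$ with $\phi(\mathcal{N}_n)\oplus\phi(\mathcal{M}_n)=\mathbb{R}^{\bar{n}}$, and by Proposition \ref{prop1}(e) the set $\phi(\mathcal{C})$ is nonempty closed convex. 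Since $\hat{F}$ is obtained from $\mathcal{F}$ by the blockwise rescaling (\ref{eq2.5}), it is continuously differentiable on $\phi(\mathcal{C})$ (and it inherits pseudomonotonicity from $\mathcal{F}$ via the identity $\left<\hat{F}(x),y-x\right>=\left<\mathcal{F}(x(\cdot)),y(\cdot)-x(\cdot)\right>$ used in the proof of Lemma \ref{lem3}). With these identifications, hypothesis (\ref{eq4}) is exactly the $\alpha(x)$-inequality required in Theorem \ref{thmelicit}, the functions $\beta(x),\gamma(x)$ coincide with those there, and, since $\phi(\mathcal{C})\cap\phi(\mathcal{N}_n)=\phi(\mathcal{C}\cap\mathcal{N}_n)$ by Proposition \ref{prop1}(a), the constant $e_0$ coincides with the $\tilde{e}$ of Theorem \ref{thmelicit}. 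Hence, assuming $e_0<+\infty$, Theorem \ref{thmelicit} yields that the operator $\hat{F}+N_{\phi(\mathcal{C})}+sP_{\phi(\mathcal{M}_n)}$ is globally maximal monotone on $\mathbb{R}^{\bar{n}}$ for every $s>e_0$.

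It then remains to pull this back to $\mathcal{L}_n$. Using $\phi(\mathcal{F}(x(\cdot)))=\hat{F}(\phi(x(\cdot)))$ together with parts (d), (f) and (g) of Proposition \ref{prop1}, I would establish, summand by summand, the operator identity $\phi\circ(\mathcal{F}+N_{\mathcal{C}}+sP_{\mathcal{M}_n})\circ\phi^{-1}=\hat{F}+N_{\phi(\mathcal{C})}+sP_{\phi(\mathcal{M}_n)}$; equivalently, the linear isometry $(x(\cdot),v(\cdot))\mapsto(\phi(x(\cdot)),\phi(v(\cdot)))$ carries the graph of $\mathcal{F}+N_{\mathcal{C}}+sP_{\mathcal{M}_n}$ bijectively onto the graph of $\hat{F}+N_{\phi(\mathcal{C})}+sP_{\phi(\mathcal{M}_n)}$. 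Because $\phi$ preserves inner products, both monotonicity and the maximality condition in the definition of maximal monotonicity transfer verbatim between these two operators; consequently $\mathcal{F}+N_{\mathcal{C}}+sP_{\mathcal{M}_n}$ is maximal monotone on $\mathcal{L}_n$ for every $s>e_0$, which by Definition \ref{def2} is precisely the statement that the monotonicity of $\mathcal{F}+N_{\mathcal{C}}$ is globally elicited at level $s>e_0$.

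The identification of $\beta,\gamma,e_0$ and the summand-by-summand use of Proposition \ref{prop1} are routine. The step that needs genuine care is the last paragraph: one must argue cleanly that an isometric Hilbert-space isomorphism preserves \emph{global} maximal monotonicity of a set-valued operator, so that $T$ and $\phi T\phi^{-1}$ indeed stand or fall together. Once that stability under $\phi$ is secured, the theorem drops out of Theorem \ref{thmelicit}.
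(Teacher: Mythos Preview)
Your proposal is correct and follows essentially the same approach as the paper: apply Theorem \ref{thmelicit} to the Euclidean data $(\phi(\mathcal{C}),\hat{F},\phi(\mathcal{N}_n),\phi(\mathcal{M}_n))$ and then transport maximal monotonicity back to $\mathcal{L}_n$ via the isometry $\phi$. The only cosmetic difference is that the paper carries out the pullback by a short contradiction argument (if $\mathcal{T}_{s'}$ failed to be maximal monotone, the offending pair would map under $\phi$ to a pair violating maximal monotonicity of $\hat{\mathcal{T}}_{s'}$), whereas you phrase it as a direct graph-correspondence/operator-identity argument; these are equivalent, and your version is arguably more explicit.
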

\begin{proof}
Let $\mathcal{T}_s=\mathcal{F}+N_{\mathcal{C}}+sP_{\mathcal{M}_n}$ and ${\hat{\mathcal{T}}}_s=\hat{F}+N_{\phi(\mathcal{C})}+sP_{\phi(\mathcal{M}_n)}$.  By Theorem \ref{thmelicit} and the hypotheses, ${\hat{\mathcal{T}}}_s$ is globally maximal monotone for $s>e_0$.

Suppose by  contradiction that  there exists $s'>e_0$ such that $\mathcal{T}_{s'}$ is not maximal monotone. Then there is a pair $(x_u(\cdot),y_u(\cdot))\in\mathcal{L}_n\times\mathcal{L}_n$ with $y_u(\cdot)\notin\mathcal{T}_{s^{'}}(x_u(\cdot))$ such that
\begin{equation}\label{eqzlp}
\left<x_u(\cdot)-x_v(\cdot),y_u(\cdot)-y_v(\cdot)\right>\geq0, \quad \forall (x_v(\cdot),y_v(\cdot))~\text{with $y_v(\cdot)\in\mathcal{T}_{s'}(x_v(\cdot))$}.
\end{equation}
Let $x_u=\phi(x_u(\cdot))$, $x_v=\phi(x_v(\cdot))$, $y_u=\phi(y_u(\cdot))$ and $y_v=\phi(y_v(\cdot))$. Obviously, $y_u\notin{\hat{\mathcal{T}}}_{s'}(x_u)$ and $y_v\in{\hat{\mathcal{T}}}_{s'}(x_v)$. Following the proof of Lemma \ref{lem3}, (\ref{eqzlp}) yields
\begin{equation*}
\left<x_u-x_v,y_u-y_v\right>\geq0,
\end{equation*}
which implies that ${\hat{\mathcal{T}}}_{s'}$ is not globally maximal monotone. This is a contradiction and hence we complete the proof.
\end{proof}

\begin{remark} \label{rem3} In the following two special cases, we can simplify the condition (\ref{eq4}) in Theorem \ref{thm5}.

{\bf Case I:} Denote ${\rm Ker}(\hat{F}(x)):=\{z:\left<z,\hat{F}(x)\right>=0\}.$ Assume that $F$ is pseudomonotone on $S':=\{P_{\phi(\mathcal{N}_n)}z:z\in R^{\bar{n}}\backslash {\rm Ker}(\hat{F}(x))\}$. By Lemma \ref{lem4}, the condition (\ref{eq4}) can be simplified as follows:
\begin{equation}\label{eq5}
\begin{array}{rcl}
&&\left<y,D{\hat{F}}(x)y\right>\geq\alpha(x){\Vert y\Vert}^2,\quad \forall y\in S',\ \mbox{when}\ x\in {\rm int}\phi(\mathcal{C}),\\
&&\left<y,D{\hat{F}}(x)y\right>\geq\alpha(x){\Vert y\Vert}^2,\quad \forall y\in \phi(\mathcal{N}_n),\ \mbox{when}\ x\in {\rm bd}\phi(\mathcal{C}).
\end{array}
\end{equation}

 {\bf Case II:} Let $S':=\{P_{\phi(\mathcal{N}_n)}z:z\in R^{\bar{n}}\backslash {\rm Ker}(\hat{F}(x))\}$. If there exists an open convex set $\mathcal{C}_0\supset\mathcal{C}$ such that $\mathcal{F}:\mathcal{C}_0\rightarrow\mathcal{L}_n$ is  continuously differentiable and pseudomonotone on $\mathcal{C}_0$, then the condition (\ref{eq4}) can be simplified as
\begin{equation*}
\left<y,D{\hat{F}}(x)y\right>\geq\alpha(x){\Vert y\Vert}^2,\ \forall y\in S',\ \mbox{when}\ x\in \phi(\mathcal{C}).
\end{equation*}
\end{remark}

{ We now give a numerical example to illustrate the above criteria. To avoid the unnecessary confusion brought by the stages and the cardinality of the sample space $\Xi$, we assume that stage $N$ and the cardinality of $\Xi$ are both $1$.
\begin{example}\label{exa1}
	Let $$
	\begin{array}{rcl}&& F(x_1,x_2)=(x_1,-x_2)^T,\quad \mathcal{N}=\{(l,0)^T:l\in \mathbb{R}\},\\
		&&\mathcal{C}=\{(x_1,x_2)^T:x_1\geq2,x_2\geq2,x_2-x_1\geq0\}.
	\end{array}$$  Then SVI (\ref{eq1}) is exhibited as
	$$-F(x_1,x_2)\in N_{\mathcal{C}\cap \mathcal{N}}(x_1,x_2).$$
\end{example}}

Obviously, $DF(x_1,x_2))=[1,0;0,-1]$. The orthogonal complement of $\mathcal{N}$ is $\mathcal{M}=\{(0,t)^T:t\in \mathbb{R}\}$ and the corresponding projection matrix $P_{\mathcal{M}}=[0,0;0,1]$.

We now prove that $F(x_1,x_2)$ is pseudomonotone on $\mathcal{C}$. Construct $$\mathcal{C}_0=\{(x_1,x_2)^T:x_1>1,x_2>1,x_2-2x_1>0\}.$$ Then $\mathcal{C}_0\supset \mathcal{C}$ is an open convex set. For $(x_1,x_2)^T\in \mathcal{C}_0$, we have $F(x_1,x_2)\neq0$ and $${\rm Ker}(F(x_1,x_2))=\{(u_1,u_2)^T:u_{2}=\frac{x_1}{x_2}u_{1}\}.$$ Given $(u_1,u_2)^T\in {\rm Ker}(F(x_1,x_2))$, we have
\begin{equation*}
	(u_1,u_2)DF(x_1,x_2)(u_1,u_2)^T={u_1}^2-{u_2}^2={u_1}^2-{(\frac{x_1}{x_2})}^2{u_1}^2>0.
\end{equation*}
By Lemma \ref{lem4}, $F(x_1,x_2)$ is pseudomonotone on $\mathcal{C}$. But $F(x_1,x_2)$ is not monotone since $DF(x_1,x_2))$ has an negative eigenvalue.

Given $x\in \mathcal{C}$, we have $\left<u,DF(x)u\right>\geq{\Vert u\Vert}^2$ for $u\in \mathcal{N}$. Thus $\alpha(x)=1$, $\beta(x)=0$ and $\gamma(x)=1$ in Theorem \ref{thm5}. Then the monotonicity of $F+N_{\mathcal{C}}$ is globally elicited at level $s\geq e_0=1$ via Theorem $\ref{thm5}$.

\begin{theorem}\label{thm6}
Consider SVI (\ref{eq1}). Let $\mathcal{C}_0\supset\mathcal{C}$ be an open convex set in $\mathcal{L}_n$, and  $\mathcal{F}:\mathcal{C}_0\rightarrow\mathcal{L}_n$ be continuously differentiable and pseudomonotone on $\mathcal{C}_0$. Assume that $\hat{F}$ defined in (\ref{eq2.5}) satisfies the following conditions:
\begin{equation}\label{eq6}
\begin{array}{l}
(i)\ \text{$D\hat{F}(x)$ is symmetric}; \\
(ii)\ D\hat{F}(x)P_{\phi(\mathcal{M}_n)}=P_{\phi(\mathcal{M}_n)}D\hat{F}(x)
\end{array}
\end{equation}
for any $x\in\phi(\mathcal{C}_0)$, and
\begin{equation*}
(iii)\ \lambda_{i}(P_{\phi(\mathcal{M}_n)})>0\ \text{if $\lambda_{i}(D\hat{F}(x))<0$ for some $x\in\phi(\mathcal{C}_0)$}.
\end{equation*}
Let
\begin{equation*}
e_1=\inf\limits_{x\in\phi(\mathcal{C}_0)}\lambda_{\bar{n}}^{\downarrow}(D\hat{F}(x)).
\end{equation*} If $e_1>-\infty$, then the monotonicity of $\mathcal{F}+N_{\mathcal{C}}$ is globally elicited at level $s\geq-e_1$.
\end{theorem}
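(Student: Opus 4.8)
The plan is to carry the whole statement over to $\mathbb{R}^{\bar{n}}$ through the isomorphism $\phi$, reduce the claimed global elicitation to positive semidefiniteness of $D\hat{F}(x)+sP_{\phi(\mathcal{M}_n)}$ on $\phi(\mathcal{C}_0)$, and then read off the precise level $-e_1$ from the simultaneous diagonalization supplied by (i)--(ii) together with Lemma \ref{lem5}. As in the proof of Theorem \ref{thm5}, it suffices to show that $\hat{\mathcal{T}}_s:=\hat{F}+N_{\phi(\mathcal{C})}+sP_{\phi(\mathcal{M}_n)}$ is globally maximal monotone for every $s\geq-e_1$: the implication ``$\hat{\mathcal{T}}_s$ globally maximal monotone $\Rightarrow$ $\mathcal{T}_s=\mathcal{F}+N_{\mathcal{C}}+sP_{\mathcal{M}_n}$ globally maximal monotone'' is exactly the contradiction argument already carried out there, since $\phi$ is a linear isometric isomorphism (Lemma \ref{lem1}) and $\phi(\mathcal{T}_s(x(\cdot)))=\hat{\mathcal{T}}_s(\phi(x(\cdot)))$ by Proposition \ref{prop1}(f),(g) and $\phi(\mathcal{F}(x(\cdot)))=\hat{F}(\phi(x(\cdot)))$.

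Next I would establish maximal monotonicity of $\hat{\mathcal{T}}_s$ by a Jacobian criterion. Because $\phi$ is linear, $\phi(\mathcal{C}_0)$ is an open convex set containing $\phi(\mathcal{C})$ and $\hat{F}$ is continuously differentiable on it; $N_{\phi(\mathcal{C})}$ is maximal monotone with domain $\phi(\mathcal{C})\subset\phi(\mathcal{C}_0)$; and $P_{\phi(\mathcal{M}_n)}$ is a bounded self-adjoint linear operator. By the standard sum theorem for the normal-cone operator of a closed convex set plus a continuous monotone mapping defined on an open convex set containing that set (see, e.g., \cite{Rockafellar1998,Rockafellar2018elicited}), $\hat{\mathcal{T}}_s$ is globally maximal monotone as soon as $\hat{F}+sP_{\phi(\mathcal{M}_n)}$ is monotone on $\phi(\mathcal{C}_0)$. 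Since this map is $C^1$ on the open convex set $\phi(\mathcal{C}_0)$, monotonicity there amounts to $\langle y,(D\hat{F}(x)+sP_{\phi(\mathcal{M}_n)})y\rangle\geq0$ for all $y\in\mathbb{R}^{\bar{n}}$ and all $x\in\phi(\mathcal{C}_0)$, which by (i) and the symmetry of $P_{\phi(\mathcal{M}_n)}$ is just $D\hat{F}(x)+sP_{\phi(\mathcal{M}_n)}\succeq0$ for all $x\in\phi(\mathcal{C}_0)$.

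It then remains to verify this semidefiniteness. Fix $x\in\phi(\mathcal{C}_0)$. If $e_1\geq0$ the claim is trivial, since $D\hat{F}(x)\succeq e_1 I\succeq0$ makes $\hat{F}$ (hence $\mathcal{F}$, hence $\mathcal{F}+N_{\mathcal{C}}$) already maximal monotone; so assume $e_1<0$, whence $s\geq-e_1>0$. By (i), (ii) and Lemma \ref{lem5} there is an orthogonal $Q$ simultaneously diagonalizing $D\hat{F}(x)$ and $P_{\phi(\mathcal{M}_n)}$, and in the corresponding common eigenbasis $D\hat{F}(x)+sP_{\phi(\mathcal{M}_n)}$ is diagonal with entries $\lambda_i(D\hat{F}(x))+s\,\lambda_i(P_{\phi(\mathcal{M}_n)})$, where $\lambda_i(P_{\phi(\mathcal{M}_n)})\in\{0,1\}$ because $P_{\phi(\mathcal{M}_n)}$ is an orthogonal projection. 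For an index with $\lambda_i(D\hat{F}(x))\geq0$ the entry is $\geq0$ since $s\geq0$. For an index with $\lambda_i(D\hat{F}(x))<0$, condition (iii) forces $\lambda_i(P_{\phi(\mathcal{M}_n)})=1$, so the entry equals $\lambda_i(D\hat{F}(x))+s\geq\lambda_{\bar{n}}^{\downarrow}(D\hat{F}(x))+s\geq e_1+s\geq0$. Hence all diagonal entries are nonnegative, so $D\hat{F}(x)+sP_{\phi(\mathcal{M}_n)}\succeq0$, and the previous two paragraphs finish the proof.

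The main obstacle I anticipate is the bookkeeping in the last step: the notation $\lambda_i(\cdot)$ (per the convention after Lemma \ref{lem5}) is tied to a choice of simultaneous diagonalizer $Q=Q_x$, which depends on $x$ and need not be unique when $D\hat{F}(x)$ has repeated eigenvalues, so one has to argue that (iii) really encodes the coordinate-free statement ``each eigenspace of $D\hat{F}(x)$ belonging to a negative eigenvalue lies in $\mathrm{range}\,P_{\phi(\mathcal{M}_n)}=\phi(\mathcal{M}_n)$'' and that the computation does not depend on which common diagonalizer is used. Pseudomonotonicity of $\mathcal{F}$ is almost a spectator here: transported by $\phi$ it makes $\hat{F}$ pseudomonotone on $\phi(\mathcal{C}_0)$, so Lemma \ref{lem7} gives at most one negative $\lambda_i(D\hat{F}(x))$ and thereby renders (iii) a mild single-direction hypothesis, but it is not otherwise needed for the conclusion. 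A secondary point of care is invoking the correct maximal-monotonicity sum theorem in the second paragraph, so that the qualification ``$\phi(\mathcal{C})\subset\phi(\mathcal{C}_0)$ with $\phi(\mathcal{C}_0)$ open'' is what secures maximality of $\hat{\mathcal{T}}_s$ and not merely its monotonicity.
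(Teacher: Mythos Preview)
Your proposal is correct and follows essentially the same route as the paper's own proof: simultaneously diagonalize $D\hat{F}(x)$ and $P_{\phi(\mathcal{M}_n)}$ via Lemma~\ref{lem5}, use that projection eigenvalues lie in $\{0,1\}$ together with condition~(iii) to make every diagonal entry $\lambda_i(D\hat{F}(x))+s\lambda_i(P_{\phi(\mathcal{M}_n)})$ nonnegative once $s\ge -e_1$, conclude monotonicity of $\hat{F}+sP_{\phi(\mathcal{M}_n)}$ on $\phi(\mathcal{C}_0)$, apply Rockafellar's sum rule to get maximal monotonicity of $\hat{\mathcal{T}}_s$, and transfer back through $\phi$. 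Your write-up is in fact more careful than the paper's in separating the case $e_1\ge0$, in flagging the dependence of the diagonalizer $Q$ on $x$, and in observing that pseudomonotonicity plays no role in the actual argument (it is only contextual, via Lemma~\ref{lem7}).
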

\begin{proof}
According to Lemma \ref{lem5}, the eigenvalues of $D\hat{F}(x)+sP_{\phi(\mathcal{M}_n)}$ can be denoted as $\lambda_i(D\hat{F}(x))+s\lambda_i(P_{\phi(\mathcal{M}_n)})$ for $x\in\phi(\mathcal{C}_0)$ and $1\leq i\leq\bar{n}$. Inasmuch as the eigenvalues of projection matrix is 0 or 1, $\lambda_i(D\hat{F}(x))+s\lambda_i(P_{\phi(\mathcal{M}_n)})\geq0$ when $\lambda_{i}(D\hat{F}(x))\geq0$. If $\lambda_{i}(D\hat{F}(x))<0$ for some $x\in\phi(\mathcal{C}_0)$, it is not difficult to see that $\lambda_{i}(D\hat{F}(x))+s\lambda_{i}(P_{\phi(\mathcal{M}_n)})\geq0$ when $s\geq-e_1$. By \cite[Proposition 2.3.2]{Facchinei2003}, $\hat{F}+sP_{\phi(\mathcal{M}_n)}$ is monotone on $\phi(\mathcal{C}_0)$. Thus $\hat{F}+N_{\phi(\mathcal{C})}+sP_{\phi(\mathcal{M}_n)}$ is maximal monotone from \cite[Theorem 3]{Rockafellar1970}, which indicates the maximal monotonicity of $\mathcal{F}+N_{\mathcal{C}}+sP_{\mathcal{M}_n}$.
\end{proof}

\begin{example} Assume that stage $N$ and the cardinality of $\Xi$ are both 1. Let $F(x_1,x_2)=(0,-x_2)^T$, $\mathcal{C}=\{(x_1,x_2)^T:x_1\geq1,x_2\geq1\}$, and $\mathcal{N}=\{(l,0)^T:l\in \mathbb{R}\}$. Then SVI (\ref{eq1}) is exhibited as
		$$-F(x_1,x_2)\in N_{\mathcal{C}\cap \mathcal{N}}(x_1,x_2).$$
\end{example}
		
		Obviously, $DF(x_1,x_2))=[0,0;0,-1]$, the orthogonal complement of $\mathcal{N}$ is $\mathcal{M}=\{(0,t)^T:t\in R\}$ and the corresponding projection matrix is $P_{\mathcal{M}}=[0,0;0,1]$.
		
		Construct $\mathcal{C}_0=\{(x_1,x_2)^T:x_1>0,x_2>0\}$. Then $\mathcal{C}_0\supset \mathcal{C}$ is an open convex set.  It holds that $F(x_1,x_2)$ is pseudomonotone on $\mathcal{C}_0$. In fact, for any given $(x_1,x_2)^T,(y_1,y_2)^T\in \mathcal{C}_0$ with $\left<-F(x_1,x_2),(y_1-x_1,y_2-x_2)^T\right>\geq0$, we have $y_2-x_2\leq0$ due to
		$$\left<-F(x_1,x_2),(y_1-x_1,y_2-x_2)^T\right>=-x_2(y_2-x_2), \quad x_2>0.$$
		This together with $y_2\ge 0$ implies that
		$$\left<-F(y_1,y_2),(y_1-x_1,y_2-x_2)^T\right>=-y_2(y_2-x_2)\geq0.$$  Hence, $F(x_1,x_2)$ is pseudomonotone on $\mathcal{C}_0$. However, $F(x_1,x_2)$ is not monotone on $\mathcal{C}_0$ because of the negative eigenvalue of $DF(x_1,x_2)$. Nevertheless, for $(x_1,x_2)^T\in \mathcal{C}$, $DF(x_1,x_2)$ is symmetric, and $DF(x_1,x_2)P_{\mathcal{M}}=P_{\mathcal{M}}DF(x_1,x_2)$. Besides, $\lambda_i(P_{\mathcal{M}})=1>0$ while $\lambda_i(DF(x))<0$. Thus,  by Theorem \ref{thm6},  $F+N_{\mathcal{C}}+sP_{\mathcal{M}}$ is globally maximal monotone at level $s\geq e_1=1$.

\begin{remark} \label{rem6-3}
Note that the condition $(ii)$ in (\ref{eq6}) may hold true even when $D\hat{F}(x)$ and $P_{\phi(\mathcal{M}_n)}$ are not diagonal matrices. For example, assume that stage $N$ and the cardinality of $\Xi$ are both 1, $C=\mathbb{R}_{+}^4$, $\hat{F}=Mx$ with
\begin{equation*}
M=
\begin{bmatrix}
10&0&0&1\\
0&4&1&0\\
0&1&4&0\\
1&0&0&5
\end{bmatrix},
\end{equation*}
and subspace $M_4=P_{M_4}x$ with
\begin{equation*}
P_{M_4}=
\begin{bmatrix}
1&0&0&0\\
0&0.5&0.5&0\\
0&0.5&0.5&0\\
0&0&0&1
\end{bmatrix}.
\end{equation*}
$P_{M_4}$ is a projection matrix due to $P_{M_4}^2=P_{M_4}$ and $P_{M_4}^T=P_{M_4}$. Since $M$ is positive semidefinite, $\hat{F}$ is monotone on $C$ and thus pseudomonotone on $C$. Via some simple calculations, we can see that $D\hat{F}(x)P_{M_4}=P_{M_4}D\hat{F}(x)$ for $x\in C$, as is consistent with condition $(ii)$ in (\ref{eq6}).
\end{remark}

In case that it is difficult to calculate the minimum eigenvalue $\lambda_{\bar{n}}^{\downarrow}(D\hat{F}(x))$ over $\phi({\cal C}_0)$, the following corollary says that we may use the spectral radius to replace the minimum eigenvalue.
\begin{corollary}\label{cor6-1}
Consider SVI (\ref{eq1}). Let $\mathcal{C}_0\supset\mathcal{C}$ be an open convex set in $\mathcal{L}_n$, and  $\mathcal{F}:\mathcal{C}_0\rightarrow\mathcal{L}_n$ be continuously differentiable and pseudomonotone on $\mathcal{C}_0$. Assume that $\hat{F}$ defined in (\ref{eq2.5}) satisfies the following conditions for any $x\in\phi(\mathcal{C}_0)$:
\begin{equation}\label{eq6-1-1}
\begin{array}{l}
(i)\ \text{$D\hat{F}(x)$ is symmetric}; \\
(ii)\ {\rm tr}((D\hat{F}(x)P_{\phi(\mathcal{M}_n)})^{2})={\rm tr}((D\hat{F}(x))^{2}P_{\phi(\mathcal{M}_n)});\\
(iii)\ \text{$D\hat{F}(x)P_{\phi(\mathcal{M}_n)}$ is not positive semidefinite.}
\end{array}
\end{equation}

Let $\hat{e}_1=\sup\limits_{x\in\phi(\mathcal{C}_0)}\rho(D\hat{F}(x))$.
If $\hat{e}_1<+\infty$, then the monotonicity of $\mathcal{F}+N_{\mathcal{C}}$ is globally elicited at level $s\geq \hat{e}_1$.
\end{corollary}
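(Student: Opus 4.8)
The plan is to obtain the corollary as a consequence of Theorem \ref{thm6}: I will show that the three hypotheses of the corollary, together with $\hat{e}_1<+\infty$, imply the three hypotheses of Theorem \ref{thm6} and that $e_1>-\infty$ with $-e_1\le\hat{e}_1$. The set-up carries over for free: exactly as in the proofs of Theorems \ref{thm3} and \ref{thm4}, the linear isometric isomorphism $\phi$ of Lemma \ref{lem1} turns $\mathcal{F}$, continuously differentiable and pseudomonotone on the open convex set $\mathcal{C}_0$, into $\hat{F}$, continuously differentiable and pseudomonotone on the open convex set $\phi(\mathcal{C}_0)$, and condition $(i)$ of the corollary is literally condition $(i)$ of Theorem \ref{thm6}. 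So the real work is (a) to derive the commutativity $D\hat{F}(x)P_{\phi(\mathcal{M}_n)}=P_{\phi(\mathcal{M}_n)}D\hat{F}(x)$ from the trace identity $(ii)$, and (b) to derive condition $(iii)$ of Theorem \ref{thm6} from conditions $(i)$ and $(iii)$ together with pseudomonotonicity.

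For (a), fix $x\in\phi(\mathcal{C}_0)$ and write $A=D\hat{F}(x)$, $P=P_{\phi(\mathcal{M}_n)}$, so $A=A^{T}$ by $(i)$ and $P=P^{T}=P^{2}$. The commutator $S:=AP-PA$ satisfies $S^{T}=P^{T}A^{T}-A^{T}P^{T}=PA-AP=-S$, i.e.\ $S$ is skew-symmetric, hence $\Vert S\Vert_F^{2}={\rm tr}(S^{T}S)=-{\rm tr}(S^{2})$. Expanding $S^{2}$, using $P^{2}=P$ and the cyclic invariance of the trace, one gets ${\rm tr}(S^{2})=2\,{\rm tr}((AP)^{2})-2\,{\rm tr}(A^{2}P)$, so that
\begin{equation*}
\Vert AP-PA\Vert_F^{2}=2\big[{\rm tr}((D\hat{F}(x))^{2}P_{\phi(\mathcal{M}_n)})-{\rm tr}((D\hat{F}(x)P_{\phi(\mathcal{M}_n)})^{2})\big].
\end{equation*}
By hypothesis $(ii)$ the right-hand side is zero, so $AP=PA$ for every $x\in\phi(\mathcal{C}_0)$, which is condition $(ii)$ of Theorem \ref{thm6}. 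By Lemma \ref{lem5} there is an orthogonal $Q$ simultaneously diagonalizing $A$ and $P$, and since $P$ is a projection each $\lambda_i(P)\in\{0,1\}$.

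For (b), note that $\hat{F}$ is pseudomonotone on the open convex set $\phi(\mathcal{C}_0)$ and $A$ is symmetric, so Lemma \ref{lem7} gives that $A$ has at most one negative eigenvalue. Condition $(iii)$ says $AP$ is not positive semidefinite; since its eigenvalues are $\lambda_i(A)\lambda_i(P)$, some $\lambda_{i_0}(A)\lambda_{i_0}(P)<0$, which (as $\lambda_{i_0}(P)\in\{0,1\}$) forces $\lambda_{i_0}(P)=1$ and $\lambda_{i_0}(A)<0$. Combined with Lemma \ref{lem7}, $\lambda_{i_0}(A)$ is the unique negative eigenvalue of $A$, and every index $i$ with $\lambda_i(A)<0$ has $\lambda_i(P)=1>0$ — condition $(iii)$ of Theorem \ref{thm6}. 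Finally $\lambda_{\bar{n}}^{\downarrow}(A)\ge-\rho(A)\ge-\hat{e}_1$ for all $x\in\phi(\mathcal{C}_0)$, whence $e_1=\inf_{x\in\phi(\mathcal{C}_0)}\lambda_{\bar{n}}^{\downarrow}(D\hat{F}(x))\ge-\hat{e}_1>-\infty$ and $-e_1=\sup_x(-\lambda_{\bar{n}}^{\downarrow}(D\hat{F}(x)))\le\sup_x\rho(D\hat{F}(x))=\hat{e}_1$. Theorem \ref{thm6} then yields that the monotonicity of $\mathcal{F}+N_{\mathcal{C}}$ is globally elicited at level $s\ge-e_1$, and therefore at every $s\ge\hat{e}_1$.

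The only non-routine ingredient is the commutator identity in step (a): the point to notice is that symmetry of \emph{both} $D\hat{F}(x)$ and $P_{\phi(\mathcal{M}_n)}$ makes their commutator skew-symmetric, so that the apparently weak scalar equation $(ii)$ actually detects commutativity. Everything else is bookkeeping plus appeals to results already established. As an alternative to routing through Theorem \ref{thm6}, one could argue directly once commutativity is in hand: the eigenvalues of $D\hat{F}(x)+sP_{\phi(\mathcal{M}_n)}$ are $\lambda_i(D\hat{F}(x))+s\lambda_i(P_{\phi(\mathcal{M}_n)})$, and for $s\ge\hat{e}_1$ these are all nonnegative (the lone possibly-negative $\lambda_{i_0}(D\hat{F}(x))$ is paired with $\lambda_{i_0}(P_{\phi(\mathcal{M}_n)})=1$ and $s\ge\rho(D\hat{F}(x))\ge-\lambda_{i_0}(D\hat{F}(x))$), so $\hat{F}+sP_{\phi(\mathcal{M}_n)}$ is monotone on $\phi(\mathcal{C}_0)$ by \cite[Proposition 2.3.2]{Facchinei2003}, $\hat{F}+N_{\phi(\mathcal{C})}+sP_{\phi(\mathcal{M}_n)}$ is maximal monotone by \cite[Theorem 3]{Rockafellar1970}, and the isomorphism (as in the proof of Theorem \ref{thm5}) transfers this to $\mathcal{F}+N_{\mathcal{C}}+sP_{\mathcal{M}_n}$.
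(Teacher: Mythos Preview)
Your proof is correct and follows the same route as the paper: reduce to Theorem \ref{thm6} by verifying its hypotheses from $(i)$--$(iii)$ and then use $\rho(D\hat{F}(x))\ge|\lambda_{\bar n}^{\downarrow}(D\hat{F}(x))|$ to compare $\hat e_1$ with $-e_1$. The only differences are expository: where the paper cites \cite{Roger2013} for the equivalence between the trace identity $(ii)$ and commutativity, you supply the direct commutator calculation $\Vert AP-PA\Vert_F^{2}=2[{\rm tr}(A^{2}P)-{\rm tr}((AP)^{2})]$; and where the paper tersely says a negative eigenvalue of $\Lambda_A\Lambda_B$ ``indicates condition $(iii)$ in Theorem \ref{thm6}'', you make the use of Lemma \ref{lem7} (at most one negative eigenvalue of $D\hat F(x)$) explicit to pass from ``some index $i_0$ has $\lambda_{i_0}(A)<0$ and $\lambda_{i_0}(P)=1$'' to ``every index with $\lambda_i(A)<0$ has $\lambda_i(P)>0$''.
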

\begin{proof}
On account of the fact that $\rho(D\hat{F}(x))\geq|\lambda_{\bar{n}}^{\downarrow}(D\hat{F}(x))|$, it suffices to prove that the conditions (ii) and (iii) imply the conditions in Theorem \ref{thm6}.

Firstly, $D\hat{F}(x)$ commutes with $P_{\phi(\mathcal{M}_n)}$ if and only if $tr((D\hat{F}(x)P_{\phi(\mathcal{M}_n)})^{2})={\rm tr}((D\hat{F}(x))^{2}P_{\phi(\mathcal{M}_n)})$ \cite{Roger2013}. This is condition (ii).

Secondly, when $D\hat{F}(x)P_{\phi(\mathcal{M}_n)}$ is not positive semidefinite, condition $(iii)$ in Theorem \ref{thm6} holds true. In fact, suppose that there exist orthogonal matrix $U$ such that $U^{T}D\hat{F}(x)U=\Lambda_{A}$ and $U^{T}P_{\phi(\mathcal{M}_n)}U=\Lambda_{B}$, where $\Lambda_{A}$ and $\Lambda_{B}$ are diagonal matrices and the existence of $U$ is assured by Lemma \ref{lem5}. Then we have
\begin{equation*}
U^{T}D\hat{F}(x)UU^{T}P_{\phi(\mathcal{M}_n)}U=U^{T}D\hat{F}(x)P_{\phi(\mathcal{M}_n)}U=\Lambda_{A}\Lambda_{B}.
\end{equation*}
Thus $D\hat{F}(x)P_{\phi(\mathcal{M}_n)}$ and $\Lambda_{A}\Lambda_{B}$ have same eigenvalues. Since $D\hat{F}(x)P_{\phi(\mathcal{M}_n)}$ is not positive semidefinite, there exists at least one negative eigenvalue for $D\hat{F}(x)P_{\phi(\mathcal{M}_n)}$, and so is $\Lambda_{A}\Lambda_{B}$, as indicates condition $(iii)$ in Theorem \ref{thm6}.
\end{proof}

Now we give some remarks to explain that the conditions in Theorem \ref{thm6} may hold.

\begin{remark}\label{rem6-1}
Since $D\hat{F}(x)$ is block diagonal for $x\in\phi(\mathcal{C})$, i.e.,
\begin{equation*}D\hat{F}(x)=
\begin{bmatrix}
DF(\frac{1}{\sqrt{p(\xi^1)}}x_{p_1},\xi^1) & & &0\\
& DF(\frac{1}{\sqrt{p(\xi^2)}}x_{p_2},\xi^2) & &\\
& &\ddots& \\
0& & & DF(\frac{1}{\sqrt{p(\xi^J)}}x_{p_J},\xi^J)
\end{bmatrix},
\end{equation*}
there also exist some other criteria for condition $D\hat{F}(x)P_{\phi(\mathcal{M}_n)}=P_{\phi(\mathcal{M}_n)}D\hat{F}(x)$.

For instance, provided that $P_{\phi(\mathcal{M}_n)}$ is block diagonal, i.e., \begin{equation*}P_{\phi(\mathcal{M}_n)}=
\begin{bmatrix}
P_{1} & & &0\\
& P_{2} & &\\
& &\ddots& \\
0& & & P_{J}
\end{bmatrix},
\end{equation*}
and $$DF(\frac{1}{\sqrt{p(\xi^i)}}x_{p_i},\xi^i)P_{i}=P_{i}DF(\frac{1}{\sqrt{p(\xi^i)}}x_{p_i},\xi^i), \quad 1\leq i\leq J.$$ Then, it is easy to see that $D\hat{F}(x)P_{\phi(\mathcal{M}_n)}=P_{\phi(\mathcal{M}_n)}D\hat{F}(x)$.
\end{remark}

\begin{remark}\label{rem6-2}
Since projection matrix $P_{\phi(\mathcal{M}_n)}$ is usually sparse, the computation cost of the principal minors (or the eigenvalues) of matrix $D\hat{F}(x)P_{\phi(\mathcal{M}_n)}$ is relatively low, especially when $\hat{F}$ is linear. In this case, it is easy to test  whether $D\hat{F}(x)P_{\phi(\mathcal{M}_n)}$ is positive semidefinite since the matrix is constant.
\end{remark}

\begin{theorem}\label{thm7}
Consider SVI (\ref{eq1}). Let $\mathcal{C}_0\supset\mathcal{C}$ be an open convex set in $\mathcal{L}_n$, and  $\mathcal{F}:\mathcal{C}_0\rightarrow\mathcal{L}_n$ be continuously differentiable and pseudomonotone on $\mathcal{C}_0$. Let $D\hat{F}(x)$ be symmetric with $\hat{F}$ defined in (\ref{eq2.5}) for $x\in\phi(\mathcal{C}_0)$. If $e_{2}>0$ satisfies the condition that the multiplicity of the minimum eigenvalue of $D\hat{F}(x)+e_{2}P_{\phi(\mathcal{M}_n)}$ is strictly larger than one for all $x\in\phi(\mathcal{C}_0)$, then the monotonicity of $\mathcal{F}+N_{\mathcal{C}}$ is globally elicited at level $s\geq e_{2}$.
\end{theorem}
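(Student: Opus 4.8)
The plan is to reduce, via the isomorphism $\phi$ of Section~\ref{iso}, to showing that $\hat{F}+N_{\phi(\mathcal{C})}+sP_{\phi(\mathcal{M}_n)}$ is globally maximal monotone for every $s\geq e_{2}$, and then to transfer this back through $\phi$ exactly as in the proofs of Theorems~\ref{thm5} and~\ref{thm6}. Since $\mathcal{F}$ is pseudomonotone on the open convex set $\mathcal{C}_0$, the map $\hat{F}$ is continuously differentiable and pseudomonotone on the open convex set $\phi(\mathcal{C}_0)$ (Proposition~\ref{prop1}), so Lemma~\ref{lem7} applies to $\hat{F}$: for every $x\in\phi(\mathcal{C}_0)$ the symmetric matrix $D\hat{F}(x)$ has at most one negative eigenvalue counted with multiplicity, equivalently $\lambda_{\bar{n}-1}^{\downarrow}(D\hat{F}(x))\geq0$. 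The heart of the argument is then the claim that $D\hat{F}(x)+sP_{\phi(\mathcal{M}_n)}$ is positive semidefinite for all $x\in\phi(\mathcal{C}_0)$ and all $s\geq e_{2}$. Granting this, \cite[Proposition 2.3.2]{Facchinei2003} shows $\hat{F}+sP_{\phi(\mathcal{M}_n)}$ is monotone on $\phi(\mathcal{C}_0)$, \cite[Theorem 3]{Rockafellar1970} upgrades $\hat{F}+N_{\phi(\mathcal{C})}+sP_{\phi(\mathcal{M}_n)}$ to maximal monotonicity, and pulling back through $\phi$ yields the global elicitation of the monotonicity of $\mathcal{F}+N_{\mathcal{C}}$ at level $s\geq e_{2}$.

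To establish the semidefiniteness claim, fix $x\in\phi(\mathcal{C}_0)$ and set $A_{s}:=D\hat{F}(x)+sP_{\phi(\mathcal{M}_n)}$, which is symmetric because $D\hat{F}(x)$ is symmetric by hypothesis and $P_{\phi(\mathcal{M}_n)}$ is an orthogonal projection. Applying Lemma~\ref{lem6} to the pair $(A_{s},D\hat{F}(x))$ with $j=i$ gives, for each $1\leq i\leq\bar{n}$,
\[
\lambda_{i}^{\downarrow}(A_{s})\;\geq\;\lambda_{i}^{\downarrow}(D\hat{F}(x))+\lambda_{\bar{n}}^{\downarrow}(sP_{\phi(\mathcal{M}_n)})\;=\;\lambda_{i}^{\downarrow}(D\hat{F}(x))+s\,\lambda_{\bar{n}}^{\downarrow}(P_{\phi(\mathcal{M}_n)}),
\]
and since the smallest eigenvalue of a projection matrix is nonnegative and $s\geq e_{2}>0$, we conclude $\lambda_{i}^{\downarrow}(A_{s})\geq\lambda_{i}^{\downarrow}(D\hat{F}(x))$ for all $i$. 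In particular $\lambda_{\bar{n}-1}^{\downarrow}(A_{e_{2}})\geq\lambda_{\bar{n}-1}^{\downarrow}(D\hat{F}(x))\geq0$, so $A_{e_{2}}$ has at most one negative eigenvalue. Now the hypothesis states that the minimum eigenvalue $\lambda_{\bar{n}}^{\downarrow}(A_{e_{2}})$ has multiplicity at least two; were it negative, $A_{e_{2}}$ would have at least two negative eigenvalues, contradicting the previous sentence. Hence $\lambda_{\bar{n}}^{\downarrow}(A_{e_{2}})\geq0$, i.e. $A_{e_{2}}$ is positive semidefinite, and for $s\geq e_{2}$ the matrix $A_{s}=A_{e_{2}}+(s-e_{2})P_{\phi(\mathcal{M}_n)}$ is a sum of two positive semidefinite matrices, hence positive semidefinite.

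The only genuinely delicate point is this middle step: turning the unusual hypothesis ``the minimum eigenvalue of $D\hat{F}(x)+e_{2}P_{\phi(\mathcal{M}_n)}$ has multiplicity greater than one'' into positive semidefiniteness. This hinges on pairing Lemma~\ref{lem7} (at most one negative eigenvalue of $D\hat{F}(x)$) with Lemma~\ref{lem6}, and on invoking Lemma~\ref{lem6} with the index choice $j=i$ so that precisely the smallest eigenvalue of the perturbation $sP_{\phi(\mathcal{M}_n)}$ — namely $0$ — appears on the right-hand side. The remaining ingredients — continuous differentiability and pseudomonotonicity of $\hat{F}$ on $\phi(\mathcal{C}_0)$, symmetry of $P_{\phi(\mathcal{M}_n)}$, Rockafellar's sum theorem for the normal-cone term, and the transfer of maximal monotonicity across $\phi$ — are all routine and already available from earlier in the paper.
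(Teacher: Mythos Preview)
Your proof is correct and follows essentially the same route as the paper's: both use Weyl-type eigenvalue interlacing (Lemma~\ref{lem6}) together with the ``at most one negative eigenvalue'' fact for pseudomonotone Jacobians to conclude $\lambda_{\bar{n}-1}^{\downarrow}(A_{e_{2}})\geq0$, then invoke the multiplicity hypothesis to force $\lambda_{\bar{n}}^{\downarrow}(A_{e_{2}})=\lambda_{\bar{n}-1}^{\downarrow}(A_{e_{2}})\geq0$, and finally extend to $s\geq e_{2}$ by monotonicity under positive semidefinite perturbation. The only cosmetic difference is that the paper applies Lemma~\ref{lem6} with $A=A_{e_{2}}$, $B=e_{2}P_{\phi(\mathcal{M}_n)}$ and indices $j=\bar{n}-1$, $i=\bar{n}$, whereas you take $B=D\hat{F}(x)$ and $j=i$; both choices yield the same inequality $\lambda_{\bar{n}-1}^{\downarrow}(A_{e_{2}})\geq\lambda_{\bar{n}-1}^{\downarrow}(D\hat{F}(x))$.
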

\begin{proof}
Let $A=D\hat{F}(x)+e_{2}P_{\phi(\mathcal{M}_n)}$, $B=e_{2}P_{\phi(\mathcal{M}_n)}$ and $j=\bar{n}-1$, $i=\bar{n}$. It follows from Lemma \ref{lem6} that $$\lambda_{\bar{n}-1}^{\downarrow}(D\hat{F}(x)+e_{2}P_{\phi(\mathcal{M}_n)})\geq\lambda_{\bar{n}}^{\downarrow}(e_{2}P_{\phi(\mathcal{M}_n)})+\lambda_{\bar{n}-1}^{\downarrow}(D\hat{F}(x)).$$ By Lemma \ref{lem4}, $\lambda_{\bar{n}-1}^{\downarrow}(D\hat{F}(x))\geq0$. Based on the hypothesis on $e_{2}$, we have $$\lambda_{\bar{n}}^{\downarrow}(D\hat{F}(x)+e_{2}P_{\phi(\mathcal{M}_n)})=\lambda_{\bar{n}-1}^{\downarrow}(D\hat{F}(x)+e_{2}P_{\phi(\mathcal{M}_n)})\geq0.$$ By \cite[Corollary 4.3.15]{Roger2013}, when $s\ge e_2$, it holds $$\lambda_{\bar{n}}^{\downarrow}(D\hat{F}(x)+sP_{\phi(\mathcal{M}_n)})\geq\lambda_{\bar{n}}^{\downarrow}(D\hat{F}(x)+e_{2}P_{\phi(\mathcal{M}_n)}),$$ which indicates the monotonicity of $\hat{F}+sP_{\phi(\mathcal{M}_n)}$ on $\phi(\mathcal{C}_0)$. Via the same procedure in the proof of Theorem \ref{thm6}, $\mathcal{F}+N_{\mathcal{C}}+sP_{\mathcal{M}_n}$ is maximal monotone.
\end{proof}
{
	\begin{example}
		Assume that stage $N$ and the cardinality of $\Xi$ are both 1. Let $F(x_1,x_2,x_3)=(-x_1,0,0)^T$, $\mathcal{N}=\{(0,l,t)^T:l,t\in \mathbb{R}\}$, and $\mathcal{C}=\{(x_1,x_2,x_3)^T:x_1\geq1,x_2\geq1,x_3\geq1\}$. Then SVI (\ref{eq1}) is exhibited as $-F(x_1,x_2,x_3)\in N_{\mathcal{C}\cap \mathcal{N}}(x_1,x_2,x_3)$.
\end{example}}
		
		Obviously, $DF(x_1,x_2,x_3))=[-1,0,0;0,0,0;0,0,0]$. In addition, the orthogonal complement of $\mathcal{N}$ is $\mathcal{M}=\{(v,0,0)^T:v\in \mathbb{R}\}$ and the corresponding projection matrix $P_{\mathcal{M}}=[1,0,0;0,0,0;0,0,0]$.
		
		Construct $\mathcal{C}_0=\{(x_1,x_2,x_3)^T:x_1>0,x_2>0,x_3>0\}$. Then $\mathcal{C}_0\supset \mathcal{C}$ is an open convex set. Obviously, $F(x_1,x_2,x_3)$ is pseudomonotone on $\mathcal{C}_0$. In fact, for $(x_1,x_2,x_3)^T,(y_1,y_2,y_3)^T\in \mathcal{C}_0$ with $$\left<-F(x_1,x_2,x_3),(y_1-x_1,y_2-x_2,y_3-x_3)^T\right>\geq0,$$ we have $y_1-x_1\leq0$ since $$\left<-F(x_1,x_2,x_3),(y_1-x_1,y_2-x_2,y_3-x_3)^T\right>=-x_1(y_1-x_1),\quad x_1>0.$$ This implies that $$\left<-F(y_1,y_2,y_3),(y_1-x_1,y_2-x_2,y_3-x_3)^T\right>=-y_1(y_1-x_1)\geq0.$$ Thus $F(x_1,x_2,x_3)$ is pseudomonotone on $\mathcal{C}_0$.
		
		On the other hand, $F(x_1,x_2,x_3)$ is not monotone on $\mathcal{C}_0$ as a result of the negative eigenvalue of $DF(x_1,x_2,x_3)$. But $F+N_{\mathcal{C}}+sP_{\mathcal{M}}$ is globally maximal monotone at level $s\geq1$ from Theorem $\ref{thm7}$, inasmuch as $DF(x_1,x_2,x_3)$ is symmetric and the multiplicity of the minimum eigenvalue of $DF(x_1,x_2,x_3)$ is $2$.

\begin{corollary}\label{cor7-1}
Consider SVI (\ref{eq1}). Let $\mathcal{C}_0\supset\mathcal{C}$ be an open convex set in $\mathcal{L}_n$, and  $\mathcal{F}:\mathcal{C}_0\rightarrow\mathcal{L}_n$ be continuously differentiable and pseudomonotone on $\mathcal{C}_0$. Let $D\hat{F}(x)$ be symmetric with $\hat{F}$ defined in (\ref{eq2.5}) for $x\in\phi(\mathcal{C}_0)$. If $\hat{e}_2>0$ satisfies the condition that $D\hat{F}(x)+\hat{e}_2P_{\phi(\mathcal{M}_n)}$ is block diagonal with $k(x)$ pairs of same blocks for all $x\in\phi(\mathcal{C}_0)$, where $k(x)\geq1$ is an integer, i.e.,
\begin{equation*}D\hat{F}(x)+\hat{e}_2P_{\phi(\mathcal{M}_n)}=
\begin{bmatrix}
A_{1} & & &0\\
& A_{2} & &\\
& &\ddots& \\
0& & & A_{2k(x)}
\end{bmatrix},
\end{equation*}
and there is partition $\{i_{1},i_{2},\ldots,i_{k(x)}\}$ and $\{j_{1},j_{2},\ldots,j_{k(x)}\}$ of $\{1,\ldots,2k(x)\}$ such that $A_{i_{l}}=A_{j_{l}}$, $1\leq l\leq k(x)$, then the monotonicity of $\mathcal{F}+N_{\mathcal{C}}$ is globally elicited at level $s\geq \hat{e}_2$.
\end{corollary}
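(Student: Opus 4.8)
The plan is to reduce Corollary~\ref{cor7-1} to Theorem~\ref{thm7} by showing that the paired block-diagonal structure of $D\hat{F}(x)+\hat{e}_2P_{\phi(\mathcal{M}_n)}$ forces the minimum eigenvalue of this matrix to have multiplicity at least two at every $x\in\phi(\mathcal{C}_0)$; once this is established, Theorem~\ref{thm7} applies verbatim with $e_2=\hat{e}_2$.

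First I would fix $x\in\phi(\mathcal{C}_0)$ and set $A(x):=D\hat{F}(x)+\hat{e}_2P_{\phi(\mathcal{M}_n)}$. Since $D\hat{F}(x)$ is symmetric by hypothesis and $P_{\phi(\mathcal{M}_n)}$ is an orthogonal projection (hence symmetric), $A(x)$ is symmetric, so its eigenvalues are real and the spectrum of $A(x)$, counted with multiplicity, is the union of the spectra of its diagonal blocks. Writing $A(x)=\mathrm{diag}(A_1,\dots,A_{2k(x)})$ as in the statement, we therefore have
\begin{equation*}
\lambda_{\bar{n}}^{\downarrow}(A(x))=\min_{1\le t\le 2k(x)}\lambda_{\min}(A_t),
\end{equation*}
and this minimum is attained at some index $t_0\in\{1,\dots,2k(x)\}$.

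Next I would invoke the pairing: there is an index $l_0$ with $t_0\in\{i_{l_0},j_{l_0}\}$, and letting $t_0'$ be the other element of $\{i_{l_0},j_{l_0}\}$ we have $A_{t_0'}=A_{t_0}$ with $t_0'\neq t_0$. Then $\lambda_{\min}(A_{t_0})$ is an eigenvalue of both $A_{t_0}$ and $A_{t_0'}$ with the same value, so when the spectra of all blocks are concatenated the value $\lambda_{\bar{n}}^{\downarrow}(A(x))=\lambda_{\min}(A_{t_0})$ occurs with total multiplicity at least the multiplicity it has in $A_{t_0}$ plus the multiplicity it has in $A_{t_0'}$, i.e.\ at least $2$. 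Hence the minimum eigenvalue of $D\hat{F}(x)+\hat{e}_2P_{\phi(\mathcal{M}_n)}$ has multiplicity strictly larger than one, for every $x\in\phi(\mathcal{C}_0)$.

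Finally, this is precisely the hypothesis of Theorem~\ref{thm7} with $e_2=\hat{e}_2>0$, so Theorem~\ref{thm7} gives that the monotonicity of $\mathcal{F}+N_{\mathcal{C}}$ is globally elicited at level $s\ge\hat{e}_2$, completing the proof. There is essentially no analytical obstacle in this argument; the only point deserving care is the elementary bookkeeping that the global minimum eigenvalue of a symmetric block-diagonal matrix is realized inside a single block and hence, through the block pairing, is realized at least twice across the full spectrum, so that the multiplicity hypothesis of Theorem~\ref{thm7} is met with the \emph{same} elicitation level $\hat{e}_2$.
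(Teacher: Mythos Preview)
Your proposal is correct and follows essentially the same approach as the paper: both reduce the corollary to Theorem~\ref{thm7} by observing that the spectrum of the symmetric block-diagonal matrix $D\hat{F}(x)+\hat{e}_2P_{\phi(\mathcal{M}_n)}$ is the union (with multiplicity) of the spectra of its blocks, so the pairing $A_{i_l}=A_{j_l}$ forces eigenvalues to occur at least twice. The only cosmetic difference is that the paper notes \emph{every} eigenvalue has multiplicity larger than one, whereas you isolate the minimum eigenvalue; since Theorem~\ref{thm7} only requires the latter, your targeted version is equally valid.
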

\begin{proof}
It suffices to prove that the multiplicity of every eigenvalue of $D\hat{F}(x)+\hat{e}_2P_{\phi(\mathcal{M}_n)}$ is strictly larger than one. Actually, the eigenvalues of $D\hat{F}(x)+\hat{e}_2P_{\phi(\mathcal{M}_n)}$ are the aggregation of the eigenvalues of all blocks. Since two same blocks have same eigenvalues, the multiplicity of every eigenvalue of $D\hat{F}(x)+\hat{e}_2P_{\phi(\mathcal{M}_n)}$ is strictly larger than one.
\end{proof}

\begin{theorem}\label{thm8}
Consider SVI (\ref{eq1}). Let $\mathcal{C}_0\supset\mathcal{C}$ be an open convex set in $\mathcal{L}_n$, and  $\mathcal{F}:\mathcal{C}_0\rightarrow\mathcal{L}_n$ be continuously differentiable and pseudomonotone on $\mathcal{C}_0$. Let $D\hat{F}(x)$ be symmetric with $\hat{F}$ defined in (\ref{eq2.5}) for $x\in\phi(\mathcal{C}_0)$. If $e_{3}>0$ satisfies the following conditions:
\begin{equation}\label{eq8-1}
\begin{array}{l}
(i)  (D\hat{F}(x))_{ii}+e_{3}(P_{\phi(\mathcal{M}_n)})_{ii}>0;\\
(ii)\ \text{$D\hat{F}(x)+e_{3}P_{\phi(\mathcal{M}_n)}$ is strictly diagonally dominant}, i.e.,
\end{array}
\end{equation}
\begin{equation*}
(D\hat{F}(x))_{ii}+e_{3}(P_{\phi(\mathcal{M}_n)})_{ii}>\sum\limits_{j\neq i}|(D\hat{F}(x))_{ij}+e_{3}(P_{\phi(\mathcal{M}_n)})_{ij}|
\end{equation*}
for $i=1,2,\ldots,\bar{n}$ and $x\in\phi(\mathcal{C}_0)$, then the monotonicity of $\mathcal{F}+N_{\mathcal{C}}$ is globally elicited at level $s\geq e_{3}$.
\end{theorem}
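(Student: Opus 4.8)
The plan is to mimic the argument of Theorem \ref{thm6}, replacing the eigenvalue/commutation analysis by the diagonal-dominance machinery of Lemma \ref{std2}. The key observation is that condition $(ii)$ in \eqref{eq8-1} together with $(i)$ says precisely that, for every $x\in\phi(\mathcal{C}_0)$, the matrix $D\hat{F}(x)+e_3 P_{\phi(\mathcal{M}_n)}$ is symmetric (since $D\hat{F}(x)$ is symmetric and $P_{\phi(\mathcal{M}_n)}$ is an orthogonal projection, hence symmetric), strictly diagonally dominant, and has strictly positive diagonal entries. Therefore Lemma \ref{std2} applies and yields that $D\hat{F}(x)+e_3 P_{\phi(\mathcal{M}_n)}$ is positive definite for every $x\in\phi(\mathcal{C}_0)$.

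First I would record that $D\hat{F}(x)+e_3 P_{\phi(\mathcal{M}_n)}$ is the Jacobian of the mapping $\hat{F}+e_3 P_{\phi(\mathcal{M}_n)}$ on the open convex set $\phi(\mathcal{C}_0)$; since this Jacobian is (uniformly, pointwise) positive definite there, \cite[Proposition 2.3.2]{Facchinei2003} gives that $\hat{F}+e_3 P_{\phi(\mathcal{M}_n)}$ is monotone (indeed strictly monotone) on $\phi(\mathcal{C}_0)$, and in particular on $\phi(\mathcal{C})\subset\phi(\mathcal{C}_0)$. Next, exactly as in the proof of Theorem \ref{thm6}, I invoke \cite[Theorem 3]{Rockafellar1970} to conclude that the sum $\hat{F}+N_{\phi(\mathcal{C})}+e_3 P_{\phi(\mathcal{M}_n)}$ is maximal monotone, since it is the sum of the monotone continuous single-valued map $\hat{F}+e_3 P_{\phi(\mathcal{M}_n)}$ and the maximal monotone normal-cone operator $N_{\phi(\mathcal{C})}$. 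Finally, the isomorphism $\phi$ of Section \ref{iso} transfers maximal monotonicity back: by the contradiction argument already used in the proof of Theorem \ref{thm5} (using Proposition \ref{prop1} to identify $\phi(N_{\mathcal{C}})=N_{\phi(\mathcal{C})}$, $\phi(P_{\mathcal{M}_n})=P_{\phi(\mathcal{M}_n)}$ and that $\phi$ is a linear isometry), maximal monotonicity of $\hat{F}+N_{\phi(\mathcal{C})}+e_3 P_{\phi(\mathcal{M}_n)}$ forces maximal monotonicity of $\mathcal{F}+N_{\mathcal{C}}+e_3 P_{\mathcal{M}_n}$. For $s\ge e_3$ one repeats the same chain with $s$ in place of $e_3$ (strict diagonal dominance with positive diagonal is preserved under increasing the $P_{\phi(\mathcal{M}_n)}$-coefficient, because $(P_{\phi(\mathcal{M}_n)})_{ii}\ge 0$ and the extra contribution to row $i$ is $s(P_{\phi(\mathcal{M}_n)})_{ii}$ on the diagonal against at most $(s-e_3)\sum_{j\ne i}|(P_{\phi(\mathcal{M}_n)})_{ij}|$ off-diagonal; since $P_{\phi(\mathcal{M}_n)}$ itself is diagonally dominant one checks the inequality persists), so the monotonicity of $\mathcal{F}+N_{\mathcal{C}}$ is globally elicited at every level $s\ge e_3$.

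I expect the only genuinely delicate point to be the claim that strict diagonal dominance of $D\hat{F}(x)+e_3 P_{\phi(\mathcal{M}_n)}$ is inherited by $D\hat{F}(x)+sP_{\phi(\mathcal{M}_n)}$ for $s\ge e_3$; everything else is a direct citation chain (Lemma \ref{std2} $\to$ \cite[Proposition 2.3.2]{Facchinei2003} $\to$ \cite[Theorem 3]{Rockafellar1970} $\to$ the $\phi$-transfer of Theorem \ref{thm5}). The clean way to handle that obstacle is to write $D\hat{F}(x)+sP_{\phi(\mathcal{M}_n)}=\bigl(D\hat{F}(x)+e_3 P_{\phi(\mathcal{M}_n)}\bigr)+(s-e_3)P_{\phi(\mathcal{M}_n)}$ and use that $P_{\phi(\mathcal{M}_n)}$, being an orthogonal projection, is symmetric positive semidefinite and diagonally dominant with nonnegative diagonal, so the sum of a strictly diagonally dominant matrix (positive diagonal) and a nonnegative multiple of such a projection is again strictly diagonally dominant with positive diagonal; then Lemma \ref{std2} again gives positive definiteness and the rest goes through verbatim. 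If one prefers to avoid even this lemma, an alternative is to argue directly at the level of positive definiteness: $\langle y,(D\hat{F}(x)+sP_{\phi(\mathcal{M}_n)})y\rangle=\langle y,(D\hat{F}(x)+e_3P_{\phi(\mathcal{M}_n)})y\rangle+(s-e_3)\|P_{\phi(\mathcal{M}_n)}y\|^2\ge 0$, the first term being positive by the $e_3$-case and the second nonnegative, which already suffices for monotonicity of $\hat{F}+sP_{\phi(\mathcal{M}_n)}$ and hence for the conclusion.
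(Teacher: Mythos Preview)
Your proposal is essentially correct and follows the same path as the paper: Lemma~\ref{std2} gives positive definiteness of $D\hat{F}(x)+e_3P_{\phi(\mathcal{M}_n)}$, and then one passes to $s\ge e_3$ by adding the positive semidefinite matrix $(s-e_3)P_{\phi(\mathcal{M}_n)}$. The paper does this last step via \cite[Corollary~4.3.15]{Roger2013} (Weyl's inequality), which is exactly your ``alternative'' quadratic-form argument $\langle y,(D\hat{F}(x)+sP_{\phi(\mathcal{M}_n)})y\rangle=\langle y,(D\hat{F}(x)+e_3P_{\phi(\mathcal{M}_n)})y\rangle+(s-e_3)\|P_{\phi(\mathcal{M}_n)}y\|^2$; after that the chain through \cite[Proposition~2.3.2]{Facchinei2003}, \cite[Theorem~3]{Rockafellar1970} and the $\phi$-transfer is identical to Theorems~\ref{thm5}--\ref{thm6}.

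One point to flag: your \emph{first} route for $s\ge e_3$, namely showing that strict diagonal dominance itself persists, rests on the claim that an orthogonal projection is always diagonally dominant. That is false in general (e.g.\ the rank-one projection onto $(1,1,1)^T/\sqrt{3}$ in $\mathbb{R}^3$ has $P_{ii}=1/3<2/3=\sum_{j\ne i}|P_{ij}|$; the paper itself only says projection matrices are ``usually'' diagonally dominant). So that branch does not go through as stated; your alternative argument is the one to keep, and it coincides with the paper's.
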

\begin{proof}
By Lemma \ref{std2}, the matrix $D\hat{F}(x)+e_{3}P_{\phi(\mathcal{M}_n)}$ is positive semidefinite. Hence, by \cite[Corollary 4.3.15]{Roger2013}, $D\hat{F}(x)+sP_{\phi(\mathcal{M}_n)}$ is positive semidefinite when $s\geq e_{3}$. Thus the monotonicity of $\mathcal{F}+N_{\mathcal{C}}$ is globally elicited at level $s\geq e_{3}$.
\end{proof}

Note that the projection matrices have nonnegative diagonal elements and are usually diagonally dominant. Define the index set
\begin{equation*}
I=\{1\leq i\leq\bar{n}:P_{\phi(\mathcal{M}_n)})_{ii}>0\}.
\end{equation*}
Let $P_{\phi(\mathcal{M}_n)}^{I}$ be the submatrix of $P_{\phi(\mathcal{M}_n)}$ with the $i$th row and the $i$th column  removed for $i\notin I$. We have the following result.
\begin{corollary}
Consider SVI (\ref{eq1}). Let $\mathcal{C}_0\supset\mathcal{C}$ be an open convex set in $\mathcal{L}_n$, and  $\mathcal{F}:\mathcal{C}_0\rightarrow\mathcal{L}_n$ be continuously differentiable and pseudomonotone on $\mathcal{C}_0$. Let $D\hat{F}(x)$ be symmetric with $\hat{F}$ defined in (\ref{eq2.5}) for $x\in\phi(\mathcal{C}_0)$. Assume that
\begin{equation*}
\begin{array}{rcl}
(i) && \text{$P_{\phi(\mathcal{M}_n)}^{I}$ is strictly diagonally dominant};\\
(ii)&& \text{the $i$th row  of $D\hat{F}(x)$ and $P_{\phi(\mathcal{M}_n)}$ are zero for all $i\notin I$ and $x\in\phi(\mathcal{C}_0)$.}
\end{array}
\end{equation*}
Define
\begin{equation*}
\hat{e}_3=\max\left\{\sup\limits_{x\in\phi(\mathcal{C}_0),i\in I}\frac{\sum\limits_{j\neq i}|(D\hat{F}(x))_{ij}|-(D\hat{F}(x))_{ii}}{(P_{\phi(\mathcal{M}_n)})_{ii}-\sum\limits_{j\neq i}|(P_{\phi(\mathcal{M}_n)})_{ij}|}, \sup\limits_{x\in\phi(\mathcal{C}_0),i\in I}\{-(D\hat{F}(x))_{ii}\}\right\}.
\end{equation*}
If $\hat{e}_{3}<+\infty$, then the monotonicity of $\mathcal{F}+N_{\mathcal{C}}$ is globally elicited at level $s>\hat{e}_3$.
\end{corollary}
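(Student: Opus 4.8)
The plan is to reduce everything to the tail of the proof of Theorem \ref{thm8}. Fix an arbitrary $s>\hat e_3$; the heart of the argument is to show that the symmetric matrix $G_s(x):=D\hat F(x)+sP_{\phi(\mathcal M_n)}$ is positive semidefinite for every $x\in\phi(\mathcal C_0)$. Once that is established, everything else is the standard chain already used for Theorem \ref{thm8}: since $\phi$ is a linear isometric isomorphism (Lemma \ref{lem1}), $\phi(\mathcal C_0)$ is open and convex and $\hat F$ is continuously differentiable on it with Jacobian $D\hat F$, so positive semidefiniteness of $G_s$ on $\phi(\mathcal C_0)$ makes $\hat F+sP_{\phi(\mathcal M_n)}$ monotone on $\phi(\mathcal C_0)\supseteq\phi(\mathcal C)$ by \cite[Proposition 2.3.2]{Facchinei2003}; then $\hat F+N_{\phi(\mathcal C)}+sP_{\phi(\mathcal M_n)}$ is maximal monotone by \cite[Theorem 3]{Rockafellar1970}; and the isometry argument from the proof of Theorem \ref{thm5} transfers this to maximal monotonicity of $\mathcal F+N_{\mathcal C}+sP_{\mathcal M_n}$. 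As $s>\hat e_3$ was arbitrary, the monotonicity of $\mathcal F+N_{\mathcal C}$ is then globally elicited at level $s>\hat e_3$.

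To prove the semidefiniteness claim I would first cut the dimension down to $I$. By hypothesis (ii), for $i\notin I$ the $i$th row of $D\hat F(x)$ and of $P_{\phi(\mathcal M_n)}$ vanishes, and since both matrices are symmetric so do the $i$th columns; hence $G_s(x)$ has zero $i$th row and column for each $i\notin I$, and after the symmetric permutation putting the indices of $I$ first, $G_s(x)\cong\mathrm{diag}(G_s^I(x),0)$, where $G_s^I(x)$ is the principal submatrix of $G_s(x)$ indexed by $I$. Thus it suffices to show $G_s^I(x)$ is positive definite, for which I would use Lemma \ref{std2} after checking that $G_s^I(x)$ is symmetric, strictly diagonally dominant and has positive diagonal. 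Fix $i\in I$. Because the entries of $D\hat F(x)$ and $P_{\phi(\mathcal M_n)}$ outside the rows/columns indexed by $I$ vanish, $\sum_{j\neq i}|(D\hat F(x))_{ij}|=\sum_{j\in I,\,j\neq i}|(D\hat F(x))_{ij}|$ (and similarly for $P_{\phi(\mathcal M_n)}$), and the denominator in $\hat e_3$ equals $(P_{\phi(\mathcal M_n)}^{I})_{ii}-\sum_{j\in I,\,j\neq i}|(P_{\phi(\mathcal M_n)}^{I})_{ij}|$, which is positive by hypothesis (i) and $i\in I$. Rearranging $s>\hat e_3$ (first term) and using the triangle inequality then gives, for the $i$th row of $G_s^I(x)$,
\begin{equation*}
\sum_{j\in I,\,j\neq i}\bigl|(G_s(x))_{ij}\bigr|\le\sum_{j\neq i}|(D\hat F(x))_{ij}|+s\sum_{j\neq i}|(P_{\phi(\mathcal M_n)})_{ij}|<(D\hat F(x))_{ii}+s(P_{\phi(\mathcal M_n)})_{ii}=(G_s(x))_{ii},
\end{equation*}
so the diagonal entry is strictly larger than the absolute row sum and is in particular positive (the second term in $\hat e_3$, $s>-(D\hat F(x))_{ii}$, gives positivity of the diagonal directly as well). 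Hence $G_s^I(x)$ is positive definite and $G_s(x)$ positive semidefinite.

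The step I expect to be the main obstacle is exactly this degenerate-rows bookkeeping: Theorem \ref{thm8} cannot be invoked verbatim, since the full matrix $D\hat F(x)+sP_{\phi(\mathcal M_n)}$ is only positive semidefinite (its rows outside $I$ are zero) rather than strictly diagonally dominant, so one must peel off the $I$-block by hand and then be careful that the sums over $j\neq i$ and over $j\in I,\,j\neq i$ coincide — which is precisely where symmetry of $D\hat F(x)$ and $P_{\phi(\mathcal M_n)}$ together with hypothesis (ii) are used, and where hypothesis (i) is needed to guarantee the denominator defining $\hat e_3$ is strictly positive so that the rearrangement above is legitimate. Everything after that is a verbatim repetition of the final part of the proof of Theorem \ref{thm8}.
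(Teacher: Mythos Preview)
Your proposal is correct and follows essentially the same route as the paper: restrict to the principal submatrix indexed by $I$ (using hypothesis (ii) and symmetry to kill the rows and columns outside $I$), verify via the definition of $\hat e_3$ and hypothesis (i) that this submatrix is strictly diagonally dominant with positive diagonal, apply Lemma~\ref{std2} to get positive definiteness of the $I$-block and hence positive semidefiniteness of $D\hat F(x)+sP_{\phi(\mathcal M_n)}$, and then finish exactly as in Theorem~\ref{thm8}. One small aside: your parenthetical that the second term of $\hat e_3$ ``gives positivity of the diagonal directly'' is not quite right (since $(P_{\phi(\mathcal M_n)})_{ii}$ may be strictly less than $1$), but it is also unnecessary, as you already obtain $(G_s(x))_{ii}>0$ from the strict diagonal dominance itself.
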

\begin{proof}
If $s>\hat{e}_3$, then $D\hat{F}^{I}(x)+sP_{\phi(\mathcal{M}_n)}^{I}$ is positive semidefinite for $x\in\phi(\mathcal{C}_0)$ from Lemma \ref{std2}. Since the $i$th row  and the $i$th column  of $D\hat{F}(x)$ and $P_{\phi(\mathcal{M}_n)}$ are zero for all $i\notin I$ and $x\in\phi(\mathcal{C}_0)$, the matrix $D\hat{F}(x)+sP_{\phi(\mathcal{M}_n)}$ is positive semidefinite. Hence, the desired result holds.
\end{proof}

\section{Numerical experiments} \label{nume}
In this section, we demonstrate the effectiveness of the elicited PHA in solving a two-stage pseudomonotone stochastic linear complementarity problem (SLCP). The two-stage SLCP is given as a special case of
\begin{equation*}
-\mathcal{F}(x(\cdot))-w(\cdot)\in N_{\mathcal{C}}(x(\cdot)),
\end{equation*}
where $F(x(\xi),\xi)=M(\xi)x(\xi)+q(\xi)$ with $M(\xi)\in \mathbb{R}^{n\times n}$ and $q(\xi)\in \mathbb{R}^n$, $C(\xi)=\mathbb{R}^n_{+}$, and $x(\xi)=(x_{1}(\xi)^T,x_{2}(\xi)^T)^T$ with $x_{i}(\xi)\in \mathbb{R}^{n_{i}}$ being the $i$-th stage decision vector for $i=1,2$. In this model, the nonanticipativity subspace is described as $\mathcal{N}_n=\{x(\cdot)=(x_1(\cdot),x_2(\cdot)):x_1(\xi)\ \text{is same for all}\ \xi\in\Xi\}$, and the corresponding complement is $\mathcal{M}_n=\{w(\cdot)=(w_1(\cdot),w_2(\cdot)):E_{\xi}(w_{1}(\xi))=0,w_{2}(\xi)=0\ \text{for all}\ \xi\in\Xi\}$. By denoting
\begin{equation*}
M(\xi)=\left[
\begin{array}{cc}
M_{11}(\xi)&M_{12}(\xi)\\
M_{21}(\xi)&M_{22}(\xi)
\end{array}\right],\ q(\xi)=\left[\begin{array}{c}
q_{1}(\xi)\\q_{2}(\xi)
\end{array}\right]
\end{equation*}
for $\xi\in\Xi$, where $M_{ij}(\xi)\in \mathbb{R}^{n_i\times n_j}$, $q_i(\xi)\in \mathbb{R}^{n_i}$ with $1\leq i,j\leq2$, the extensive form of the two-stage SLCP is formulated as
\begin{equation} \label{eq8}
\left\{\begin{array}{l}
0\leq x_{1}\perp M_{11}(\xi)x_{1}+M_{12}(\xi)x_{2}(\xi)+q_{1}(\xi)+w_{1}(\xi)\geq0,\\
0\leq x_{2}(\xi)\perp M_{21}(\xi)x_{1}+M_{22}(\xi)x_{2}(\xi)+q_{2}(\xi)\geq0\quad \forall \xi\in\Xi.
\end{array}\right.
\end{equation}

In the execution of Algorithm $\ref{alg1}$, we use the semismooth Newton method \cite{QS} to solve (\ref{eqalg1}). It is worth mentioning that we take $\hat{x}^k(\xi)$ as the starting point  in the $k+1$ iteration, which is termed the ``warm start" feature of PHA in \cite{Rockafellar2019}. In addition, inspired by \cite{Zhang2019} and \cite{Eckstein1992}, we change the step size in the dual update to be
\begin{equation*}
w^{k+1}(\xi)=w^k(\xi)+\rho(r-s)({\hat{x}}^k(\xi)-x^{k+1}(\xi)),
\end{equation*}
where $\rho=1.618$, and $s$ is supposed to be $r/2$.

If $x(\xi)$ and $w(\xi)$ solve (\ref{eqalg1}), then $x(\xi)$ and $w(\xi)$ also satisfy
\begin{equation} \label{eq9}
\left\{\begin{array}{l}
0\leq x_{1}\perp E_{\xi}(M_{11}(\xi)x_{1})+E_{\xi}(M_{12}(\xi)x_{2}(\xi))+E_{\xi}(q_{1}(\xi))\geq0,\\
0\leq x_{2}(\xi)\perp M_{21}(\xi)x_{1}+M_{22}(\xi)x_{2}(\xi)+q_{2}(\xi)\geq0\quad \forall \xi\in\Xi,
\end{array}\right.
\end{equation}
as is obtained by taking the expectation in the first subproblem of (\ref{eq8}). Thus the stopping criterion can be designated as
\begin{equation*}
err=\max\{err_1,err_2\},
\end{equation*}
where
\begin{equation*}
err_1=\frac{\Vert x_1-\Pi_{\geq0}(x_1-(E_{\xi}(M_{11}(\xi)x_{1})+E_{\xi}(M_{12}(\xi)x_{2}(\xi))+E_{\xi}(q_{1}(\xi))))\Vert}{1+\Vert x_1\Vert},
\end{equation*}
\begin{equation*}
err_2=\max\limits_{\xi}\left\{\frac{\Vert x_2(\xi)-\Pi_{\geq0}(x_2(\xi)-( M_{21}(\xi)x_{1}+M_{22}(\xi)x_{2}(\xi)+q_{2}(\xi)))\Vert}{1+\Vert x_2(\xi)\Vert}\right\},
\end{equation*}
with $(\Pi_{\geq0}(a))_{j}=\max\{a_{j},0\}$. Set the tolerance to be $10^{-5}$ and the maximal iterations to be $2000$. In the next subsections, we apply Algorithm \ref{alg1} to solve a two-stage pseudomonotone SVI from real life and some randomly generated problems.

\subsection{Test on a two-stage orange market model}
Consider a two-stage orange market model in \cite{McCarl1977}, where the supply and demand curves are linear. Specifically, an orange firm mainly sells two kinds of products. One is the juice converted from oranges, and the other one is exactly the fresh oranges. Assume that the producer makes $1$ unit of juice from $2$ oranges. In the first stage, the firm has to decide the supply quantity ($Q_{S}$) with the supply price ($P_{S}$) determined as follows:
\begin{equation}\label{ora-1}
P_{S}=3+0.0005Q_{S}.
\end{equation}
In the second stage, the orange firm needs to determine the quantity of the juice ($Q_{J}$) and the quantity of fresh oranges ($Q_{F}$). Similarly, the price for orange juice ($P_{J}$) and price for fresh oranges ($P_{F}$) are related linearly to quantity $Q_{J}$ and $Q_{F}$, i.e.,
\begin{equation}\label{ora-2}
\begin{bmatrix}
P_{J}\\
P_{F}
\end{bmatrix}
=M
\begin{bmatrix}
Q_{J}\\
Q_{F}
\end{bmatrix}
+q,
\end{equation}
where $M\in \mathbb{R}^{2\times2}$ and $q\in \mathbb{R}^{2\times1}$. However, due to the uncertainties involving the climate, natural disaster, water resources and so on, linear relationship (\ref{ora-2}) may not be fixed. In our setting, three scenarios of uncertainties, $\xi^{1}$, $\xi^{2}$, $\xi^{3}$, with respective probabilities, 0.5, 0.3, 0.2, are considered. Then we denote the quantity variables $Q_{J}$ and $Q_{F}$ and price variables $P_{J}$ and $P_{F}$ for every scenario $\xi\in\Xi$ by $Q_{J}(\xi)$, $Q_{F}(\xi)$ and $P_{J}(\xi)$, $P_{F}(\xi)$ respectively, where $\Xi:=\{\xi^{1},\xi^{2},\xi^{3}\}$. Define $M$ and $q$, for $\xi\in\Xi$, as
\begin{equation}\label{ora-3}
\begin{aligned}
M(\xi^{1})=
\begin{bmatrix}
-0.005& -0.0002\\
-0.0002& -0.001
\end{bmatrix},
q(\xi^{2})=
\begin{bmatrix}
7.5\\
4
\end{bmatrix},\\
M(\xi^{2})=
\begin{bmatrix}
-0.004& -0.0001\\
-0.0001& -0.0005
\end{bmatrix},
q(\xi^{2})=
\begin{bmatrix}
7\\
3.5
\end{bmatrix},\\
M(\xi^{3})=
\begin{bmatrix}
-0.006& -0.0003\\
-0.0003& -0.0015
\end{bmatrix},
q(\xi^{3})=
\begin{bmatrix}
8\\
4.5
\end{bmatrix}.
\end{aligned}
\end{equation}
We build up the following two-stage optimization model:
\begin{equation}\label{ora-4}
\begin{aligned}
\min \quad& P_{S}Q_{S}+E_{\xi}[\phi(Q_{S},Q_{J}(\xi),Q_{F}(\xi))]\\
\mbox{s.t.}\quad& Q_{S}\geq0,
\end{aligned}
\end{equation}
where $\phi(Q_{S},Q_{J}(\xi),Q_{F}(\xi))$ is the optimal value of the second-stage optimization:
\begin{equation}\label{ora-5}
\begin{aligned}
\min\quad & -P_{J}(\xi)Q_{J}(\xi)-P_{F}(\xi)Q_{F}(\xi)\\
\mbox{s.t.}\quad& 2Q_{J}(\xi)+Q_{F}(\xi)\leq Q_{S}.
\end{aligned}
\end{equation}
Substituting (\ref{ora-1})-(\ref{ora-3}) into (\ref{ora-4})-(\ref{ora-5}), we get
\begin{equation}\label{ora-6}
\begin{aligned}
\min \quad& 0.0005Q_{S}^{2}+3Q_{S}+E_{\xi}[\phi(Q_{S},Q_{J}(\xi),Q_{F}(\xi))]\\
\mbox{s.t.}\quad& Q_{S}\geq0,
\end{aligned}
\end{equation}
where $\phi(Q_{S},Q_{J}(\xi),Q_{F}(\xi))$ is the optimal value of the second-stage optimization:
\begin{equation}\label{ora-7}
\begin{aligned}
\min\quad &
-
\begin{bmatrix}
Q_{J}(\xi)& Q_{F}(\xi)
\end{bmatrix}
M(\xi)
\begin{bmatrix}
Q_{J}(\xi)\\
Q_{F}(\xi)
\end{bmatrix}
-q(\xi)^{T}
\begin{bmatrix}
Q_{J}(\xi)\\
Q_{F}(\xi)
\end{bmatrix}\\
\mbox{s.t.}\quad& Q_{S}-
\begin{bmatrix}
2&1
\end{bmatrix}
\begin{bmatrix}
Q_{J}(\xi)\\
Q_{F}(\xi)
\end{bmatrix}
\geq0.
\end{aligned}
\end{equation}
The necessary  optimality condition of  (\ref{ora-6})-(\ref{ora-7}) is that there exist $\omega(\cdot)$ and dual vector $\eta(\cdot)$ such that, for $\xi\in\Xi$, the following condition holds \cite{Zhang2019}:
{\footnotesize \begin{equation}\label{ora-8}
\begin{aligned}
0\leq
\begin{bmatrix}
Q_{S}\\
Q_{J}(\xi)\\
Q_{F}(\xi)\\
\eta(\xi)
\end{bmatrix}
\perp
\begin{bmatrix}
0.001&0&0&-1\\
0&2M_{11}(\xi)&2M_{12}(\xi)&2\\
0&2M_{21}(\xi)&2M_{22}(\xi)&1\\
1&-2&-1&0
\end{bmatrix}
\begin{bmatrix}
Q_{S}\\
Q_{J}(\xi)\\
Q_{F}(\xi)\\
\eta(\xi)
\end{bmatrix}+
\begin{bmatrix}
3\\
q_{1}(\xi)\\
q_{2}(\xi)\\
0
\end{bmatrix}+
\begin{bmatrix}
\omega(\xi)\\
0\\
0\\
0
\end{bmatrix}
\geq0,
\end{aligned}
\end{equation}}
which is exactly a two-stage SLCP with $x_{1}(\xi)=Q_{S}$, $x_{2}(\xi)=[Q_{J}(\xi),Q_{F}(\xi),\eta(\xi)]^{T}$, $C(\xi)=\mathbb{R}_{+}^{4}$, and
\begin{equation*}
F(x(\xi),\xi)=
\begin{aligned}
\begin{bmatrix}
0.001&0&0&-1\\
0&2M_{11}(\xi)&2M_{12}(\xi)&2\\
0&2M_{21}(\xi)&2M_{22}(\xi)&1\\
1&-2&-1&0
\end{bmatrix}
x(\xi)+
\begin{bmatrix}
3\\
q_{1}(\xi)\\
q_{2}(\xi)\\
0
\end{bmatrix}.
\end{aligned}
\end{equation*}

Obviously, $\mathcal{F}(x(\cdot))$ is pseudomonotone, and its monotonicity can be globally elicited at any level $s\ge0$ via Theorem \ref{thm5}. We use Algorithm \ref{alg1} to solve (\ref{ora-8}). The algorithm ends up with the solution: $Q_{S}=393$, $Q_{J}(\xi^{1})=56$, $Q_{F}(\xi^{1})=281$, $Q_{J}(\xi^{2})=64$, $Q_{F}(\xi^{2})=265$, $Q_{J}(\xi^{3})=52$, $Q_{F}(\xi^{3})=288$. The corresponding price solution is $P_{S}=4.96$, $P_{J}(\xi^{1})=7.16$, $P_{F}(\xi^{1})=3.71$, $P_{J}(\xi^{2})=6.72$, $P_{F}(\xi^{2})=3.36$, $P_{J}(\xi^{3})=7.60$, $Q_{F}(\xi^{3})=4.05$.

\subsection{Test on randomly generated problems}
We implement Algorithm \ref{alg1} to solve the randomly generated numerical SLCPs. Assume that the space $\Xi$ has $J$ scenarios denoted as $\xi^{1},\xi^{2},\ldots,\xi^{J}$. Then the two-stage pseudomonotone SLCPs are generated randomly based on Corollary 6.6.2 in \cite{Cambini2009}.
\begin{itemize}
  \item Set $M(\xi^1)=ab^T+ba^T$, $q(\xi^1)=b_0a+a_0b+c$, and $c=\alpha a+\beta b$, where $a,b\in\mathbb{R}^n$ with $a\geq0,b\leq0$ are linearly independent, $a_0,b_0\in \mathbb{R}$ are randomly generated, and $\alpha,\beta$ is arbitrarily selected as long as $\alpha<-b_{0}$ and $\beta>-a_0$.
  \item Set $s=\lceil 3n/4\rceil$. Randomly generate number $a_{i}(\xi^{k})>0$ and vector $v_{i}(\xi^{k})\in \mathbb{R}^{n}$ for $i=1,2,\ldots,s$ and $k=2,\ldots,J$. Let $M(\xi^{k})=\sum\limits^{s}_{i=1}a_{i}(\xi^{k})v_{i}(\\\xi^{k})v_{i}(\xi^{k})^T$. Randomly generate $q(\xi^{k})\in \mathbb{R}^n$ for $k=2,\ldots,J$.
  \item Randomly generate the probabilities $p(\xi^{k})>0$ for $k=1,2,\ldots,J$.
\end{itemize}

We test three groups of two-stage pseudomonotone SLCPs listed as follows:
\begin{itemize}
  \item[{\bf G1:}] The dimensions of the problems ($dim$ for short) are set to be $[40,20]$. The number of the scenarios ($sn$ for short) in sample space $\Xi$ is increased from $50$ to $400$. 10 numerical examples are randomly generated for every setting of the problems. The numerical results including the average convergence iteration number (avg-iter for short) and average convergence iteration time (avg-time for short) are presented in Table \ref{table1} and Fig. \ref{fig1}.
  \item[{\bf G2:}] The number of the scenarios in sample space $\Xi$ is set to be $50$. The dimensions of the problems are increased from $[50,50]$ to $[400,400]$. 10 numerical examples are randomly generated for every setting of the problems. The numerical results including the average convergence iteration number and average convergence iteration time are presented in Table \ref{table2} and Fig. \ref{fig2}.
  \item[{\bf G3:}] Since the choice of $r$  is crucial in terms of the effectiveness of Algorithm \ref{alg1} \cite{Rockafellar2019}, we set the same $r=1$ and $r=\sqrt{n_1+n_2}$ as \cite{Rockafellar2019}, and randomly generate $10$ numerical examples respectively for each setting of the former two groups of problems.
\end{itemize}

\begin{remark}
Note that $F(x(\xi),\xi)$ is pseudomonotone if $\mathcal{F}(x(\cdot))$ is pseudomonotone, but the converse may be not true. Then, the set of pseudomonotone mappings $\mathcal{F}(x(\cdot))$ is contained in the set of mappings $\mathcal{F}(x(\cdot))$ with $F(x(\xi),\xi)$ being pseudomonotone for $\xi\in\Xi$. So, it is reasonable to design the above numerical experiments.
\end{remark}

\begin{table}[h]
\centering
\caption{Numerical results for the change of $sn$ ($dim$=[40,20])}
\label{table1}
\begin{tabular}{c c c c c}
\toprule
\multirow{2}{*}{$sn$} & \multicolumn{2}{c}{$r=1$}&\multicolumn{2}{c}{$r=\sqrt{60}$} \\
\cmidrule(r){2-3}\cmidrule(r){4-5}
& avg-iter& avg-time(s) &avg-iter &avg-time(s)\\
\hline
50  & 255.4 & 4.7  & 82.4 & 1.5\\
100 & 266.3 & 9.7  & 84.5 & 3.1\\
150 & 278.6 & 15.3 & 87.9 & 4.9\\
200 & 291.6 & 21.3 & 93.7 & 6.9\\
250 & 305.4 & 27.6 & 94.0   & 8.5\\
300 & 331.4 & 35.8 & 96.4 & 10.5\\
350 & 345.4 & 44.2 & 92.4 & 11.8\\
400 & 347.5 & 52.9 & 96.7 & 14.8\\
\bottomrule
\end{tabular}
\end{table}

\begin{figure} [h]
\begin{center}
\includegraphics[width=0.45\textwidth]{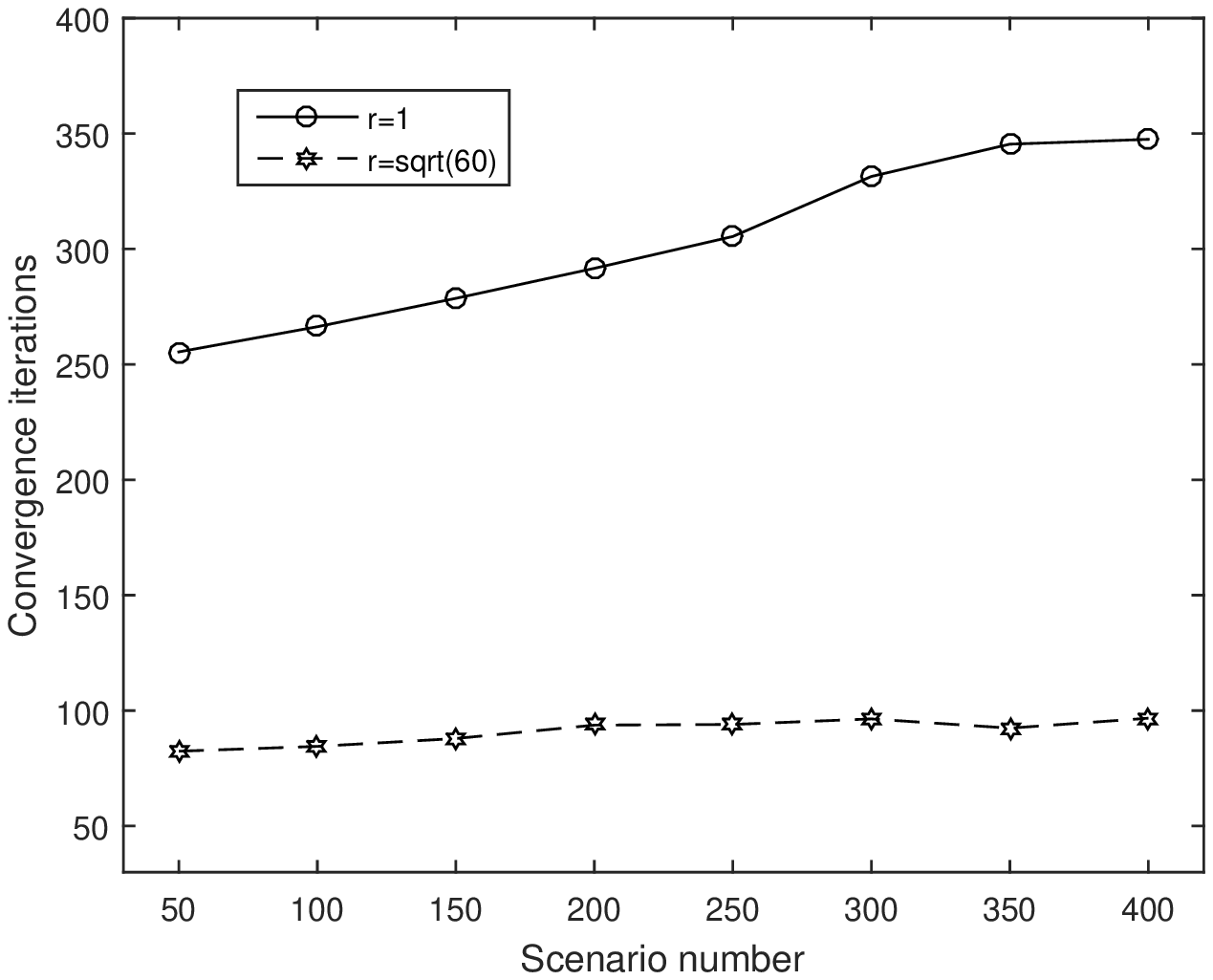}
\includegraphics[width=0.45\textwidth]{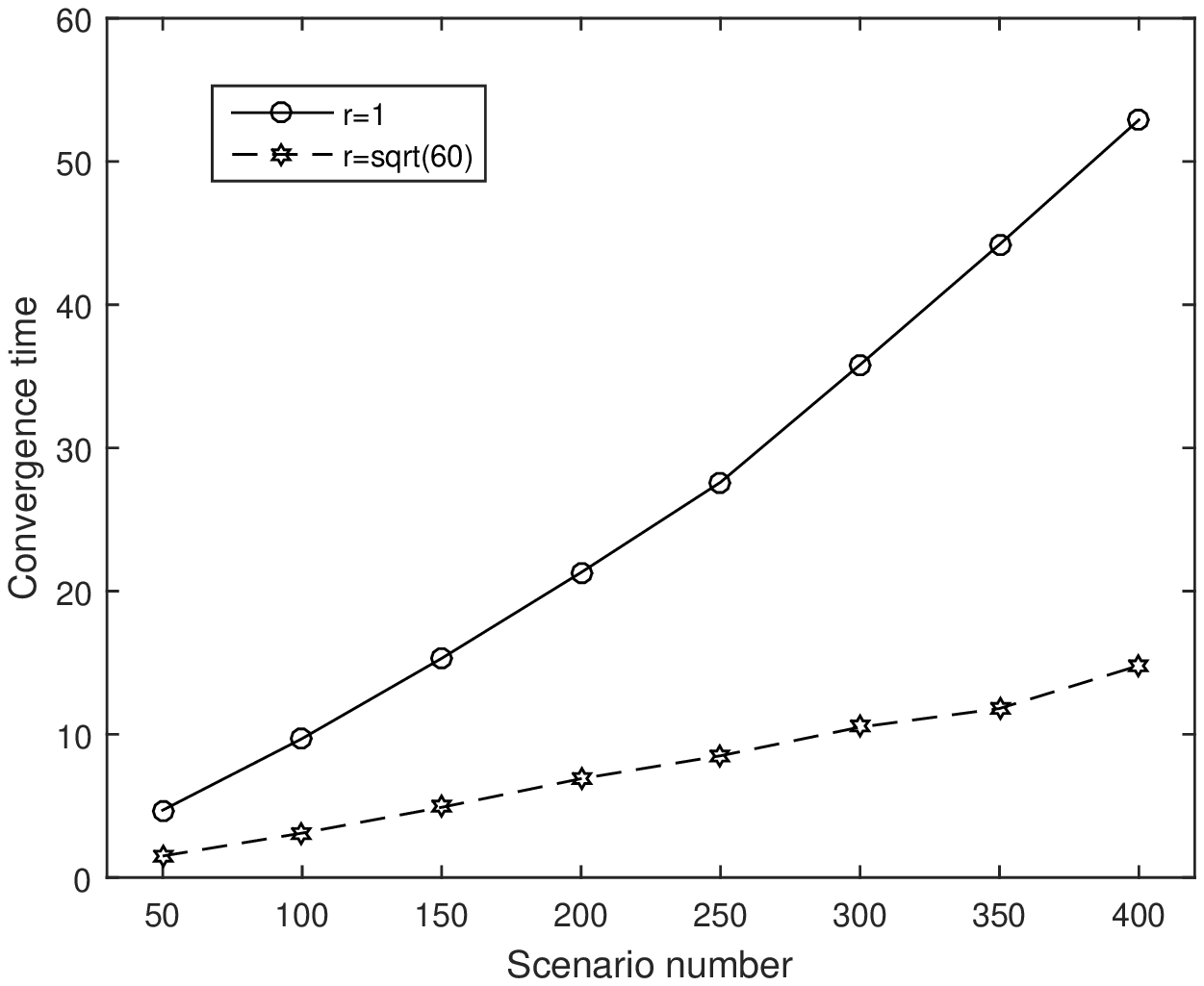}
\caption{Convergence results when $sn$ increases}
\label{fig1}
\end{center}
\end{figure}

\begin{table}[h]
\centering
\caption{Numerical results for the change of $dim$ ($sn$=50)}
\label{table2}
\begin{tabular}{c c c c c}
\toprule
$dim$ & \multicolumn{2}{c}{$r=1$}&\multicolumn{2}{c}{$r=\sqrt{n_1+n_2}$} \\
\cmidrule(r){2-3}\cmidrule(r){4-5}
& avg-iter& avg-time(s) &avg-iter &avg-time(s)\\
\hline
[50,50]   & 247.2  & 5.2   & 62.2 & 1.4\\{}
[100,100] & 473.4  & 13.7  & 37.4 & 1.2\\{}
[150,150] & 627.9  & 24.8  & 29.1 & 1.4\\{}
[200,200] & 840.0    & 62.9  & 27.7 & 2.6\\{}
[250,250] & 958.0   & 99.0   & 35.0   & 4.3\\{}
[300,300] & 1177.8 & 200.0   & 42.4 & 8.2\\{}
[350,350] & 1462.4 & 326.8 & 50.0   & 12.7\\{}
[400,400] & 1575.3 & 536   & 52.3 & 20.0\\
\bottomrule
\end{tabular}
\end{table}

\begin{figure} [h]
\centering
\includegraphics[width=0.45\textwidth]{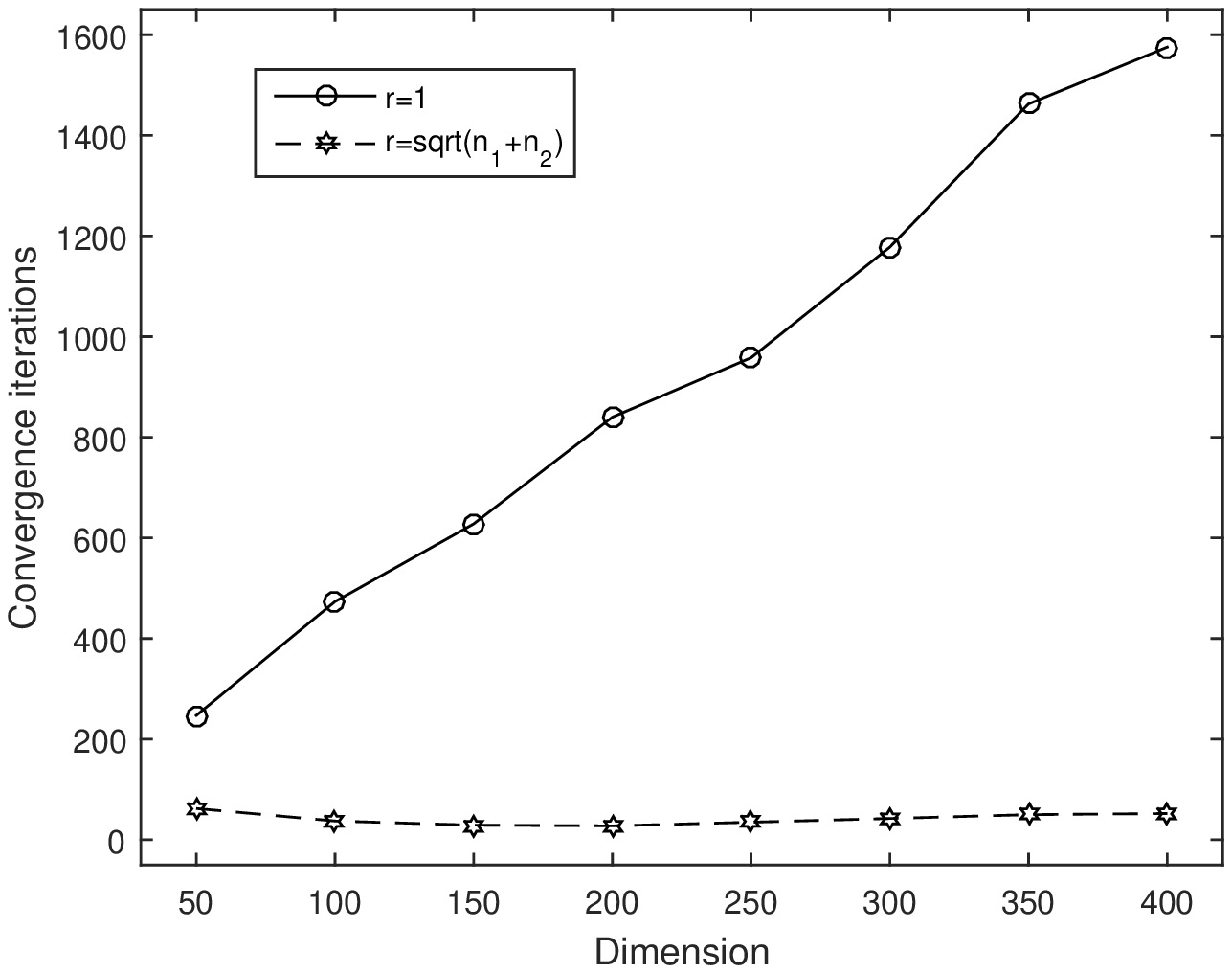}
\includegraphics[width=0.45\textwidth]{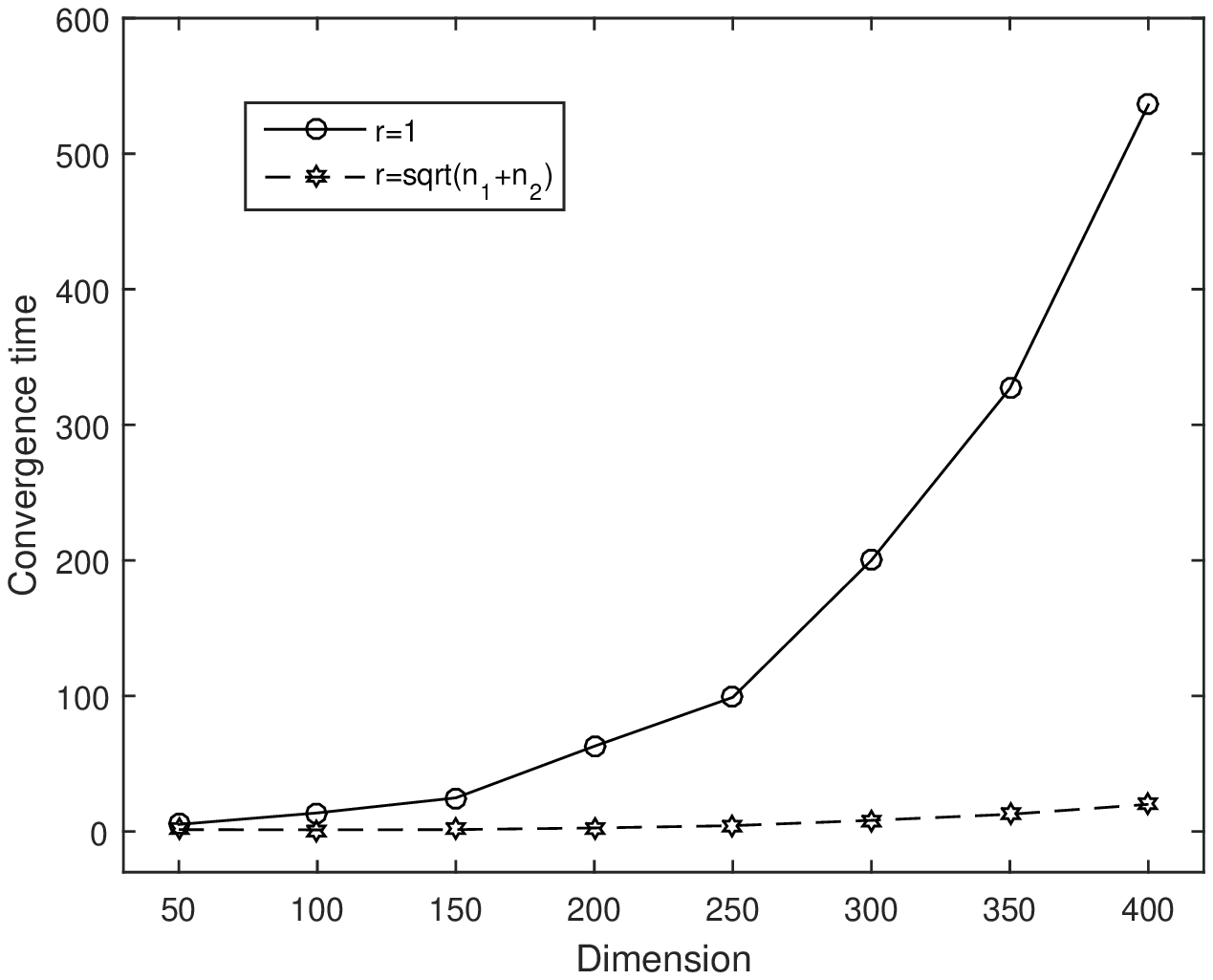}
\caption{Convergence results when $dim$ increases}
\label{fig2}
\end{figure}

We first fix the dimension of the decision vector to see the performance of Algorithm \ref{alg1}. From Table \ref{table1} and Fig.\ref{fig1},  we can see that when $r=1$, the iteration number and the computation time both grow roughly at a linear rate with the increasing of the scenario number. When $r=\sqrt{60}$, the computation time also grow roughly at a linear rate when the scenario number increases, while the growth rate of the iteration number is pretty slow. The performance of the elicited PHA with $r=\sqrt{60}$ is much better than that with $r=1$, which match with the numerical results given in \cite{Rockafellar2019}. Actually, the gap concerning the iteration number and the computation time between two choices of $r$ is widening with the increasing of the scenario number.

On the other hand, we fix the scenario number at $50$ and observe the influence of the dimension of the decision vector on the performance of Algorithm \ref{alg1}. From Table \ref{table2} and Fig.\ref{fig2}, the growth rates of the iteration number  and the computation time are steady with the increase of the scenario number when $r=1$. However, when $r=\sqrt{n_1+n_2}$, the iteration number remains stable and is around $40$. The computation time also grows very slowly when the scenario number increases. Similarly, the gap between two choices of $r$ is widening with the increasing of the dimension.

In summary, the above numerical experiments show that the elicited PHA is effective for the pseudomonotone two-stage SLCPs. Even for the relatively large cases, the elicited PHA can find the solution in a reasonable amount of time. When it comes to the choice of $r$, the performance of elicited PHA with $r=\sqrt{n_1+n_2}$ is much better than that with $r=1$, at least in our setting.

\begin{remark}
The results in our numerical experiments is basically consistent with the results in \cite[Table 2, 4]{Rockafellar2019}, but is slightly different from the results in \cite[Table 5, 6]{SZ2021}, where the numerical results show that increasing parameter $r$ leads to more iterations and convergence time. Nevertheless, it is worth noting that the settings of the experiments in our paper and \cite{SZ2021} are different, where $s=r/2$ in our paper while $s=r-1$ in \cite{SZ2021}. Actually, parameter $r$ with value $\sqrt{n_1+n_2}$ deserves to be explored further for its possible advantages.
\end{remark}

\section{Conclusions} \label{con}
We studied the multistage pseudomonotone SVI. First, we established some theoretical results on the existence, convexity, boundedness and compactness of the solution set based on constructing  the isomorphism between Hilbert space $\mathcal{L}_n$ and Euclidean space $R^{\bar{n}}$. Second, aiming at extension of the PHA from  monotone SVI problems to  nonmonotone ones,  we presented some sufficient conditions on the elicitability of the pseudomonotone SVIs, which opens the door for applying Rockafellar's elicited (nonmonotone) PHA to solve pseudomonotone SVIs. Numerical results on solving a pseudomonotone two-stage stochastic market optimization problem and experimental results on solving randomly generated two-stage SLCPs showed that the efficiency of the elicited PHA for solving pseudomonotone SVIs.

\bibliographystyle{siamplain}

\end{document}